\numberwithin{equation}{section}
\ifpdf \usepackage[pdftex,pdfstartview=FitH,pdfpagemode=none,colorlinks,bookmarks,linkcolor=blue]{hyperref} \else  \usepackage[hypertex]{hyperref} \fi
\newtheorem{theorem}{Theorem}[section]
\newtheorem{lemma}[theorem]{Lemma}
\newtheorem{corollary}[theorem]{Corollary}
\newtheorem{definition}[theorem]{Definition}
\newtheorem{proposition}[theorem]{Proposition}
\newtheorem{notation}[theorem]{Notation}
\newtheorem{remark}[theorem]{Remark}
\theoremstyle{definition}
\newcommand{\cA}{\mathcal{A}}
\newcommand{\cE}{\mathcal{E}}
\newcommand{\cF}{\mathcal{F}}
\newcommand{\cL}{\mathcal{L}}
\newcommand{\cM}{\mathcal{M}}
\newcommand{\cO}{\mathcal{O}}
\newcommand{\cS}{\mathcal{S}}
\newcommand{\bC}{\mathbb{C}}
\newcommand{\bR}{\mathbb{R}}
\newcommand{\bZ}{\mathbb{Z}}
\newcommand{\bQ}{\mathbb{Q}}
\newcommand{\bN}{\mathbb{N}}
\newcommand{\bT}{\mathbb{T}}
\newcommand{\goz}{\mathfrak{z}}
\newcommand{\gow}{\mathfrak{w}}
\newcommand{\Kb}{{\overline K}}
\newcommand{\Fb}{{\overline F}}
\newcommand{\GL}{\operatorname{GL}}
\newcommand{\re}{\operatorname{Re}}
\newcommand{\im}{\operatorname{Im}}
\newcommand{\rank}{\operatorname{rank}}
\newcommand{\Aut}{\operatorname{Aut}}
\newcommand{\Tr}{\operatorname{Tr}}
\newcommand{\Spec}{\operatorname{Spec}}
\newcommand{\Gal}{\operatorname{Gal}}
\newcommand{\Vol}{\mathrm{Vol}}
\newcommand{\Mah}{\mathrm{Mah}}
\newcommand\diag[1]{\operatorname{diag}\left(#1\right)}
\newcommand{\onto}{\xymatrix{\ar@{>>}[r]&}}
\newcommand{\da}[4]{\xymatrix{#1 \ar@<.5ex>[r]^{#2} \ar@<-.5ex>[r]_{#3} & #4}}
\newcounter{subconst}[subsection]
\newcounter{const}
\newcounter{CONST}
\begin{document}

\title[Remarks on Euclidean minima]{Remarks on Euclidean minima}
\author[U. Shapira]{Uri Shapira}
\address{ETH Z\"{u}rich, 8092 Z\"{u}rich, Switzerland}
\author[Z. Wang]{Zhiren Wang}
\address{Yale University, New Haven, CT 06520, USA}
\setcounter{page}{1}
\thanks{Uri Shapira is partially supported by the Advanced research Grant 228304 from the European Research Council. Zhiren Wang is partially supported by an AMS-Simons travel grant and the NSF Grant DMS-1201453.}

\begin{abstract}
The Euclidean minimum $M(K)$ of a number field $K$ is an important numerical invariant that indicates whether $K$ is norm-Euclidean. When $K$ is a non-CM field of unit rank $2$ or higher, Cerri showed $M(K)$, as the supremum in the Euclidean spectrum $\Spec(K)$, is isolated and attained and can be computed in finite time. We extend Cerri's works by applying recent dynamical results of Lindenstrauss and Wang. In particular, the following facts are proved:\\
(1) For any number field $K$ of unit rank $3$ or higher, $M(K)$ is isolated and attained and Cerri's algorithm computes $M(K)$ in finite time.\\
(2) If $K$ is a non-CM field of unit rank $2$ or higher, then the computational complexity of $M(K)$ is bounded in terms of the degree, discriminant and regulator of $K$.

\end{abstract}
\maketitle
{\small\tableofcontents}

\section{Introduction}

\subsection{Background}

A number field $K$ is said to be norm-Euclidean if its ring of integers $\cO_K$ is a Euclidean domain with respect to the algebraic norm $|N_K(\cdot)|$, that is, for all $x,y\in\cO_K$, there exists $a\in\cO_K$ such that $|N_K(x-ay)|<|N_K(y)|$. The Euclidean minimum of $K$ is a numerical indicator of whether $K$ is norm-Euclidean or not.

\begin{definition} The {\bf Euclidean minimum} of an element $x\in K$ is
$m_K(x)=\inf_{y\in x+\cO_K}|N_K(y)|$.

The {\bf Euclidean spectrum} of the number field $K$ is the image $\{m_K(x):x\in K\}$ and the {\bf Euclidean minimum} of $K$ is
$M(K)=\sup_{x\in K}m_K(x)$.
\end{definition}

It is known that $K$ is norm-Euclidean if $M(K)>1$ and is not norm-Euclidean if $M(K)<1$. When $M(K)=1$, it was proved by Cerri \cite{C06} that if the unit rank of $K$ is at least $2$ then it is not norm-Euclidean.

One can easily check that $m_K(x)\geq 0$ and $M(K)>0$. When $K$ is totally real it is part of a conjecture of Minkowski that $M(K)\leq 2^{-d}\sqrt{D_K}$ where $d$ and $D_K$ denote respectively the degree and discriminant of $K$. The conjecture has been proved only for number fields of low degrees. But in general, weaker upper bounds are available: for totally real fields Chebotarev proved $M(K)\leq 2^{-\frac d2}\sqrt{D_K}$ (see for example \cite{HW79}*{\S 24.9}). For general number fields (not necessarily totally real), Bayer-Fluckiger showed in \cite{B06} that
\begin{equation}\label{EMupper}M(K)\leq 2^{-d}D_K.\end{equation}
In the rest of this paper, we will always write
\begin{equation}\label{normKb}\Kb=K\otimes_\bQ\bR.\end{equation} The notions of Euclidean spectrum and minimum can be extended to $\Kb$. To do this, one needs a natural extension of the algebraic norm $N_K$ to a continuous function $N_\Kb$ on the real vector space $\Kb$, which satisfies
\begin{equation}\label{inhomonorm}N_\Kb(x\otimes s)=s^dN_K(x),\forall x\in K,s\in\bR.\end{equation}
The exact definition of $N_\Kb$ will be given in Definition \ref{normKbdef}.

\begin{definition}
For $x\in\Kb$, we define the {\bf inhomogeneous minimum} of $x$ as
$m_\Kb(x)=\inf_{y\in x+\cO_K}|N_\Kb(y)|$.
The {\bf inhomogeneous spectrum} and {\bf inhomogeneous minimum} of $K$ are respectively\\ $\Spec(\Kb)=\{m_\Kb(x):x\in\Kb\}$ and $M(\Kb)=\sup_{x\in\Kb}m_\Kb(x)$.
\end{definition}

$K$ is a subset of $K\otimes\bR$ by identifying $x$ with $x\otimes1$. Moreover, $N_\Kb$ and $m_\Kb$ coincide respectively with $N_K$ and $m_K$ when restricted to $K$. In consequence, $\Spec(K)\subset\Spec(\Kb)$ and $M(K)\leq M(\Kb)$. Actually the equality between the two minima always holds by the works of Barnes-Swinnerton-Dyer \cite{BS52}, van der Linden \cite{V85} and Cerri \cite{C06}.

\begin{proposition}\label{IEmin}\cite{C06}*{Corollary 1}

$M(\Kb)=M(K)$ for all number fields $K$. If the unit rank of $K$ is at least $2$, then $M(K)\in\bQ$.\end{proposition}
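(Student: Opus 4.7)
The inequality $M(K) \leq M(\Kb)$ is immediate from $K \subset \Kb$ together with the fact that $N_\Kb|_K = N_K$, whence $m_K$ and $m_\Kb$ agree on $K$. The content of the proposition is therefore the reverse inequality and the rationality statement.

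For the equality $M(\Kb) = M(K)$, the plan is to first show that $m_\Kb$ attains its supremum and then transfer the maximizer to $K$. As the pointwise infimum of the continuous functions $x \mapsto |N_\Kb(x-a)|$ for $a \in \cO_K$, the function $m_\Kb$ is upper semicontinuous; it is also $\cO_K$-periodic, so on the compact torus $\Kb/\cO_K$ it attains its supremum $M(\Kb)$ at some $x_0$. The obstacle is that passing to rational $x_n \to x_0$ only gives $\limsup m_K(x_n) \leq m_\Kb(x_0)$ by upper semicontinuity, which is the wrong direction. To obtain a matching lower bound one uses the multiplicative action of $\cO_K^\times$, which preserves $\cO_K$ and $|N_\Kb|$ and hence $m_\Kb$: by Dirichlet's unit theorem, each sublevel set $\{y \in \Kb : |N_\Kb(y)| \leq c\}$ has compact image in $\Kb/\cO_K^\times$. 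This allows us to reduce the infimum defining $m_\Kb(x)$ to a compact parameter region depending continuously on $x$, yielding $m_K(x_n) \to m_\Kb(x_0) = M(\Kb)$ for a well-chosen rational sequence $x_n \to x_0$, and hence $M(K) \geq M(\Kb)$.

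For the rationality of $M(K)$ when the unit rank is at least $2$, the plan is to strengthen the above approximation so as to produce a maximizer that actually lies in $K$. The key additional input is isolation of $M(K)$ at the top of the spectrum $\Spec(K)$: once we know that rational $x_n \to x_0$ satisfy $m_K(x_n) \to M(K)$, isolation forces $m_K(x_n) = M(K)$ for all sufficiently large $n$, so some element of $K$ already realizes the maximum. For such an $x_0 \in K$, the same compactness-modulo-units argument shows that the infimum defining $m_K(x_0)$ is attained by some $y_0 \in x_0 + \cO_K \subset K$, whence $M(K) = |N_K(y_0)| \in N_K(K) \subset \bQ$. The main obstacle here is establishing the isolation property, which is precisely where the higher unit rank enters in an essential way, via the cocompact $\cO_K^\times$-action on the norm-one hypersurface and the ensuing rigidity of orbits on $\Kb/\cO_K$.
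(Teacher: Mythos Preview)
This proposition is quoted from Cerri \cite{C06} and is not proved in the present paper, so there is no ``paper's own proof'' to compare against; what follows evaluates your sketch on its own merits and against Cerri's actual argument.

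Your argument has a genuine gap at the step ``This allows us to reduce the infimum defining $m_\Kb(x)$ to a compact parameter region depending continuously on $x$, yielding $m_K(x_n)\to m_\Kb(x_0)$.'' The reduction to a bounded box (as in Corollary \ref{finiteres}) is correct, but the relevant set is $(U_K.x+\cO_K)\cap B_R$, not $(x+\cO_K)\cap B_R$; the units that bring representatives into $B_R$ are unbounded, so even though $x_n\to x_0$, the translates $u_n(x_n-a_n)$ that realise the infimum need not converge to anything in $x_0+\cO_K$. Concretely, $m_\Kb$ is only upper semicontinuous: when $\rank(U_K)\geq 1$ it vanishes on a dense set of $\Kb/\cO_K$ (for each $i\in I$ the set $\{x:\sigma_i(x)=\sigma_i(a)\text{ for some }a\in\cO_K\}$ is dense and forces $N_\Kb(x-a)=0$), so the lower-semicontinuity you need simply fails in general, and no soft compactness argument of the type you outline can repair it.

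Cerri's proof for unit rank $\geq 2$ (non-CM) goes in the opposite direction: rather than approximating an irrational maximiser by rationals, he shows there is no irrational maximiser at all. By Berend's rigidity (Theorem \ref{Berend} here), every irrational $z\in\Kb/\cO_K$ has dense $U_K$-orbit, hence $0\in\overline{U_K.z}$ and $m_{\Kb/\cO_K}(z)=0$ by Lemma \ref{toralmin}(3). Thus any $z$ with $m_{\Kb/\cO_K}(z)=M(\Kb)>0$ is rational, giving $M(\Kb)=M(K)$ and, via Lemma \ref{toralmin}(4), $M(K)\in\bQ$. Your rationality paragraph also assumes isolation of $M(K)$ in $\Spec(K)$ as input; but isolation is itself one of Cerri's main theorems and is proved \emph{using} the same Berend-type rigidity, so invoking it here is circular. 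The cases of unit rank $0$ and $1$ (and CM fields) are handled by separate, earlier arguments of Barnes--Swinnerton-Dyer and van der Linden, not by the uniform mechanism you propose.
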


It can be shown that for all $x\in K$, \begin{equation}\label{attain}\exists y\in x+\cO_K\text{ such that } |N_\Kb(y)|\leq M(K).\end{equation}

\begin{definition}$M(K)$ is said to be {\bf attained}
in $\Spec(\Kb)$ if (\ref{attain}) is true for all $x\in\Kb$ as well; or, equivalently, whenever $x\in\Kb$ satisfies $m_\Kb(x)=M(K)$, there exists $y\in x+\cO_K$ such that $|N_\Kb(y)|$ is exactly $M(K)$.\end{definition}

It should be noted that the term ``attained" is defined in a slightly confusing sense. It doesn't refer to whether or not the supremum is achieved in the sup-inf expression \begin{equation}M(K)=M(\Kb)=\sup_{x\in\Kb}\inf_{y\in x+\cO_K}|N_\Kb(y)|.\end{equation} Instead, it means the infimum is attained at every point $x$ that achieves the supremum.

For more information on Euclidean minima and spectra, we refer the reader to Lemmermeyer's survey \cite{L95}.

A few natural questions one can ask about $M(K)$ are:
\begin{enumerate}
\item\label{q1} Is $M(K)\in \Spec(K)$?
\item\label{q2} If $M(K)\in \Spec(K)$, is it an isolated point in $\Spec(K)$?
\item\label{q3} Is $M(K)$ attained in the sense defined above?
\item\label{q4} Is $\Spec(K)$ equal to $\Spec(\Kb)$? Can one provide concrete description of these spectra?
\item\label{q5} Is $M(K)$ algorithmically computable?
\item\label{q6}  Can one bound the computational complexity of $M(K)$?
\end{enumerate}
We will see that questions~\eqref{q1}--\eqref{q4} are very much related and a complete answer to question~\eqref{q4} usually allows one to answer the preceding ones.

In relation to questions~\eqref{q1}--\eqref{q4}, when $K$ is a non-CM field of unit rank at least $2$, Cerri  proved in \cite{C06} that $M(K)$ is isolated and attained in $\Spec(\Kb)$. In fact, $\Spec(K)$ and $\Spec(\Kb)$ were shown to be equal and a complete characterization of the spectra was obtained. Namely, the non-zero part of $\Spec(K)$ is a decreasing sequence of rational numbers that converge to $0$. Furthermore, Cerri showed that the preimage $\{x\in\Kb:m_\Kb(x)=M(K)\}$ is a non-empty subset of $K$ and is the union of finitely many residue classes modulo $\cO_K$.

In relation to question~\eqref{q5},
computations in many fields of low degree are listed in \cite{L95}.
By developing ideas introduced in works of Barnes and Swinnerton-Dyer \cite{BS52} and Cavallar and Lemmmermeyer \cite{CL98}, which focused respectively on quadratic and cubic fields, Cerri \cites{CThesis,C07} gave an algorithm that computes $M(K)$. In case that $K$ is not a CM number field and has unit rank $2$ or higher, Cerri showed that the algorithm always terminates in a finite number of steps. However, it is unknown whether the algorithm works in general. Moreover, there was no bound on the number of steps required before the algorithm stops, i.e. the computational complexity of $M(K)$.

An important ingredient in the works \cites{CThesis,C06,C07} of Cerri was the application of a result from dynamical systems by Berend to the natural multiplicative action by the group of units $U_K$ on $\Kb/\cO_K$. In \cite{B83}, Berend established the rigidity of higher-rank irreducible commutative actions by toral automorphisms, showing that any orbit is finite or dense. The special form needed for the application in this case will be stated in Theorem \ref{Berend}. The higher-rank and irreducibility conditions in Berend's theorem correspond respectively to Cerri's assumptions that the unit rank is at least $2$ and the number field is non-CM.

\subsection{Statement of main results}

In this paper, we strengthen and complement Cerri's results in two different ways based on two recent extensions to Berend's theorem.

In contrast to the irreducible actions studied in Theorem \ref{Berend}, Lindenstrauss and Wang investigated in \cite{LW10} a special case of reducible actions by commuting toral automorphisms. Namely, given a totally irreducible Cartan action by automorphisms on the $d$-dimensional torus $\bT^d$, the meaning of which will be specified later in the paper, one can consider the diagonal action on $(\bT^d)^2$. When the rank of the action is at least $3$, orbit closures under the diagonal action are classified in \cite{LW10}. When $K$ is a CM field, the action by $U_K$ on $\Kb/\cO_K$ is, up to a finite lifting, such a diagonal action (see Diagram \ref{commdiag}).

Using the classification above, when $K$ is CM and $\rank(U_K)\geq 3$, the main result of the present paper, Theorem \ref{describespec}, allows us to complement Cerri's work  and
answer questions~\eqref{q1}--\eqref{q5}. It gives a complete description of the inhomogeneous and Euclidean spectra, which will actually be proved to be equal and contain only rational numbers. In contrast to the non-CM case, the spectra have an infinity of accumulation points. In particular, the following corollaries follow from Theorem \ref{describespec}. The following corollary positively answers questions~\eqref{q1}--\eqref{q4}.

\begin{corollary}\label{isolatta} Suppose $K$ is a CM number field of unit rank $3$ or higher. Then $M(K)$ is attained and isolated in $\Spec(\Kb)$; moreover, the set $\{z\in\Kb: m_\Kb(z)=M(K)\}$ is contained in $K$ and is the union of finitely many residue classes modulo $\cO_K$.
\end{corollary}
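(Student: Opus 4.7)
The plan is to deduce Corollary \ref{isolatta} from the main description result, Theorem \ref{describespec}, which in the CM setting of unit rank at least $3$ is expected to give the explicit structure of $\Spec(\Kb)$ and, for each value $c$ in the spectrum, identify the corresponding preimage $\{z\in\Kb:m_\Kb(z)=c\}$. Although in the CM case the full spectrum is known to admit infinitely many accumulation points, Theorem \ref{describespec} should describe $M(K)$ itself as the top of an isolated family of rational values in $\Spec(\Kb)$, each with a rationally structured preimage.

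The underlying mechanism parallels Cerri's argument for the non-CM case, with Berend's rigidity theorem replaced by the Lindenstrauss--Wang classification. Viewed as a function on the compact group $\Kb/\cO_K$, $m_\Kb$ is upper semi-continuous and invariant under the multiplicative action of $U_K$; consequently the level set $\{z+\cO_K\in\Kb/\cO_K:m_\Kb(z)=M(K)\}$ is closed, $U_K$-invariant, and disjoint from the origin because $M(K)>0$. Any $U_K$-orbit closure lying inside this level set is therefore a proper closed invariant subset of $\Kb/\cO_K$. Since $K$ is CM of unit rank $\geq 3$, the $U_K$-action on $\Kb/\cO_K$ lifts, via a finite covering, to the diagonal action of a totally irreducible rank-$\geq 3$ Cartan action on $(\bT^d)^2$, and \cite{LW10} then forces every such orbit closure to be a finite union of rational invariant affine subtori. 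It follows that $\{z\in\Kb:m_\Kb(z)=M(K)\}\subset K$ and is a finite union of $\cO_K$-cosets; attainment of $M(K)$ then follows because on each such coset the infimum defining $m_\Kb(z)$ is a minimum taken over a discrete lattice, and this minimum is achieved.

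For the isolation of $M(K)$ in $\Spec(\Kb)$, I would argue by contradiction: a sequence $z_n\in\Kb$ with $m_\Kb(z_n)<M(K)$ and $m_\Kb(z_n)\to M(K)$ would, after passing to a subsequence converging modulo $\cO_K$, yield a limit $z$ with $m_\Kb(z)=M(K)$ by upper semi-continuity; the rational rigidity of the top level set established above, together with the explicit structural picture of the spectrum given by Theorem \ref{describespec}, would then force $m_\Kb(z_n)=M(K)$ eventually, contradicting the strict inequality. The main obstacle lies not in this extraction but in Theorem \ref{describespec} itself: one must translate the Lindenstrauss--Wang classification of orbit closures on $(\bT^d)^2$ into a sharp enumeration of the allowed values of $m_\Kb$ near the top of the spectrum, and this arithmetic--dynamical translation is the technical heart of the paper. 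Once it is in place, Corollary \ref{isolatta} is a short reading-off.
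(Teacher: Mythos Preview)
Your overall framing is correct and matches the paper: once Theorem~\ref{describespec} is in hand, Corollary~\ref{isolatta} is indeed a short reading-off. The paper simply identifies $M(K)$ with the value $\mu_{1,1}$ in the spectrum description; isolation and the finiteness and rationality of the top level set are then immediate from parts (1) and (2) of the theorem, and attainedness follows from Lemma~\ref{toralmin}.(4), exactly as you say.

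That said, your second paragraph has a genuine gap if read as a standalone argument rather than a heuristic. The Lindenstrauss--Wang classification (Proposition~\ref{joining}) tells you that any orbit closure inside the top level set is either a finite rational orbit or an $s$-dimensional homogeneous $G$-invariant subset $L$. You then write ``It follows that $\{z\in\Kb:m_\Kb(z)=M(K)\}\subset K$,'' but this does not follow: such an $L$ is full of irrational points, and nothing in the orbit-closure classification alone excludes an irrational $z\in L$ with $m_\Kb(z)=M(K)$. Ruling this out is precisely the content of Proposition~\ref{subspec}, which shows that every irrational point of $L$ takes the value $\nu_L=\min\Spec_{\Kb/\cO_K}(L)$, strictly below the maximum of $m_{\Kb/\cO_K}$ on $L$. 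This is the arithmetic--dynamical step you correctly flag as the heart of Theorem~\ref{describespec}; your sketch just elides it at the wrong moment.

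Your isolation-by-contradiction argument is also more than is needed and does not close on its own: upper semicontinuity gives a rational limit $z$ with $m_\Kb(z)=M(K)$, but rationality of $z$ does not force $m_\Kb(z_n)=M(K)$ eventually for a sequence $z_n\to z$. The paper bypasses this entirely by reading isolation directly from the strict inequality $\mu_{1,1}>\mu_{1,2}$ in Theorem~\ref{describespec}.(1).
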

In relation to question~\eqref{q5} we prove

\begin{corollary}\label{computable} If $K$ is a CM number field of unit rank $3$ or higher, then Cerri's algorithm computes $M(K)$ in a finite number of steps.
\end{corollary}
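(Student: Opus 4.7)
The plan is to reduce the statement directly to the termination argument of Cerri in \cites{CThesis,C07}, which was carried out in the non-CM, unit-rank-$\geq 2$ setting. In those works, the finite-time termination of the algorithm rests on three structural properties at the top of the Euclidean spectrum: (i) $M(K)\in\Spec(\Kb)$ and is attained, (ii) $M(K)$ is isolated in $\Spec(\Kb)$, and (iii) the set $\{z\in\Kb:m_\Kb(z)=M(K)\}$ is contained in $K$ and is a finite union of residue classes modulo $\cO_K$. An additional useful input is that $M(K)\in\bQ$, which permits exact arithmetic in the termination check.

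For CM fields of unit rank at least $3$, properties (i)--(iii) are provided exactly by Corollary \ref{isolatta}, while the rationality $M(K)\in\bQ$ will follow from Theorem \ref{describespec}. Once these facts are in hand, I would simply transport Cerri's argument: the algorithm enumerates candidate maxima from above and, thanks to the isolation in (ii), there is a positive gap below $M(K)$ in $\Spec(\Kb)$; thanks to (iii), all classes realizing $M(K)$ can be exhibited in finitely many search rounds; and thanks to (i) together with the rationality, the exhibited candidate can be certified to equal $M(K)$ in finitely many exact comparisons. Since nothing in Cerri's procedure uses irreducibility of the $U_K$-action beyond what is needed to secure (i)--(iii), the verbatim replay goes through.

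The main conceptual work therefore lies upstream, in Theorem \ref{describespec} rather than in the algorithmic step. In the CM case the $U_K$-action on $\Kb/\cO_K$ is reducible, so Berend's theorem does not suffice; instead one invokes the classification of orbit closures for the diagonal Cartan action established in \cite{LW10}, which is precisely what forces the unit rank hypothesis $\geq 3$ rather than $\geq 2$. Granting Theorem \ref{describespec}, the present corollary becomes a direct translation of Cerri's algorithmic analysis into the CM setting, and the hard part of the program is not the algorithmic step itself but the production of the structural properties (i)--(iii) on which that translation depends.
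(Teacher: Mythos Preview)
Your proposal is correct and follows essentially the same approach as the paper: both argue that Cerri's termination proof is modular, relying only on structural properties at the top of $\Spec(\Kb)$ (isolation of $M(K)$ and finiteness/rationality of the locus where it is achieved), and that these are supplied by Corollary~\ref{isolatta}. The paper's version is slightly more parsimonious, citing only isolation and the finiteness of rational points in $\{z:m_{\Kb/\cO_K}(z)=M(K)\}$ as the inputs Cerri actually uses, whereas you also list attainment and rationality of $M(K)$; but this is harmless over-specification, not a different argument.
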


Combined with Cerri's results \cites{CThesis,C06,C07} regarding non-CM fields, it follows that the properties in Corollaries \ref{isolatta} and \ref{computable} hold, or equivalently, questions \eqref{q1}--\eqref{q5} have affirmative answers, for any number field that has unit rank $3$ or higher, and in particular, for all fields of degree $7$ or higher.

\begin{theorem}Assume $K$ is a number field of unit rank $3$ or higher. Then $M(K)$ is attained and isolated in $\Spec(\Kb)$, and is computable in finite time.\end{theorem}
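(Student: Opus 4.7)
The plan is to split the argument according to whether $K$ is a CM field. In both cases the conclusion becomes a direct consequence of results already available to us in the paper.

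If $K$ is non-CM, then $\rank(U_K)\geq 3$ in particular gives $\rank(U_K)\geq 2$, so Cerri's work \cites{CThesis,C06,C07} applies. It tells us that $\Spec(K)=\Spec(\Kb)$, that the nonzero part of this set is a strictly decreasing sequence of rationals accumulating only at $0$, and that Cerri's algorithm computes $M(K)$ in a finite number of steps. Consequently $M(K)$ is the maximum of the spectrum, is isolated in $\Spec(\Kb)$, and is attained.

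If $K$ is CM, then the hypothesis $\rank(U_K)\geq 3$ is exactly what is needed to invoke Corollaries \ref{isolatta} and \ref{computable}, whose conclusions are precisely isolation, attainment, and finite-time computability of $M(K)$.

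The substance of the paper lies not in this case-split assembly but in the CM branch, and ultimately in Theorem \ref{describespec}. The key observation there is that when $K$ is CM the natural multiplicative $U_K$-action on $\Kb/\cO_K$ is, up to a finite lift, the diagonal action of a totally irreducible Cartan action on two copies of $\bT^d$, which brings the Lindenstrauss--Wang orbit-closure classification \cite{LW10} into play. The main obstacle is to translate that classification into a concrete description of $\Spec(\Kb)$ and, in particular, to verify that the set $\{z\in\Kb: m_\Kb(z)=M(K)\}$ is a finite union of $\cO_K$-cosets contained in $K$; once this is in hand, isolation, attainment, and the termination of Cerri's algorithm all follow, and the theorem reduces to the two-line case split above.
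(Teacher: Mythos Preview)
Your proposal is correct and matches the paper's own treatment: the theorem is stated immediately after Corollaries \ref{isolatta} and \ref{computable} as the evident combination of Cerri's results \cites{CThesis,C06,C07} for the non-CM case with those corollaries for the CM case, exactly the case split you give. The additional paragraph explaining where the real work lies is accurate commentary but not part of the proof proper.
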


Finally, in relation to question~\eqref{q6}, in \cite{W10}, an effective version of Theorem \ref{Berend} was obtained by generalizing methods from  Bourgain, Lindenstrauss, Michel and Venkatesh's one-dimensional study \cite{BLMV09}. Using this result and following Cerri's strategies, we will give in Section~\ref{seccomplexity} an estimate of the number of possible residue classes in $K$ modulo $\cO_K$ on which $m_K$ may possibly achieve the maximum $M(K)$ when $K$ is non-CM and $U_K$ has at least rank $2$. This yields the following upper bound to the computational complexity of $M(K)$.

\begin{theorem}\label{complexity}
If $K$ is a non-CM number field whose unit rank is greater than or equal to 2, then the computational complexity of $M(K)$ is bounded by $\exp\exp\exp(D_K^{C\cF_{U_K}^2})$ where the constant $C$ depends only on the degree of $K$.
\end{theorem}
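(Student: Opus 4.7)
The plan is to make Cerri's computation of $M(K)$ effective at every stage, substituting the effective rigidity result of \cite{W10} wherever Cerri invoked Berend's theorem \cite{B83}. Cerri's argument in \cites{CThesis,C06,C07} proceeds in two phases: first, he uses Berend's rigidity (Theorem \ref{Berend}) to show that the set $E=\{x\in\Kb : m_\Kb(x)=M(K)\}$ is, modulo $\cO_K$, a finite union of residue classes of rational points of $K$; second, he gives an explicit search procedure that eventually detects all of them. The qualitative ``eventually'' becomes quantitative once the first step is made effective, which is precisely what \cite{W10} supplies.

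First I would bound the extremal residue classes quantitatively. The effective Berend theorem of \cite{W10} provides an explicit $\veps_0=\veps_0(d, D_K, \cF_{U_K})$ such that any $U_K$-invariant closed proper subset of $\Kb/\cO_K$ is either $\veps_0$-dense or consists of at most $N_0(d, D_K, \cF_{U_K})$ rational points of bounded denominator. Applied to the image of $E$ in $\Kb/\cO_K$, which Cerri showed not to be dense, this yields explicit upper bounds on both the number of extremal residue classes modulo $\cO_K$ and the denominators of their representatives.

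Next, I would plug these explicit bounds into Cerri's search. Cerri's algorithm subdivides a fundamental domain of $\Kb/\cO_K$ into small boxes and, at each step, discards those on which $|N_\Kb|$ is provably below the current candidate for $M(K)$. The number of refinements needed before termination is controlled by how finely one must discriminate between the extremal points --- whose positions are now quantitatively constrained --- and the rest, which in turn is controlled by $\veps_0^{-1}$ and the denominator bound. A direct volume estimate then translates these quantities into a bound on the total number of boxes examined, hence on the overall running time of the algorithm.

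The main obstacle is the composition of exponents. To obtain the precise bound $\exp\exp\exp(D_K^{C\cF_{U_K}^2})$ one has to track how the effective Berend bound --- which itself scales roughly like a double exponential in $D_K$ raised to a power involving $\cF_{U_K}^2$ --- combines with the exponential cost of subdividing a $(d-1)$-dimensional fundamental domain at resolution $\veps_0$. Composing these produces the third exponential. The bookkeeping must ensure that the constant $C$ depends only on $d$ and not on other invariants of $K$, which requires a careful reading of both \cite{W10} and Cerri's papers in order to isolate the explicit dependence of all implicit constants.
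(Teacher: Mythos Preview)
Your outline has the right spirit---use the effective rigidity from \cite{W10} to make everything quantitative---but it diverges from the paper's proof in two substantive ways and contains one genuine misstep.

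\textbf{The paper does not run Cerri's algorithm.} Rather than bounding the termination time of Cerri's box-subdivision procedure, the paper bypasses it entirely with a direct enumeration. The effective Berend statement (Proposition \ref{effBerend}) is applied not to the invariant closed set $E$, but pointwise to torsion points: if a rational point of $\Kb/\cO_K$ has order at least $Q=\exp\exp\exp(D_K^{C\cF_{U_K}^2})$, its $U_K$-orbit is already so dense that $m_{\Kb/\cO_K}$ at that point drops below $2^{-d}$. Since $M(K)\geq m_K(\tfrac12)\geq 2^{-d}$, such points cannot realize $M(K)$. Combined with Corollary \ref{finiteres} (reduction to the box $B_R$ with $R=D_K^{1/d}e^{r\cF_{U_K}/2}$), this gives
\[
M(K)=\max_{x\in\bigcup_{2\le q<Q} q^{-1}\cO_K}\ \min_{x'\in(U_K.x+\cO_K)\cap B_R}|N_K(x')|,
\]
and one then simply counts the lattice points $q^{-1}\cO_K\cap B_R$ for $q<Q$ and computes their norms. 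The count is polynomial in $Q$, hence still a triple exponential after enlarging $C$.

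\textbf{Your description of effective Berend is off in a way that matters.} The result in \cite{W10} (Proposition \ref{effBerend} here) concerns orbits of \emph{individual torsion points} of large order, not arbitrary invariant closed proper subsets. Your formulation---``any $U_K$-invariant closed proper subset is either $\veps_0$-dense or consists of at most $N_0$ rational points''---is the qualitative Berend conclusion, and no such effective dichotomy for general closed sets is available. To apply the effective statement to $E$ you would first need to know $E$ consists of rational points, which is exactly what you are trying to establish effectively. The paper sidesteps this by never touching $E$ at all: it bounds denominators of \emph{any} rational point that could achieve $M(K)$, using only the a priori lower bound $M(K)\geq 2^{-d}$.

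\textbf{The source of the triple exponential is misidentified.} You attribute the third exponential to the composition of a double-exponential effective Berend with an exponential subdivision cost. In fact the threshold in Proposition \ref{effBerend} is \emph{already} a triple exponential: one needs $q\geq\exp\exp\exp\big(\max(C\cM_\psi^{30d},\cF_{U_K}^{C\cF_{U_K}^2})\big)$ before the density conclusion kicks in. The subsequent enumeration is only polynomial in $Q$ and does not add another exponential layer. If you actually carried out your plan as written---a double-exponential denominator bound followed by an exponential-cost subdivision---you would either undercount or, depending on how the subdivision resolution is set, overshoot to a quadruple exponential.

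Your route via Cerri's algorithm could perhaps be made to work with the correct form of effective Berend and corrected bookkeeping, but it is strictly more complicated than the paper's direct enumeration and offers no visible advantage.
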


Here $D_K$ is the discriminant of the field and $\cF_{U_K}$ is a number that measures the sizes of a set of fundamental units in $K$, for the exact definition, see \S\ref{subsecbounded}. In particular, $\cF_{U_K}=O_d(R_K)$ where $d$ and $R_K$ are respectively the degree and regulator of $K$. So Theorem \ref{complexity} says the computational complexity of the Euclidean minimum is bounded in terms of the degree, discriminant and regulator of the number field.

Due to the fact that the orbit closure classification from \cite{LW10} is ineffective, such a complexity estimate is currently unavailable for CM fields. Any quantitative version of that result (which is
stated in the present paper as Proposition \ref{joining}) would lead to some kind of bound on the computational complexity of $M(K)$, $K$ being CM with unit rank $3$ or higher.

\section{Generalities}\label{general}

\subsection{Notations in number fields}

From now on let $K$ be a number field with $r_1$ real embeddings $\sigma_1,\cdots,\sigma_{r_1}$ and $r_2$ pairs of conjugate imaginary embeddings
 $(\sigma_{r_1+1},\sigma_{r_1+r_2+1})$, $\cdots$, $(\sigma_{r_1+r_2},\sigma_{r_1+2r_2})$, where $\sigma_{r_1+r_2+j}=\overline{\sigma_{r_1+j}}$ for $1\leq j\leq r_2$.
Then the degree of $K$ is $d=r_1+2r_2$.

Denote by $\cO_K$ the ring of integers in $K$ and by $U_K=\cO_K^*$ the group of units. Let $r=\rank(U_K)$ denote the unit rank of $K$, which equals $r_1+r_2-1$ by Dirichlet's Unit Theorem, in the rest of paper.

We denote hereafter $I=\{1,2,\cdots,r_1+r_2\}$, $d_i=1$ for $1\leq i\leq r_2$, and $d_i=2$ for $r_1+1\leq i\leq r_1+r_2$.

Recall that the algebraic norm of $\theta\in K$ is  $N_K(\theta)=\prod_{i=1}^d\sigma_i(\theta)$, in particular,
\begin{equation}\label{normdef}|N_K(\theta)|=\prod_{i=1}^d|\sigma_i(\theta)|=\prod_{i=1}^{r_1}|\sigma_i(\theta)|\cdot
\prod_{i=r_1+1}^{r_1+r_2}|\sigma_i(\theta)|^2=\prod_{i\in I}|\sigma_i(\theta)|^{d_i}.\end{equation}

$K$ can embedded into $\bR^{r_1}\oplus\bC^{r_2}$ by the canonical map
\begin{equation}\sigma:\theta\mapsto\big(\sigma_1(\theta),\cdots,\sigma_{r_1}(\theta),\sigma_{r_1+1}(\theta),\cdots,\sigma_{r_1+r_2}(\theta)\big),\end{equation}
and it is well known that $\sigma(\cO_K)$ is a cocompact lattice in $\bR^{r_1}\oplus\bC^{r_2}\cong\bR^d$. Therefore as $K=\cO_K\otimes_\bZ\bQ$, one can identify $K\otimes_\bQ\bR$
 with $\bR^{r_1}\oplus\bC^{r_2}$ via $\sigma$.

Actually, if $x=\theta\otimes s$ where $\theta\in K$, $s\in\bR$, then we can denote \begin{equation}\label{sigmaiKbar}\sigma_i(x)=s\sigma_i(\theta), \forall i=1,\cdots,d\end{equation} and again let \begin{equation}\label{sigmaKbar}\sigma(x)=\big(\sigma_1(x),\cdots,\sigma_{r_1}(x),\sigma_{r_1+1}(x),\cdots,\sigma_{r_1+r_2}(x)\big).\end{equation} Then $\sigma$ is an isomorphism between $K\otimes_\bQ\bR$ and $\bR^{r_1}\oplus\bC^{r_2}$.

Notice $K\subset K\otimes_\bQ\bR$ and that if $x\in K$ then the expressions (\ref{sigmaiKbar}) and (\ref{sigmaKbar}) agree with previous definitions.

Throughout this paper, we write \begin{equation}\Kb=K\otimes_\bQ\bR,\end{equation} identify it with $\bR^{r_1}\oplus\bC^{r_2}$ via the isomorphism $\sigma$, and equip it with the Euclidean distance from $\bR^{r_1}\oplus\bC^{r_2}$.

Each element $x\in\Kb$ will be represented as
\begin{equation}\label{Kbcoord}(x_i)_{i\in I}: x_i\in\bR\text{ if }1\leq i\leq r_1, x_i\in\bC\text{ if }r_1+1\leq i\leq r_1+r_2.\end{equation} And the distance between two points $x$ and $x'$ is $\|x-x'\|$ where \begin{equation}\|x\|=\big(\sum_{i\in I}|x_i|^2\big)^\frac12.\end{equation}

For all $R>0$, define a box-shaped compact subset \begin{equation}\label{box}B_R=\{x\in\Kb,|x_i|\leq R,\forall i\in I\}.\end{equation}

A point $x\in\Kb$ is said to be {\bf rational} if it is in $K$.

\begin{notation}In light of the identification above, from now on we will simply write $x_i$ for $\sigma_i(x)$ for all $x\in K$ and $i\in I$.\end{notation}

\begin{definition}\label{normKbdef} The algebraic norm on $\Kb$ is
\begin{equation}N_\Kb(x)=\prod_{i=1}^{r_1}x_i\prod_{i=r_1+1}^{r_1+r_2}|x_i|^2.\end{equation}
\end{definition}
Clearly $N_\Kb$ is a continuous function on $\Kb$ and satisfies~\eqref{inhomonorm}

Observe that $K$ acts naturally on $\Kb$ by multiplication: \begin{equation}\label{Kbarmulti}\theta'\cdot(\theta\otimes s)=\theta'\theta\otimes s, \forall\theta,\theta'\in K, \forall s\in\bR.\end{equation} In terms of the coordinate system (\ref{Kbcoord}), this multiplication writes: \begin{equation}\label{Kbarmulticoord}(\theta x)_i=\theta_ix_i,\forall i\in I.\end{equation}
It follows directly from definitions that
\begin{equation}\label{Normnatural}N_\Kb(xy)=N_K(x)N_\Kb(y), \forall x\in K,\forall y\in \Kb.\end{equation}

Define the logarithmic embedding map $\cL:U_K\mapsto\bR^I$ by \begin{equation}\label{Logemb}\cL(u)=\big(\log|u_i|\big)_{i\in I}.\end{equation} Then $\cL$ is a group morphism, and by Dirichlet's Unit Theorem its image $\cL(U_K)$ is a cocompact lattice in the subspace \begin{equation}\label{Logembspace}W=\big\{(w_i)_{i\in I}:\sum_{i\in I}d_iw_i=0\big\}.\end{equation}

The size of units from $U_K$ can be measured in terms of the following norm on $W$:
\begin{equation}\label{Wnorm}h_0(w)=\frac12\sum_{i\in I}d_i|w_i|=\sum_{\substack{i\in I\\w_i\geq 0}}d_iw_i=-\sum_{\substack{i\in I\\w_i\leq 0}}d_iw_i, \forall w\in W.\end{equation} The composition map $h^\Mah=h_0\circ\cL$ which maps $U_K$ to $[0,\infty)$ is called the {\bf logarithmic Mahler measure} on $U_K$. Then as $h_0(w)=0$  only if $w=0$, $h^\Mah(u)=0$ if and only if $u$ is a root of unity.

\begin{definition}$K$ is a {\bf CM}-number field if it satisfies one of the following equivalent conditions:

\begin{enumerate}[(i)]
\item $K$ is a totally complex quadratic extension of some totally real number field $F$;
\item There is a proper subfield $F$ such that $\rank(U_F)=\rank(U_K)$;
\end{enumerate}
\end{definition}

To see the conditions are actually equivalent, see for instance \cite{P75}.

\begin{remark}\label{CMconj}A CM number field $K$ has a natural complex conjugation $x\mapsto\bar x$ that is an automorphism and acts as the conjugation in $\bC$ in all embeddings of $K$. Moreover, the extension $K/F$ is normal and $\Gal(K/F)$ consists of the identity map and the complex conjugation (see \cite{W97}*{p39}).\end{remark}

\subsection{Relation with the group of units}

$m_\Kb(x)$ and $m_K(x)$ depend only on the residue class of $x$ modulo $\cO_K$. Hence we project them to the quotient $\Kb/\cO_K$:

\begin{definition}For $z\in\Kb/\cO_K$, let \begin{equation}m_{\Kb/\cO_K}(z)=\inf_{x\in\pi_{\cO_K}^{-1}(z)}|N_\Kb(x)|.\end{equation}\end{definition}

Here and in the sequel $\pi_\Gamma$ denotes the natural projection from $\Kb$ to $\Kb/\Gamma$ for any lattice $\Gamma\subset\Kb$. Obviously
\begin{equation}\label{mproj} m_\Kb=m_{\Kb/\cO_K}\circ\pi_{\cO_K}.\end{equation} Therefore \begin{equation}\label{KbSpec}\Spec(\Kb)=\{m_{\Kb/\cO_K}(z):z\in\Kb/\cO_K\};\end{equation} \begin{equation}\label{KSpec}\Spec(K)=\{m_{\Kb/\cO_K}(z):z\in K/\cO_K\};\end{equation} and by Proposition \ref{IEmin},
\begin{equation}M(K)=M(\Kb)=\sup_{z\in\Kb/\cO_K}m_{\Kb/\cO_K}(z)=\sup_{z\in K/\cO_K}m_{\Kb/\cO_K}(z).\end{equation} In particular, the function $m_{\Kb/\cO_K}$ is bounded by the expression (\ref{EMupper}).

$\Kb/\cO_K$ is a compact abelian group isomorphic to $\bT^d=\bR^d/\bZ^d$. Moreover, we equip $\Kb/\cO_K$ with the distance projected from $\Kb$ and denote it indifferently by $\|\cdot\|$, which makes $\Kb/\cO_K$ a locally Euclidean metric space. The volume of $\Kb/\cO_K$ is $\sqrt{D_K}$.

Moreover, suppose $z\in\Kb/\cO_K$ and $x\in\pi_{\cO_K}^{-1}(z)$, then $z$ is a torsion point if and only if $x$ is rational, in which case we say $z$ is rational as well.

Under the multiplicative action (\ref{Kbarmulti}), the group of units $U_K=\cO_K^*$ preserves the cocompact lattice $\cO_K$ in $\Kb$.

From now on let $G$ be a finite-index subgroup of $U_K$. The multiplication (\ref{Kbarmulti}) induces an action of $G$ on the compact quotient $\Kb/\cO_K$:
\begin{equation}\label{quotmulti}u.(x+\cO_K)=u.x+\cO_K, \forall u\in G, x\in\Kb.\end{equation}

The multiplication by $u$ on $\Kb/\cO_K$ is the identity map if and only if its lift on $\Kb$, which is given by (\ref{Kbarmulti}), is the identity. This happens only when $u=1$. Therefore the induced $G$-action is faithful.

Furthermore, the multiplication (\ref{quotmulti}) is continuous on $\Kb/\Gamma$ and preserves the additive structure, hence is actually an automorphism of the compact abelian group $\Kb/\cO_K$.

For all $z\in\Kb/\cO_K$ denote by $G.z=\{uz:u\in G\}$ the $G$-orbit of $z$ and by $\overline{G.z}$ the orbit closure.

\begin{lemma}\label{toralmin}Suppose $G$ is a finite-index subgroup of $U_K$ . Then,
\begin{enumerate}
\item For all $z,z'\in\Kb/\cO_K$, if $z'\in G.z$, then $m_{\Kb/\cO_K}(z')=m_{\Kb/\cO_K}(z)$;
\item The function $m_{\Kb/\cO_K}$ is upper semicontinuous on $\Kb/\cO_K$, that is, if $\lim_{n\rightarrow\infty}z_n=z$ then $\limsup_{n\rightarrow\infty}m_{\Kb/\cO_K}(z_n)\leq m_{\Kb/\cO_K}(z)$;
\item For all $z\in\Kb/\cO_K$, $$m_{\Kb/\cO_K}(z)=\min_{z'\in\overline{G.z}}m_{\Kb/\cO_K}(z')\leq \inf\{\|z'\|^d:z'\in\overline{G.z}\};$$
\item If $z$ in $K/\cO_K$, then $m_{\Kb/\cO_K}(z)$ lies in $\bQ$ and is attained by $|N_K(x)|$ at some $x\in\pi_{\cO_K}^{-1}(z)$.
\end{enumerate}
\end{lemma}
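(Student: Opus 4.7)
The four assertions can be proved in order, with each building on the previous ones. The key ingredients are: (a) units have algebraic norm $\pm 1$; (b) $N_\Kb$ is continuous on $\Kb$ and multiplicative in the sense of~(\ref{Normnatural}); (c) for $z \in K/\cO_K$, the values of $|N_K|$ on $\pi_{\cO_K}^{-1}(z)$ lie in a discrete subset of $\bQ$. No step poses a substantial obstacle; the most delicate point is that in part~(2), upper semicontinuity must be proved by choosing local lifts, which is legitimate because $\pi_{\cO_K} : \Kb \to \Kb/\cO_K$ is a local homeomorphism.

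For (1), if $z' = u.z$ with $u \in G \subseteq U_K$, then $u \cdot \cO_K = \cO_K$, so $\pi_{\cO_K}^{-1}(z') = u \cdot \pi_{\cO_K}^{-1}(z)$. By~(\ref{Normnatural}) and the fact that $|N_K(u)| = 1$, one gets $|N_\Kb(u.y)| = |N_\Kb(y)|$ for every $y \in \pi_{\cO_K}^{-1}(z)$; taking the infimum over $y$ yields the claimed equality. For (2), given $z_n \to z$, I would fix an arbitrary lift $x \in \pi_{\cO_K}^{-1}(z)$, and use that $\pi_{\cO_K}$ is a local homeomorphism to pick lifts $x_n \in \pi_{\cO_K}^{-1}(z_n)$ with $x_n \to x$; continuity of $N_\Kb$ then gives $m_{\Kb/\cO_K}(z_n) \leq |N_\Kb(x_n)| \to |N_\Kb(x)|$, and infimizing over $x$ yields $\limsup m_{\Kb/\cO_K}(z_n) \leq m_{\Kb/\cO_K}(z)$.

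Assertion~(3) then combines~(1) and~(2). For any $z' \in \overline{G.z}$, pick $u_n \in G$ with $u_n.z \to z'$; part~(2) and the constancy from~(1) give
\[
m_{\Kb/\cO_K}(z') \geq \limsup_{n \to \infty} m_{\Kb/\cO_K}(u_n.z) = m_{\Kb/\cO_K}(z),
\]
and since $z \in \overline{G.z}$ realizes this lower bound (with $1 \in G$), the minimum over $\overline{G.z}$ equals $m_{\Kb/\cO_K}(z)$. For the second inequality, for each $z' \in \overline{G.z}$ pick a lift $x' \in \pi_{\cO_K}^{-1}(z')$ realizing the distance $\|z'\|$ from $0$; then $|x'_i| \leq \|x'\|$ for each $i \in I$ and $\sum_{i \in I} d_i = d$, so
\[
|N_\Kb(x')| = \prod_{i \in I} |x'_i|^{d_i} \leq \|x'\|^{d} = \|z'\|^{d},
\]
giving $m_{\Kb/\cO_K}(z) \leq m_{\Kb/\cO_K}(z') \leq \|z'\|^d$ and therefore the claimed bound after infimizing over $z'$.

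For (4), if $z \in K/\cO_K$, choose a lift $x_0 \in K$ and write $x_0 = q^{-1}\beta$ with $\beta \in \cO_K$ and $q \in \bZ_{>0}$. Every element of $\pi_{\cO_K}^{-1}(z) = x_0 + \cO_K$ lies in $K$, and for $\alpha \in \cO_K$ one has $|N_K(x_0 + \alpha)| = q^{-d}\, |N_K(\beta + q\alpha)| \in q^{-d} \bZ_{\geq 0}$. Thus $m_{\Kb/\cO_K}(z)$ is an infimum over a subset of the discrete set $q^{-d}\bZ_{\geq 0}$; it therefore lies in $\bQ$ and is attained. The borderline case $m_{\Kb/\cO_K}(z) = 0$ forces, via discreteness, some $y \in \pi_{\cO_K}^{-1}(z)$ with $N_K(y) = 0$, hence $y = 0$ and $z = 0$, where the infimum is trivially attained.
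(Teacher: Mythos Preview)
Your proof is correct and follows essentially the same approach as the paper's: the same use of $|N_K(u)|=1$ for part~(1), the same lift-and-continuity argument for part~(2) (the paper phrases it as a contradiction, you give it directly), the same combination of (1) and (2) with the elementary bound $|N_\Kb(x)|\leq \|x\|^d$ for part~(3), and the same discreteness argument via $q^{-d}\bZ_{\geq 0}$ for part~(4). Your closing remark on the case $m_{\Kb/\cO_K}(z)=0$ is superfluous, since the well-ordering of $q^{-d}\bZ_{\geq 0}$ already guarantees the infimum is attained in all cases.
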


\begin{proof}{\noindent\it (1)} As $z'\in G.z\Leftrightarrow z\in G.z'$ it suffices to show $m_{\Kb/\cO_K}(z')\leq m_{\Kb/\cO_K}(z)$. Suppose $z'=uz$ where $u\in G\subset U_K$. Then for all $x\in\pi_{\cO_K}^{-1}(z)$,  $ux$ is in $\pi_{\cO_K}^{-1}(z')$ because $u\cO_K=\cO_K$. Thus $m_{\Kb/\cO_K}(z')\leq |N_\Kb(ux)|=|N_K(u)N_\Kb(x)|=|N_\Kb(x)|$ by (\ref{Normnatural}), where we used the fact that $N_K(u)=\pm 1$ when $u\in U_K$. Claim (1) is obtained by taking infimum over all $x$.

{\noindent\it (2)} Suppose the opposite is true. Then there must be a converging sequence $\{z_n\}$ with limit $z$ and a constant $\epsilon>0$ such that $m_{\Kb/\cO_K}(z_n)>m_{\Kb/\cO_K}(z)+\epsilon, \forall n$.
By the definition of $m_{\Kb/\cO_K}(z)$, one may find $x\in\pi_{\cO_K}^{-1}(z)$ such that $|N_\Kb(x)|<m_{\Kb/\cO_K}(z)+\epsilon$. There exist a sequence of lifts $\{x_n\}$ such that $\pi_{\cO_K}(x_n)=z_n, \forall n$ and $\lim_{n\rightarrow\infty}x_n=x$. Since $N_\Kb$ is a continuous function on $\Kb$, we have \begin{equation}\lim_{n\rightarrow\infty}m_{\Kb/\cO_K}(z_n)\leq\lim_{n\rightarrow\infty}|N_\Kb(x_n)|=|N_\Kb(x)|<m_{\Kb/\cO_K}(z)+\epsilon,\end{equation} contradicting the choice of $\{z_n\}$ and $\epsilon$.

{\noindent\it (3)} It follows from (1) and (2) that $m_{\Kb/\cO_K}(z)\leq\inf_{z'\in\overline{G.z}}m_{\Kb/\cO_K}(z')$. Because $z\in\overline{G.z}$, the equality holds and the infimum is actually a minimum. The second inequality follows from the fact that \begin{equation}|N_\Kb(x)|=\prod_{i\in I}|x_i|^{d_i}\leq\|x\|^d,\forall x\in\Kb\end{equation} and the definition of the metric $\|\cdot\|$ on $\Kb/\cO_K$.

{\noindent\it (4)} For $z\in K/\cO_K$, there is $q\in\bN$ such that $z$ is of order $q$ in $K/\cO_K$. Then for all $x\in\pi_{\cO_K}^{-1}(z)$, $qx\in\cO_K$. Hence $x\in q^{-1}\cO_K$ and $|N_K(x)|\in q^{-d}\bZ$. So $m_{\Kb/\cO_K}(z)$, which is the infimum of all the $|N_K(x)|$'s, lies in the discrete set $q^{-d}\bZ$ as well and equals $|N_K(x)|$ for at least one $x\in\pi_{\cO_K}^{-1}(z)$.\end{proof}

\subsection{Reduction to a bounded domain}\label{subsecbounded}
We introduce an upper bound on the size of fundamental units by

\begin{equation}\cF_{U_K}=\min_{(u_1,\cdots, u_r)}\max_{l=1}^rh^\Mah(u_l),\end{equation} where the minimum is taken over all sets of fundamental units $(u_1,\cdots,u_r).$
Recall $u_1,\cdots,u_r\in U_K$ form a set of fundamental units if: (1) they are multiplicatively independent, i.e. $\prod_{l=1}^ru_l^{e_l}, e_l\in\bZ$ is equal to $1$ if and only if the $e_l$'s are all zero, and (2) together with all roots of unity in $K$, they generate $U_K$. $\cF_{U_K}$ is well defined and strictly positive.

Take the previously defined convex norm $h_0$ on $W$ and recall that $\cL(U_K)$ is a cocompact lattice of the $r$-dimensional vector space $W$. Consider the successive minima $0<t_1\leq t_2\cdots\leq t_r$ of $\cL(U_K)$ with respect to $h_0$. Then it follows from the fact $h^\Mah=h_0\circ\cL$ and the definition of $\cF_{U_K}$ that $\cF_{U_K}=t_r$.

Recall that the {\bf regulator} $R_K$ of $K$ is the determinant of the lattice $\cL(U_K)$. So Minkowski's theorem implies that $R_K\ll_d\prod_{j=1}^rt_j$. But each $t_j$ can be written as $h_0(\cL(u))=h^\Mah(u)$ for some $u\in U_K$ of infinite order. And it is known that the logarithmic Mahler measure of any algebraic unit of infinite order is bounded from below by a constant depending only on its algebraic degree (see for example \cite{V96}), i.e. $t_j\gg_d 1$ for all $j$. Hence \begin{equation}1\ll_d\cF_{U_K}\ll_d(t_1\cdots t_{r-1})^{-1}R_K\ll_dR_K.\end{equation} Moreover, by the work of Sands \cite{S91}, $R_K\ll_{d}D_K^{\frac12}(\log D_K)^d$. Therefore,
\begin{equation}\label{heightbd}1\ll_d\cF_{U_K}\ll_dD_K^{\frac12}(\log D_K)^d.\end{equation}

\begin{lemma}\label{compactness}Suppose $G$ is a finite-index subgroup of $U_K$, then there is a constant $C$ depending on $K$ and $G$ such that for any non-zero element $x\in\Kb$, there is $g\in G$ satisfying that $|(gx)_i|\leq C|N_\Kb(x)|^{\frac1d}$ for all $i\in I$.

Moreover, when $G=U_K$, $C$ can be taken to be $e^{\frac12r\cF_{U_K}}$.\end{lemma}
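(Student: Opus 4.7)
My plan is to convert the multiplicative $G$-action on $\Kb$ into the additive translation action of $\cL(G)$ on the vector space $W$, at which point the statement becomes a problem of lattice approximation. Throughout I assume $N_\Kb(x)\neq 0$ (the case of a non-zero $x$ with some vanishing coordinate is incompatible with the stated inequality as written, so ``non-zero'' should be read in this stronger sense).

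For each $i\in I$ set $y_i=\log|x_i|-\tfrac{1}{d}\log|N_\Kb(x)|$. The identity $|N_\Kb(x)|=\prod_{i\in I}|x_i|^{d_i}$ from (\ref{normdef}) gives $\sum_{i\in I}d_iy_i=0$, so $y=(y_i)_{i\in I}$ lies in the subspace $W$ of (\ref{Logembspace}). Since $|(gx)_i|=|g_i||x_i|$ by (\ref{Kbarmulticoord}), the inequality $|(gx)_i|\leq C|N_\Kb(x)|^{1/d}$ is equivalent to $\cL(g)_i+y_i\leq\log C$ for all $i$. For any finite-index $G\leq U_K$, the image $\cL(G)$ has finite index in the cocompact lattice $\cL(U_K)\subset W$ and is therefore itself a cocompact lattice, so one may fix a fundamental domain $\cD$ for $\cL(G)$ in $W$, let $\log C$ be the radius of $\cD$ in the max-norm $w\mapsto\max_{i\in I}|w_i|$, and choose $g\in G$ with $\cL(g)+y\in\cD$. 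This establishes the first assertion.

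For the explicit bound when $G=U_K$, pick fundamental units $u_1,\ldots,u_r$ achieving the minimum in the definition of $\cF_{U_K}$, so $h^\Mah(u_l)\leq\cF_{U_K}$ for each $l$. Since $\cL(u_1),\ldots,\cL(u_r)$ is a $\bZ$-basis of $\cL(U_K)$, write $-y=\sum_la_l\cL(u_l)$ with $a_l\in\bR$, round each $a_l$ to its nearest integer $\hat a_l$, and set $g=\prod_lu_l^{\hat a_l}\in U_K$. Then $\cL(g)+y=\sum_l(\hat a_l-a_l)\cL(u_l)$ and the triangle inequality yields
\begin{equation*}
\max_{i\in I}|(\cL(g)+y)_i|\leq\frac{1}{2}\sum_{l=1}^r\max_{i\in I}|\cL(u_l)_i|.
\end{equation*}
The one small computation I need to record is the inequality $\max_{i\in I}|w_i|\leq h_0(w)$ for every $w\in W$: the second expression in (\ref{Wnorm}) shows $h_0(w)=\sum_{w_j\geq 0}d_jw_j\geq d_iw_i$ whenever $w_i\geq 0$, and the third gives the analogous bound when $w_i\leq 0$, so $|w_i|\leq h_0(w)/d_i\leq h_0(w)$ since $d_i\geq 1$. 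Applied to $w=\cL(u_l)$ this gives $\max_{i\in I}|\cL(u_l)_i|\leq h^\Mah(u_l)\leq\cF_{U_K}$, whence $\max_{i\in I}|(\cL(g)+y)_i|\leq r\cF_{U_K}/2$ and $C=e^{r\cF_{U_K}/2}$ works.

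I do not expect a genuine obstacle here; the argument is pure lattice approximation after taking logarithms, and the only mildly delicate step is the observation that $h_0$ dominates the max-norm on $W$, which is needed to convert the bound on the logarithmic Mahler measure of the fundamental units into a coordinate-wise bound on their logarithmic embeddings.
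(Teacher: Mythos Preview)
Your proof is correct and follows the same overall scheme as the paper: pass to logarithmic coordinates, reduce to approximating a point of $W$ by the lattice $\cL(G)$, and use that $h_0$ dominates the coordinate-wise maximum on $W$. The difference is in how the lattice approximation is carried out for $G=U_K$. The paper invokes Jarn\'{i}k's theorem on the inhomogeneous minimum, obtaining a lattice point $y\in\cL(U_K)$ with $h_0(w-y)\leq\tfrac12\sum_j t_j\leq\tfrac12 r\,\cF_{U_K}$ via the successive minima $t_1\leq\cdots\leq t_r$ and the identification $\cF_{U_K}=t_r$; you instead fix a basis of $\cL(U_K)$ coming from fundamental units realizing $\cF_{U_K}$ and round the coefficients of $-y$ in that basis to nearest integers. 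Your route is more elementary and self-contained (no external lattice result is cited), while the paper's formulation makes the dependence on the successive minima explicit; both yield the identical constant $e^{r\cF_{U_K}/2}$. For general finite-index $G$ the paper deduces the statement from the $U_K$ case by passing to coset representatives, whereas you argue directly via a bounded fundamental domain for $\cL(G)$; either is fine.
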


\begin{proof} We give first a proof in the special case that $G=U_K$.

Define $w\in\bR^I$ by $w_i=\log|x_i|-\frac1d\sum_{j\in I}d_j\log|x_j|$. Then $\sum_{j\in I}d_jw_j=0$ and thus $w$ is in the space $W$ given by (\ref{Logembspace}).

As above, let $t_1,\cdots,t_r$ be the successive minima of $\cL(U_K)$ with respect to $h_0$. By a theorem of Jarn\'{i}k (see \cite{GL87}*{p99}), there is a vector $y\in\cL(U_K)$ such that $h_0(w-y)\leq\frac{\sum_{j=1}^{r}t_j}2\leq\frac 12r\cF_{U_K}$. In particular, $w_i-y_i\leq d_i|w_i-y_i|\leq h_0(w-y)\leq\frac12r\cF_{U_K}$ for all $i\in I$ by (\ref{Wnorm}).

Suppose $y=\cL(v)$ where $v\in U_K$, then $w_i-y_i$ is just $\log|x_i|-\log|v_i|-\frac1d\sum_{j\in I}d_j\log|x_j|$, which equals $\log|(v^{-1})_ix_i|-\frac1d\log|N_\Kb(x)|$. Hence the previous inequality says $|(v^{-1})_ix_i|\leq e^{\frac12r\cF_{U_K}}|N_\Kb(x)|^{\frac1d}$. This proves the lemma form $G=U_K$.

From this special case one can easily deduce the general statement for any finite-index subgroup $G\subset U_K$. Actually,  Fix a set $A$ consisting of one representative from each residue class in the finite quotient $U_K/G$. We already proved there is $u\in U_K$ such that $|(ux)_i|<e^{\frac12r\cF_{U_K}}|N_\Kb(x)|^{\frac1d}$. Pick $g\in G$ such that $g^{-1}u\in A$. Then $|(gx)_i|<C|N_\Kb(x)|^{\frac1d}$ where $C=(\max_{\substack{a\in A\\j\in I}}|a_j|)\cdot e^{\frac12r\cF_{U_K}}$.\end{proof}

\begin{corollary}\label{finiteres}Suppose $G$ is a finite-index subgroup of $U_K$ then there exists a constant $R>0$ such that \begin{equation}\label{finitereseq}m_\Kb(x)=\inf_{x'\in (G.x+\cO_K)\cap B_R}|N_\Kb(x')|,\forall x\in\Kb\end{equation} where $B_R$ is defined as in (\ref{box}).

Moreover, one can take $R=D_K^{\frac1d}e^{\frac12r\cF_{U_K}}$ if $G=U_K$.\end{corollary}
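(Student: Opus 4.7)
The one direction $m_\Kb(x)\leq\inf_{x'\in(G.x+\cO_K)\cap B_R}|N_\Kb(x')|$ is immediate once we observe that because $N_K(g)=\pm1$ for any $g\in G\subset U_K$, the identity $|N_\Kb(gx+g\alpha)|=|N_\Kb(x+\alpha)|$ implies the set of norms on $G.x+\cO_K$ coincides with the set of norms on $x+\cO_K$, so restricting to the sub-domain $B_R$ can only increase the infimum. The content of the corollary is therefore the reverse inequality, and for this the plan is to take an arbitrary near-minimizer $y\in x+\cO_K$ and use Lemma~\ref{compactness} to bring it into a uniformly bounded box by a multiplicative $G$-translation, at the price of changing $x$ to $gx$ inside the same coset under $G.x+\cO_K$.

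In detail, given $\epsilon>0$, choose $y\in x+\cO_K$ with $|N_\Kb(y)|<m_\Kb(x)+\epsilon$. The case $y=0$ is trivial since then $m_\Kb(x)=0$ and $0\in B_R$ for any $R>0$. When $y\ne0$, Lemma~\ref{compactness} supplies $g\in G$ with $|(gy)_i|\leq C|N_\Kb(y)|^{1/d}$ for every $i\in I$, where $C$ depends only on $K$ and $G$. By the upper bound~\eqref{EMupper} we have $m_\Kb(x)\leq M(K)\leq 2^{-d}D_K$, so by taking $\epsilon$ small enough we can ensure $|N_\Kb(y)|\leq D_K$, and hence $gy\in B_{CD_K^{1/d}}$. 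Setting $R:=CD_K^{1/d}$, the point $gy=gx+g\alpha$ (with $\alpha\in\cO_K$, so $g\alpha\in\cO_K$) lies in $(G.x+\cO_K)\cap B_R$, and $|N_\Kb(gy)|=|N_\Kb(y)|<m_\Kb(x)+\epsilon$. Letting $\epsilon\to 0$ gives the required inequality.

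For the explicit constant in the case $G=U_K$, one invokes the quantitative part of Lemma~\ref{compactness}, which allows $C=e^{\frac12 r\cF_{U_K}}$. Plugging this into the choice $R=CD_K^{1/d}$ yields precisely $R=D_K^{1/d}e^{\frac12 r\cF_{U_K}}$, as claimed.

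The argument is short and the only point requiring a moment of care is that $R$ must be chosen uniformly in $x$; this works because $m_\Kb(x)$ is uniformly controlled by $M(K)\leq 2^{-d}D_K$, so one single box $B_R$ of size depending only on $K$ (and on $G$ through the constant from Lemma~\ref{compactness}) suffices to capture the infimum for every $x\in\Kb$.
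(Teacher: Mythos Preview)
Your proof is correct and follows essentially the same approach as the paper's: establish the easy inequality from $G$-invariance of the norm, then for the reverse pick a near-minimizer $y\in x+\cO_K$, use the uniform bound $m_\Kb(x)\leq 2^{-d}D_K$ from \eqref{EMupper} to control $|N_\Kb(y)|$, and apply Lemma~\ref{compactness} to move $y$ into $B_R$ with $R=CD_K^{1/d}$. Your treatment is slightly more explicit than the paper's in handling the trivial case $y=0$ and in spelling out why $gy\in G.x+\cO_K$, but the argument is the same.
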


\begin{proof}For $z=\pi_{\cO_K}(x)$, it follows from Lemma \ref{toralmin} that \begin{equation}m_\Kb(x)= m_{\Kb/\cO_K}(z)
= \inf_{z'\in G.z}m_{\Kb/\cO_K}(z)=\inf_{x'\in G.x+\cO_K}|N_\Kb(x')|.\end{equation} This shows $m_\Kb(x)$ is bounded from above by the right-hand side of (\ref{finitereseq}).

On the other hand, for an arbitrarily small $\epsilon>0$, one can pick $y\in x+\cO_K$ such that $|N_\Kb(y)|\leq m_\Kb(x)+\epsilon$. Since $m_\Kb(x)\leq M(\Kb)\leq 2^{-d}D_K$ by (\ref{EMupper}), we may assume $|N_\Kb(y)|\leq D_K$. Lemma \ref{compactness} asserts that there exists an element $x'$ in \begin{equation}G.y\cap B_{C|N_\Kb(y)|^{\frac1d}}\subset (G.x+\cO_K)\cap B_{C|N_\Kb(y)|^{\frac1d}}\subset  (G.x+\cO_K)\cap B_R.\end{equation} where $C$ is the constant in the lemma and $R=CD_K^\frac1d$. Then $|N_\Kb(x')|=|N_\Kb(y)|\leq m_\Kb(x)+\epsilon$ by the $G$-invariance in Lemma \ref{toralmin}, and thus as $\epsilon$ can be arbitrarily small we see $m_\Kb(x)\geq\inf_{x'\in (G.x+\cO_K)\cap B_R}|N_\Kb(x')|$, which completes the proof.
\end{proof}

\section{Computational complexity in non-CM fields}\label{seccomplexity}

In this section, let $K$ be a non-CM field whose unit rank is $r\geq 2$.

\subsection{Rigidity of Cartan actions by toral automorphisms}

In light of Lemma \ref{toralmin}, in order to understand $m_{\Kb/\cO_K}(z)$ it may be helpful to study the $G$-orbit of $z$ in the torus $\Kb/\cO_K$.

\begin{definition}A toral automorphism $\varphi\in\Aut(\bT^d)=\GL(d,\bZ)$ is {\bf irreducible} if $\bT^d$ has no proper non-trivial $\varphi$-invariant subtorus. $\varphi$ is {\bf totally irreducible} if $\varphi^k$ is irreducible for all non-zero integers $k$.\end{definition}

$\varphi$ is irreducible if and if its characteristic polynomial is irreducible over $\bQ$, or equivalently the eigenvalues of $\varphi$ are distinct conjugate algebraic numbers of degree $d$.

As we have seen in (\ref{Kbarmulticoord}), when one identifies $\Kb$ with $\bR^{r_1}\oplus\bC^{r_2}$ by $\sigma$, the multiplication by $\theta\in K$ multiplies on the $i$-copy of $\bR$ by $\theta_i$, and on the $j$-th copy of $\bC$ by $\theta_{r_1+j}$ or, if we view $\bC$ as $\bR^2$, by the matrix $\left(\begin{array}{cc}\re{\theta_{r_1+1}}&-\im{\theta_{r_1+1}}\\
\im{\theta_{r_1+1}}&\re{\theta_{r_1+1}}\end{array}\right),$ which have eigenvalues $\theta_{r_1+j}$ and $\theta_{r_1+r_2+j}$. Hence with respect to some complex eigenbasis (of $\Kb\otimes_\bR\bC=K\otimes_\bQ\bC$), the multiplication by $\theta$, which is a linear transformation on $\Kb$, can be diagonalized as $\diag{\theta_1,\cdots,\theta_d}$ simultaneously for all $\theta\in K$. It follows that:

\begin{remark}\label{irrrmk} Each $u\in U_K$ acts as an irreducible toral automorphism on $\Kb/\cO_K$ if and only if $\bQ(u)=K$.\end{remark}

\begin{definition}
Suppose $X\cong\bT^d$ and $G\subset\Aut(X)$ is an abelian subgroup of toral automorphisms on $X$. We say the action $G\curvearrowright X$ is {\bf Cartan} if the following conditions are satisfied:\begin{enumerate}[(i)]
\item There is an element $g\in G$ which acts as an irreducible toral automorphism of $X$;
\item One cannot find a larger abelian subgroup $G_1\supset G$ in $\Aut(X)$, such that $\rank(G_1)>\rank(G)$.
\end{enumerate}
\end{definition}

\begin{theorem}\label{Berend}{\bf (Berend, \cite{B83}) }
Suppose $G\curvearrowright X$ is a faithful Cartan action by automorphisms on a torus $X$, where $G$ is an abelian group of rank $r\geq 2$ and at least one element $g\in G$ acts as a totally irreducible toral automorphism. Then every $G$-orbit is either a finite set of torsion points, or dense in $X$.
\end{theorem}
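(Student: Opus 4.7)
The plan is to separate the torsion case from the infinite-orbit case. First observe that if $G.x$ is finite then every $g\in G$ permutes it, so some power $g^n$ fixes $x$; because $g$ is totally irreducible the integer matrix $g^n-1$ has no rational eigenvalue (the characteristic polynomial of $g^n$ being irreducible of degree $d\geq 2$), hence is invertible on $\bR^d=\lie(X)$, and the relation $(g^n-1)x=0$ in $X$ forces $x$ to be a rational (torsion) point. It therefore suffices to prove that whenever $G.x$ is infinite, $\overline{G.x}=X$.

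Set $Y=\overline{G.x}$ and analyze its translation stabilizer $H=\{h\in X:h+Y=Y\}$, a closed $G$-invariant subgroup of $X$. Its identity component $H_0$ is a $G$-invariant subtorus, and the totally irreducible $g\in G$ admits no proper nontrivial invariant subtorus, so $H_0\in\{\{0\},X\}$. If $H_0=X$ then $Y=X$ and we are done. Otherwise $H$ is finite, and passing to the quotient torus $X/H$ (on which the descended $G$-action remains Cartan with a totally irreducible element, and on which the image of $x$ is still non-torsion since $H$ consists of torsion points) we may assume $H=\{0\}$, so that $Y$ has trivial translation stabilizer.

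The heart of the argument is then to show that a proper closed $G$-invariant set $Y\subsetneq X$ with trivial translation stabilizer cannot contain a non-torsion orbit. This is where the Cartan/rank hypothesis enters via a return-time argument. Decompose $\bR^d\otimes_\bR\bC$ into joint eigenspaces $V_\chi$ for the commuting $G$-action, indexed by characters $\chi:G\to\bC^*$; total irreducibility of $g$ forces its eigenvalues to be distinct, so every $V_\chi$ is one-dimensional and the characters are pairwise distinct. For $y\in Y$ and $\epsilon>0$, compactness of $Y$ forces the set of returns $R_\epsilon=\{g\in G:gy\in Y,\ \|gy-y\|<\epsilon\}$ to be infinite. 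Using the hypothesis $\rank(G)\geq 2$ one selects elements of $R_\epsilon$ with controlled, mutually independent expansion/contraction patterns along the lines $V_\chi$ and applies them to small return vectors; the images accumulate in more than one eigen-direction, and the closure of the subgroup of $X$ they generate is a nontrivial closed $G$-invariant subgroup. Its identity component is a nonzero $G$-invariant subtorus, which by total irreducibility equals $X$, contradicting $H=\{0\}$.

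The main obstacle is the return-time argument above. One must simultaneously use total irreducibility of the distinguished $g$ (to guarantee distinctness of the joint characters, so that contracting in one eigen-direction forces expansion in another) and the assumption $\rank(G)\geq 2$ (to supply enough independent dynamical directions that the small returns do not collapse into a single Lyapunov line). A single totally irreducible hyperbolic toral automorphism generally admits orbit closures lying strictly between finite sets and the full torus, so the higher-rank assumption is genuinely essential, and separating the roles of the two hypotheses cleanly is the combinatorial-dynamical core of Berend's theorem that cannot be shortened.
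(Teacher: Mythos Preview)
The paper does not actually prove this theorem: its ``proof'' consists of citing Berend's original, more general result on semigroups of commuting toral endomorphisms, noting (via \cite{LW10}) that the Cartan and total-irreducibility hypotheses imply Berend's conditions when $G\cong\bZ^r$, and then reducing the general finitely generated abelian case to this by passing to a finite-index free abelian subgroup. So the paper treats the statement as a black box.

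Your proposal is more ambitious in that you try to sketch Berend's argument itself. The preliminary reductions are correct: the finite-orbit case is handled properly (if $g^n$ fixes $x$ and $g^n-1$ is invertible over $\bQ$, then any lift of $x$ is rational), and the reduction via the translation stabilizer $H$ of $Y=\overline{G.x}$ is standard, with the minor caveat that on the quotient $X/H$ what you actually need and preserve are rank $\geq 2$ and total irreducibility, not the full Cartan condition. The gap is in what you call the ``heart of the argument'': you name the ingredients (joint eigenspace decomposition, recurrence, selecting returns with independent contraction patterns using rank $\geq 2$, accumulation of return vectors into a nontrivial $G$-invariant subgroup of stabilizers) but you do not carry out any of it. In particular, you never construct the specific sequences in $G$ that contract a given eigen-line while controlling the others, nor do you verify that the resulting limit displacements actually generate a subgroup whose closure has positive dimension. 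You yourself concede this is ``the combinatorial-dynamical core \dots\ that cannot be shortened,'' which is an accurate assessment but also an admission that the proof is not there. For the purposes of this paper the correct move is the authors': cite \cite{B83} and check that the stated hypotheses fit Berend's framework.
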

\begin{proof}Berend's original theorem from \cite{B83} was in fact much stronger as he showed the conclusion above holds for a much larger class of semigroup actions by commuting toral endomorphisms. It was known that the assumptions stated here imply Berend's conditions when $G\cong\bZ^r$ (see, for instance, \cite{LW10}*{\S 2.4}).  For a general finitely generated abelian group $G$, there is a finite subgroup $G'$ that is isomorphic to $\bZ^r$ and the rigidity of the $G$-action on $X$ follows easily from that of its restriction to $G'$.\end{proof}

We now make the link between the $U_K$-action on $\Kb/\cO_K$ and dynamics.

\begin{lemma}\label{UKCartan}Suppose $K$ is a non-CM number field of unit rank $r\geq 2$, and $G\subset U_K$ is a finite-index group that is isomorphic to $\bZ^r$. Then:
\begin{enumerate}[(1)]
\item There is $u\in G$ that acts as a totally irreducible automorphism on $\Kb/\cO_K$.
\item The multiplicative $G$-action (\ref{Kbarmulti}) on $\Kb/\cO_K$ is Cartan.
\end{enumerate}
\end{lemma}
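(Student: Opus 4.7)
The plan is to prove~(1) by a subgroup-avoidance argument that exploits the non-CM hypothesis, and then to derive~(2) from~(1) via a commutant computation in $\End_\bR(\Kb)$.

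For~(1), Remark~\ref{irrrmk} says that $u\in G$ is totally irreducible on $\Kb/\cO_K$ iff $\bQ(u^k)=K$ for every nonzero integer $k$, i.e.\ no nontrivial power of $u$ lies in any proper subfield of $K$. Since $K$ has only finitely many subfields, it suffices to avoid, for each proper $F\subsetneq K$, the ``radical'' set $H_F:=\{v\in U_K:v^k\in U_F\text{ for some }k\neq 0\}$. Modulo torsion, $H_F$ is contained in a sublattice of $U_K/\mathrm{tors}\cong\bZ^r$ whose rank equals $\rank(U_F)$. The non-CM hypothesis is exactly that $\rank(U_F)<r$ for every proper $F$, so each $G\cap H_F$ lies in a rational subspace of $G\otimes\bQ\cong\bQ^r$ of dimension strictly less than~$r$. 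A finite union of proper rational subspaces cannot cover $\bQ^r$, and any $u\in G$ in the complement does the job.

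For~(2), condition~(i) follows from~(1). For~(ii), let $G_1\supset G$ be any abelian subgroup of $\Aut(\Kb/\cO_K)$ and fix a totally irreducible $u\in G$ as above. Every $g\in G_1$ commutes with the $\bR$-linear operator on $\Kb$ induced by multiplication by $u$. Since $\bQ(u)=K$, the eigenvalues $\sigma_1(u),\dots,\sigma_d(u)$ of $u$ on $\Kb\otimes_\bR\bC$ are distinct, so the commutant of $u$ in $\End_\bC(\Kb\otimes_\bR\bC)$ is the diagonal algebra of complex dimension~$d$. Descending, the commutant of $u$ in $\End_\bR(\Kb)$ has real dimension~$d$; since multiplication by elements of $\Kb$ already furnishes a $d$-dimensional commutative subalgebra inside this commutant, the two coincide. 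Hence every $g\in G_1$ acts on $\Kb$ as multiplication by some $\theta\in\Kb$; preserving $\cO_K$ forces $\theta=\theta\cdot 1\in\cO_K$, and invertibility forces $\theta\in U_K$. Thus $G_1\subset U_K$, so $\rank(G_1)\leq r=\rank(G)$ and the Cartan property follows.

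The main obstacle I expect is the commutant computation: one has to move carefully between the $\bQ$-, $\bR$-, and $\bC$-structures on $\Kb$ and verify that an $\bR$-linear automorphism of $\Kb$ preserving $\cO_K$ that commutes with $u$ really arises from $K$-multiplication. Once that identification is in hand, both parts fall out cleanly, with the non-CM hypothesis entering only in~(1), precisely to supply the totally irreducible element that~(2) then uses to pin down the centralizer.
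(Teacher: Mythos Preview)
Your proof is correct, and both halves reach the same conclusions as the paper, but with somewhat different arguments.

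For part~(1), the paper does not argue directly: it cites \cite{C06}*{Lemma~2} for the existence of a unit $u_0\in U_K$ with $\bQ(u_0^k)=K$ for all $k\neq0$, and then passes to a power lying in $G$. Your subgroup-avoidance argument is a genuine, self-contained replacement for that citation. It makes transparent exactly where the non-CM hypothesis enters (via $\rank(U_F)<r$ for every proper subfield), and the step ``$H_F$ modulo torsion has rank $\rank(U_F)$'' is just the statement that the isolator of $U_F$ in $U_K$ has the same rank as $U_F$, which is immediate. This is a nice improvement in exposition.

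For part~(2), both proofs pin down the centralizer of $u$ in $\GL(\Kb)$ and show $G_1\subset U_K$. The paper does it by an elementary spanning argument: set $\gamma=A\cdot 1\in\cO_K$, observe $(A-\gamma)\cdot u^k=u^k\cdot(A-\gamma)\cdot1=0$, and use that $1,u,\dots,u^{d-1}$ span $\Kb$ over~$\bR$ to conclude $A=\gamma$. Your route---diagonalize over $\bC$ using the distinct eigenvalues $\sigma_i(u)$, get a $\bC$-dimension-$d$ commutant, descend to an $\bR$-dimension-$d$ commutant, and identify it with multiplication by $\Kb$---is the standard linear-algebra proof of the same centralizer fact. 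The paper's version is a touch more elementary (no passage to $\bC$), while yours makes the semisimplicity of $u$ do the work explicitly. Either way, the final step (preserving $\cO_K$ plus invertibility forces $\theta\in U_K$) is identical in spirit: $\theta=\theta\cdot1\in\cO_K$, and applying the same to $g^{-1}$ gives $\theta^{-1}\in\cO_K$.
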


\begin{proof} (1) By Remark \ref{irrrmk}, it suffices to find $u\in G$ such that for each non-zero integer $k$, $\bQ(u^k)=K$. It can be checked that there is  $u_0\in U_K$ that has this property (see \cite{C06}*{Lemma 2}). As $G$ has finite index in $U_K$, it contains a non-trivial power $u$ of $u_0$, which has the desired property as well.

(2) Suppose the $G$-action is not Cartan, that is, there is a larger abelian group $G_1\supset G$ that acts on $\Kb/\cO_K$ by toral automorphisms, such that $\rank(G_1)>\rank(G)$ and the restriction of the $G_1$-action to $G$ coincide with the previously defined multiplicative action.

As by part (1) $G$ contains an element $u$ such that $\bQ(u)=K$. Then $1,u,\cdots,u^{d-1}$ are linearly independent over $\bQ$ where $d$ denotes the degree of $K$.

Take an arbitrary element $A$ from $G_1$. Then $A$ can be regarded as an element from $\Aut(\Kb/\cO_K)$, or equivalently, an element from $\GL(\Kb)$ that preserves the lattice $\cO_K$. Consider the element $1$ from $\cO_K$, then $A.1\in\cO_K$ and we denote it by $\gamma$. Hence $(A-\gamma).1=0$ where $A$ and $\gamma$ are both regarded as linear maps from $\Kb$ to itself. Since $G_1$ is abelian, $A$ commutes with the $G$-action by definition. In particular, $A$ commutes with the multiplication by $u$ as elements from $GL(\Kb)$. Hence for any power $u^k$, \begin{equation}(A-\gamma).u^k=(A-\gamma).(u^k.1)=u^k.(A-\gamma).1=0,\end{equation} where $u^k$ is regarded as a vector from $\Kb$ in the first expression and as a linear map thereafter. However since $1,u,\cdots,u^{k-1}$ are linearly independent, they span the $\bQ$-vector space $K$ and hence $\Kb=K\otimes_\bQ\bR$ as well. It follows that $(A-\gamma).v=0$ for any $v\in\Kb$. In other words, as an element from $\GL(\Kb)$, $A$ coincide with the multiplication by $\gamma\in\cO_K$.

Apply the same argument again, we wee $A^{-1}$ coincide with the multiplication by some $\beta\in\cO_K$. Then the multiplicative action by $\gamma\beta\in\cO_K$ on $\Kb$ is trivial, which is possible only if $\gamma\beta=1$. Hence $\gamma$ belongs to $U_K=\cO_K^*$; that is, $A$ is actually the multiplication by some element from $U_K$.

Since this is true for all $A\in G_1$. $G_1$ can be regarded as a subgroup of $U_K$. So as $G$ has finite index in $U_K$, $G_1$ cannot have strictly higher rank than $G$, which contradicts the assumption. This completes the proof.\end{proof}

\subsection{Effective aspects of rigidity}

In order to show Theorem \ref{complexity}, we will apply Proposition \ref{effBerend} below, an effective version of Theorem \ref{Berend}, to the action $U_K\curvearrowright\Kb/\cO_K$. Before doing this, one needs to introduce the notion of distortion.

The distortion of an isomorphism $\psi:\bR^d\mapsto\Kb$ can be measured by

\begin{equation}\label{distortion}\cM_\psi=\max\Big(\|\psi\|\Vol\big(\Kb/\psi(\bZ^d)\big)^{-\frac 1d},\|\psi^{-1}\|\Vol\big(\Kb/\psi(\bZ^d)\big)^\frac1d\Big),\end{equation}
where $\|\psi\|$ and $\|\psi^{-1}\|$ are norms of linear maps.  $\cM_\psi$ is always greater than or equal to $1$.

Note that $\psi$ projects to an isomorphism between $\bT^d$ and $\Kb/\psi(\bZ^d)$, which we denote by $\psi$ as well.

The distortion of an ideal lattice in $\Kb$ can be bounded in terms of the discriminant:

\begin{lemma}\label{idealreg}\cite{W10}*{Lemma A.5} If $I$ is an ideal in $\cO_K$, then there exists an isomorphism $\psi$ between $\bR^d$ and $\Kb$ such that $\psi(\bZ^d)=I$ and $\cM_\psi=O_d\big(D_K^{\frac{d-1}{2d}}\big)$
\end{lemma}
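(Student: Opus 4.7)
The plan is to take $\psi$ to be the linear map sending the standard basis of $\bZ^d$ to a well-chosen basis of $I$, viewed as a lattice in $\Kb$ of covolume $N(I)\sqrt{D_K}$ via the canonical embedding $\sigma$, where $N(I)=[\cO_K:I]$. The first key ingredient is a lower bound on the shortest vector. For any nonzero $\alpha\in I\subset\cO_K$, the algebraic norm $|N_K(\alpha)|$ is a positive integer divisible by $N(I)$ since $(\alpha)\subset I$, while
$$\|\alpha\|^d \;\geq\; \max_i|\alpha_i|^d \;\geq\; \prod_i|\alpha_i|^{d_i} \;=\; |N_\Kb(\alpha)|.$$
Consequently the first successive minimum of the lattice $I$ satisfies $\lambda_1\geq N(I)^{1/d}$.

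Let $\lambda_1\leq\cdots\leq\lambda_d$ be all the successive minima of $I$ with respect to the Euclidean norm on $\Kb$. Minkowski's second theorem yields $\prod_j\lambda_j\ll_d N(I)\sqrt{D_K}$, which combined with the bound on $\lambda_1$ forces $\lambda_d\ll_d N(I)^{1/d}\sqrt{D_K}$. Now pick a Minkowski-reduced basis $v_1,\ldots,v_d$ of $I$. This is the classical geometry-of-numbers construction producing $\|v_j\|\ll_d\lambda_j$ for each $j$, and whose dual basis $w_1,\ldots,w_d$ (characterised by $\langle v_i,w_j\rangle=\delta_{ij}$) satisfies $\|w_j\|\ll_d 1/\lambda_{d+1-j}$ by Mahler duality. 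Defining $\psi:\bR^d\to\Kb$ by $\psi(e_j)=v_j$, one has $\psi(\bZ^d)=I$, and since the matrix of $\psi^{-1}$ has rows $w_j$, one obtains $\|\psi\|\ll_d\lambda_d$ and $\|\psi^{-1}\|\ll_d 1/\lambda_1$.

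The distortion estimate then follows by direct computation:
$$\|\psi\|\,\Vol(\Kb/I)^{-1/d} \;\ll_d\; \frac{\lambda_d}{\bigl(N(I)\sqrt{D_K}\bigr)^{1/d}} \;\ll_d\; D_K^{(d-1)/(2d)},$$
$$\|\psi^{-1}\|\,\Vol(\Kb/I)^{1/d} \;\ll_d\; \frac{\bigl(N(I)\sqrt{D_K}\bigr)^{1/d}}{\lambda_1} \;\leq\; D_K^{1/(2d)}.$$
Taking the maximum yields $\cM_\psi=O_d\bigl(D_K^{(d-1)/(2d)}\bigr)$, as required. The only step that is not pure manipulation is the simultaneous control of $\|\psi\|$ by $\lambda_d$ and of $\|\psi^{-1}\|$ by $\lambda_1^{-1}$, which is the main (but standard) technical point; if one ignored this and used only the crude bound $\|\psi^{-1}\|\ll_d\lambda_d^{d-1}/\Vol(\Kb/I)$, one would lose a factor and obtain the strictly weaker exponent $(d-1)^2/(2d)$ on $D_K$, explaining why the appeal to a reduced basis (rather than an arbitrary one) is essential.
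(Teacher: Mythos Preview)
The paper does not prove this lemma at all: it is simply quoted from \cite{W10}*{Lemma A.5}, so there is no in-paper argument to compare against. Your proof is a correct and standard geometry-of-numbers argument, and would serve perfectly well as a self-contained proof of the cited lemma.

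One small point deserves tightening. You assert that the dual basis of a Minkowski-reduced basis satisfies $\|w_j\|\ll_d 1/\lambda_{d+1-j}$ ``by Mahler duality''. Mahler's transference theorem controls the successive minima $\lambda_j^*$ of the dual lattice, not the norms of the dual basis vectors; one still has to check that the dual of a reduced basis is itself close to reduced. What you actually need is only the weakest case $\max_j\|w_j\|\ll_d 1/\lambda_1$, and this follows cleanly from Gram--Schmidt: writing $\psi=QR$ with $R$ upper triangular and $R_{jj}=\|v_j^*\|$, the bounded orthogonality defect of a Minkowski-reduced basis forces $\|v_j^*\|\gg_d\|v_j\|\geq\lambda_1$ for every $j$, and the size-reduction condition $|\mu_{ij}|\leq\tfrac12$ then gives $\|\psi^{-1}\|=\|R^{-1}\|\ll_d(\min_j\|v_j^*\|)^{-1}\ll_d 1/\lambda_1$. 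With that adjustment the argument is complete.
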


\begin{proposition}\label{effBerend}\cite{W10}*{Proposition 7.6}
\footnote{In \cite{W10}*{Proposition 7.6}, the density parameter is misstated as $(\log\log\log q)^{-C^{-1}\cF_{U_K}^2}$ instead of $(\log\log\log q)^{-C^{-1}\cF_{U_K}^{-2}}$; which results from mistakenly copying the exponent from Proposition 7.1. The proposition is otherwise not affected.}
Let $K$ be a degree $d$ non-CM number field of unit rank $2$ or higher, $\Gamma\subset\cO_K$ be a full rank sublattice preserved by $U_K$ under multiplication, $\psi$ be an isomorphism from $\bR^d$ to $\Kb$ such that $\psi(\bZ^d)=\Gamma$, and $q$ be a positive integer greater than or equal to $\exp\exp\exp\max\big(C\cM_\psi^{30d},\max(\cF_{U_K},2)^{C\cF_{U_K}^2}\big)$. Then for any torsion element $z$ in $\Kb/\Gamma$ of order at least $q$, the preimage $\psi^{-1}(U_K.z)$ of the orbit $U_K.z\subset\Kb/\Gamma$ is $(\log\log\log q)^{-C^{-1}\cF_{U_K}^{-2}}$-dense in $\bT^d$. Here $C>1$ is an effective constant that depends only on $d$.
\end{proposition}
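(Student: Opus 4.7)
The plan is to prove this effective version of Berend's theorem by extending the Fourier-analytic approach of Bourgain--Lindenstrauss--Michel--Venkatesh \cite{BLMV09} from the one-dimensional setting to the rank $r\geq 2$ multiplicative $U_K$-action, making essential use of the Cartan rigidity established in Lemma \ref{UKCartan}.

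First, I would argue by Fourier contradiction. If the preimage $\psi^{-1}(U_K.z)$ were not $\delta$-dense in $\bT^d$ for $\delta=(\log\log\log q)^{-C^{-1}\cF_{U_K}^{-2}}$, then inserting a smooth bump of radius $\approx\delta/2$ in a missed ball and expanding in Fourier series yields a nonzero frequency $\xi$ in the dual lattice $\Gamma^\ast\subset\Kb$ of norm $\|\xi\|\ll(\delta\cM_\psi)^{-O(d)}$, together with a F\o lner set $F\subset U_K$, such that $\big||F|^{-1}\sum_{u\in F}\chi_\xi(uz)\big|\gg\delta^{O(d)}$, where $\chi_\xi$ is the additive character of $\Kb/\Gamma$ associated to $\xi$ via the trace pairing. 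The distortion parameter $\cM_\psi$ enters here through the comparison of Euclidean metrics on $\bT^d$ and on $\Kb/\Gamma$, and accounts for the $\cM_\psi^{30d}$-type dependence inside the triple exponential.

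Next, because $U_K$ acts on $\Gamma$ by multiplication, the trace-adjoint action on $\Gamma^\ast$ is again multiplication, and $\chi_\xi(uz)=\chi_{u\xi}(z)$. The character-sum lower bound then forces many frequencies $\{u\xi:u\in F\}$ to evaluate to essentially the same root of unity on $z$. Since $z$ has order at least $q$, the level sets of evaluation at $z$ are cosets of sublattices of $\Gamma^\ast$ of index $\gtrsim q$; pigeonholing produces many relations $(u-v)\xi$ lying in such a sublattice. I would then iterate a frequency-reduction step in the spirit of \cite{BLMV09}: each iteration combines a near-invariance relation with the existence (Lemma \ref{UKCartan}(1)) of a totally irreducible $u_0\in U_K$ having no proper invariant sublattice in $\Gamma^\ast$, in order to replace $\xi$ by a strictly shorter nonzero frequency while losing a factor $\log q$ per step. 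After roughly $\cF_{U_K}^2$ iterations one should reach a frequency of norm less than $1$ in $\Gamma^\ast$, contradicting discreteness. The triple logarithm in $q$ arises from three nested scales---F\o lner size, initial frequency height, and $q$ itself---compounded by the iteration, while the $\cF_{U_K}^2$ exponent reflects the combinatorial cost of each reduction in a rank-$r$ group with fundamental units of logarithmic Mahler measure bounded by $\cF_{U_K}$.

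The principal obstacle will be turning this iterative reduction into a uniformly effective argument. Two quantitative inputs are required: first, a Jarn\'{i}k-type claim in the spirit of Lemma \ref{compactness} producing units $u\in U_K$ whose logarithmic embeddings are simultaneously $O(\cF_{U_K})$, so that the multiplicative stretch imposed at each step is controlled; and second, an effective lower bound on the index of any $u_0$-near-invariant proper subgroup of $\Gamma^\ast$, guaranteed qualitatively by total irreducibility but requiring delicate quantification---ultimately through the spectral gap of $u_0$ acting on $\Gamma^\ast\otimes\bR$---to produce the $\cF_{U_K}^2$ exponent. Propagating the $\cM_\psi$-dependence cleanly through many iterations, and ensuring that the pigeonhole losses do not destroy the character-sum lower bound before the reduction terminates, is what occupies the technical bulk of \cite{W10}.
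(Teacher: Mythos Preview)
This proposition is not proved in the present paper at all: it is quoted directly from \cite{W10}*{Proposition 7.6} and used as a black box (the only commentary is the footnote correcting a typo in the exponent). There is therefore no proof here against which to compare your proposal.

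Your high-level strategy---extending the Fourier-analytic iteration of \cite{BLMV09} to the higher-rank $U_K$-action---is consistent with how the introduction describes \cite{W10} (``generalizing methods from Bourgain, Lindenstrauss, Michel and Venkatesh's one-dimensional study''), so you are pointing in the right direction. But several quantitative claims in your sketch are asserted rather than justified and do not obviously hold as stated. The assertion that ``roughly $\cF_{U_K}^2$ iterations'' produce the exponent $\cF_{U_K}^{-2}$ in the density is not a correct accounting: the exponent in the density parameter does not arise as an iteration count, and your explanation of the triple logarithm via ``three nested scales'' is heuristic at best. More seriously, the frequency-reduction step you describe---using total irreducibility of a single $u_0$ to shorten a nonzero dual vector while ``losing a factor $\log q$''---does not by itself exploit the rank $r\geq 2$ hypothesis, which is essential (Berend's theorem is false in rank $1$). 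The genuine argument in \cite{W10} requires producing large multiplicatively independent pieces of $U_K$ acting in different directions, and controlling additive combinatorics of the resulting frequency sets; none of this structure appears in your outline. As you yourself note, the entire content of such a result lies in making the iteration uniformly effective, and your proposal stops well short of that.
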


\begin{corollary}\label{denombd} Let $K$ be as in Proposition \ref{effBerend}. There is a constant $C>1$ that depends only on $d$ such that if
\begin{equation}Q=\exp\exp\exp(D_K^{C\cF^2_{U_K}}),\end{equation}
then:\begin{enumerate}
\item  $m_{\Kb/\cO_K}(z)<2^{-d}$ for any rational element $z\in\Kb/\cO_K$ whose minimal order is greater than or equal to $Q$.
\item For $R=D_K^{\frac1d}e^{\frac12r\cF_{U_K}}$, \begin{equation}M(K)=\sup_{x\in
\bigcup_{\substack{q\in\bN\\2\leq q<Q}}q^{-1}\cO_K}\left(\min_{x'\in (U_K.x+\cO_K)\cap B_R}|N_K(x')|\right).\end{equation}
\end{enumerate}
\end{corollary}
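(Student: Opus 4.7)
The plan is to derive part (1) directly from the effective rigidity statement in Proposition \ref{effBerend}, and then obtain part (2) by combining part (1) with Corollary \ref{finiteres} together with a concrete lower bound $M(K)\geq 2^{-d}$ coming from the rational point $\tfrac12\in K$.

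For part (1), I would apply Lemma \ref{idealreg} to the ideal $\cO_K$ to produce an isomorphism $\psi:\bR^d\to\Kb$ with $\psi(\bZ^d)=\cO_K$ and $\cM_\psi=O_d(D_K^{(d-1)/(2d)})$. The first task is to choose the constant $C$ appearing in $Q=\exp\exp\exp(D_K^{C\cF_{U_K}^2})$ so that
$$D_K^{C\cF_{U_K}^2}\;\geq\;\max\bigl(C_0\,\cM_\psi^{30d},\;\max(\cF_{U_K},2)^{C_0\cF_{U_K}^2}\bigr),$$
where $C_0$ is the constant from Proposition \ref{effBerend}; this is possible for $C$ depending only on $d$, using $\cM_\psi^{30d}=O_d(D_K^{15(d-1)})$ for the first term and the bound $\log\cF_{U_K}=O_d(\log D_K)$ from (\ref{heightbd}) for the second. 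Once that is arranged, for any rational $z$ of order $q\geq Q$ Proposition \ref{effBerend} guarantees that $\psi^{-1}(U_K.z)\subset\bT^d$ is $(\log\log\log q)^{-C_0^{-1}\cF_{U_K}^{-2}}$-dense; since $\log\log\log Q=D_K^{C\cF_{U_K}^2}$, this density is at most $D_K^{-C/C_0}$. The next step is to transport the estimate back through $\psi$: using $\|\psi\|\leq\cM_\psi\,D_K^{1/(2d)}=O_d(D_K^{1/2})$, the orbit $U_K.z$ becomes $O_d(D_K^{1/2-C/C_0})$-dense in $\Kb/\cO_K$. Lemma \ref{toralmin}(3) then converts this density into the norm bound
$$m_{\Kb/\cO_K}(z)\;\leq\;\inf\{\|z'\|^d:z'\in\overline{U_K.z}\}\;=\;O_d\bigl(D_K^{d(1/2-C/C_0)}\bigr),$$
which, after enlarging $C$ once more and using the $d$-dependent Minkowski lower bound on $D_K$, is strictly less than $2^{-d}$.

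For part (2), I would first appeal to Proposition \ref{IEmin} to restrict the supremum $M(K)$ to rational arguments; every such element lies in $q^{-1}\cO_K$ for some minimal $q\in\bN$. Corollary \ref{finiteres} (with $G=U_K$ and the specified $R$), combined with Lemma \ref{toralmin}(4), identifies $m_\Kb(x)$ with the displayed minimum over $(U_K.x+\cO_K)\cap B_R$ (the infimum being attained on the finite set $q^{-1}\cO_K\cap B_R$). Let $M_0$ denote the right-hand side of the claimed identity; trivially $M_0\leq M(K)$. For the reverse inequality, I would exhibit the explicit point $x=\tfrac12\in\tfrac12\cO_K$: for each $a\in\cO_K$ the element $1+2a\in\cO_K$ is nonzero (since $-\tfrac12\notin\cO_K$), so $|N_K(1+2a)|\geq 1$ and therefore $m_\Kb(\tfrac12)=2^{-d}$; as $2<Q$, this forces $M_0\geq 2^{-d}$. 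Finally, if some rational $x$ satisfied $m_\Kb(x)>M_0$, its minimal order $q^*$ would necessarily satisfy $q^*\geq Q$ (otherwise $x$ is already counted in the sup), and then part (1) would force $m_\Kb(x)<2^{-d}\leq M_0$, a contradiction; hence $M_0=M(K)$.

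The main obstacle I anticipate is the bookkeeping in part (1): one must simultaneously verify that the triple-exponential $Q$ dominates the triple-exponential threshold of Proposition \ref{effBerend}, and that the density gain, once pulled back through $\psi$ and raised to the $d$-th power via Lemma \ref{toralmin}(3), still beats the absolute constant $2^{-d}$. Both reductions rest on absorbing $\cM_\psi$, $\|\psi\|$ and $\log\cF_{U_K}$ into polynomial factors of $D_K$ via Lemma \ref{idealreg} and (\ref{heightbd}), which is precisely what permits the final constant $C$ in the statement to depend only on $d$.
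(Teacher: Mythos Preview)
Your proposal is correct and follows essentially the same route as the paper: Lemma \ref{idealreg} to control $\cM_\psi$ and $\|\psi\|$, a choice of $C$ absorbing both thresholds of Proposition \ref{effBerend} via (\ref{heightbd}), density transported through $\psi$ and converted to a norm bound by Lemma \ref{toralmin}(3), and for part (2) the lower bound $M(K)\geq m_K(\tfrac12)\geq 2^{-d}$ combined with Corollary \ref{finiteres}. Your treatment is in places slightly more explicit than the paper's (you compute $m_\Kb(\tfrac12)=2^{-d}$ exactly rather than invoking $m_K(\tfrac12)\in 2^{-d}\bZ_{>0}$, and you justify the ``$\min$'' via finiteness of $(U_K.x+\cO_K)\cap B_R$ for rational $x$), but the argument is the same.
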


\begin{proof}(1) By Lemma \ref{idealreg}, there is an isomorphism $\psi$ from $\bR^d$ to $\Kb$ that sends $\bZ^d$ to $\cO_K$, such that $\cM_\psi=O_d(D_K^{\frac{d-1}{2d}})$. Then by (\ref{distortion}), \begin{equation}\|\psi\|\leq\cM_\psi\cdot\Vol(\Kb/\cO_K)^\frac1d=\cM_\psi D_K^\frac1{2d}=O_d(D_K^{\frac12}).\end{equation}

Following this estimate and the inequality (\ref{heightbd}), when $C$ is large enough, \begin{equation}D_K^{C\cF_{U_K}^2}\geq\max\big(C_0\cM_\psi^{30d},\max(\cF_{U_K},2)^{C_0\cF_{U_K}^2}\big)\end{equation} where $C_0$ is the constant from Lemma \ref{effBerend}.

Therefore if $Q\geq\exp\exp\exp(D_K^{C\cF^2_{U_K}})$ and $z$ is as in the statement, then it follows from the lemma that there exists $u\in U_K$ such that the preimage $\psi^{-1}(uz)$ lies within distance less than $(\log\log\log Q)^{-C_0^{-1}\cF_{U_K}^{-2}}$ from the origin in $\bT^d$. So \begin{equation}\|uz\|<\|\psi\|(\log\log\log Q)^{-C_0^{-1}\cF_{U_K}^{-2}}
\ll_d D_K^{\frac12} D_K^{-\frac{C}{C_0}}.\end{equation} In particular, by Lemma \ref{toralmin}.(3),
\begin{equation}m_{\Kb/\cO_K}(z)<\big(\cM_\psi D_K^{-\frac{C}{C_0}}\big)^d=O_d\big((D_K^{-\frac{C}{C_0}+\frac12})^d\big).
\end{equation}
Because $D_K\geq 2$, if $C$ is sufficiently large (depending only on $d$) then $m_{\Kb/\cO_K}(z)<2^{-d}$.

(2) Notice first that $m_K(\frac12)$ clearly doesn't vanish and belongs to $2^{-d}\bZ$ by the proof of Lemma \ref{toralmin}.(4). Thus $M(K)\geq m_K(\frac12)\geq 2^{-d}$.

Then for all $x$ with denominator  greater than or equal to $Q$, its projection $z$ in $\Kb/\cO_K$ is a rational point as required by (1). Thus $m_K(x)=m_\Kb(x)=m_{\Kb/\cO_K}(z)< 2^{-d}\leq M(K)$. It follows from this fact and definition that $M(K)$ is the supremum of $m_K(x)$ where $x$ has a small denominator:
\begin{equation}M(K)=\sup\{m_K(x):x\in\bigcup_{q\in\bN, 2\leq q<Q} q^{-1}\cO_K\}.\end{equation}
$q$ is supposed to be at least $2$ since the $q=1$ case is not interesting where $x\in\cO_K$ and $m_K(x)=0$.
It suffices to apply Corollary \ref{finiteres} to complete the proof.\end{proof}

\begin{proof}[Proof of Theorem \ref{complexity}] By Corollary \ref{denombd}, to determine $M(K)$ it suffices to calculate and compare $|N_K(x')|$ for all $x'$ from the union $A$ of those $\Omega_x$'s where $x\in\bigcup_{q\in\bN, q<Q} q^{-1}\cO_K$ and $\Omega_x=(U_K.x+\cO_K)\cap B_R$ with $R=D_K^{\frac1d}e^{\frac12r\cF_{U_K}}$. Notice the $\Omega_x$'s are all finite, and are either equal or disjoint for different $x$'s. So $A$ can be regarded as a disjoint union and every $|N_K(x')|$ comes only once into the comparison.

Since every element from $U_K.x+\cO_K$ has the same denominator as $x$, $A\subset\bigcup_{\substack{q\in\bN\\ q<Q}} q^{-1}\cO_K$. So $A=\bigcup_{\substack{q\in\bN\\ q<Q}}A^{(q)} $ where $A^{(q)}=A\cap q^{-1}\cO_K$.

Moreover, $A$ is clearly inside the box $B_R$. So $A^{(q)}\subset q^{-1}\cO_K\cap B_R$.

For two distinct elements $x',y'\in A^{(q)}$, $x'-y'\in q^{-1}\cO_K$. So one can deduce $|N_K(x'-y')|\geq q^{-d}$ and it follows that for at least one $i\in I$ we have $|(x'-y')_i|\geq q^{-1}$. Therefore $\bigcup_{x'\in A^{(q)}}(x'+B_{\frac1{2q}})$ is a disjoint union. Furthermore, this union is covered by $B_{R+\frac1{2q}}$.  Hence one an easily see that \begin{equation}|A^{(q)}|\leq\frac{\Vol(B_{R+\frac1{2q}})}{\Vol(B_{\frac1{2q}})}=\left(\dfrac{R+\frac1{2q}}{\frac1{2q}}\right)^d=(2qD_K^{\frac1d}e^{\frac12r\cF_{U_K}}+1)^d.\end{equation}

Because $D_K>1$ and $q\in\bN$, $qD_K^{\frac1d}e^{\frac12r\cF_{U_K}}>1$. Therefore

\begin{equation}\begin{split}|A|\leq&\sum_{q\in\bN, q<Q}(3qD_K^{\frac1d}e^{\frac12r\cF_{U_K}})^d\leq Q^{d+1}(3D_K^{\frac1d}e^{\frac12r\cF_{U_K}})^d\\
\leq &\exp\Big((d+1)\exp\exp(D_K^{C_0\cF^2_{U_K}})\Big)(3D_K^{\frac1d}e^{\frac12r\cF_{U_K}})^d\\
\leq &\exp\exp\exp(D_K^{C_1\cF^2_{U_K}}),
\end{split}\end{equation}
where $C_0$ is the constant denoted by $C$ in Corollary \ref{denombd} and $C_1$ is a larger constant, chosen in a way that depends only on $d$, which is possible because $D_K\geq 4$, $\cF_{U_K}\gg_d 1$ and $r\leq d-1$.

Therefore, one only needs to compute and compare the algebraic norms of at most $\exp\exp\exp(D_K^{C_1\cF^2_{U_K}})$ numbers from $A\subset K$. For each $x\in A$, its denominator is bounded by $Q$ and all its archimedean embeddings are bounded by $R=D_K^{\frac1d}e^{\frac12r\cF_{U_K}}$. It is known (see e.g. \cite{B04}) that the complexity of computing the algebraic norm of such a number is polynomial in $\log Q+\log R$, which is $O_d(\exp\exp(D_K^{(C_0+\epsilon)\cF^2_{U_K}})$ for any $\epsilon>0$.  Other operations needed in the computation only require relatively cheap costs, for example a set of fundamental units can be determined with complexity $O_d(D_k^{\frac14})$ by \cite{FJ10}. Hence the total complexity of computing $M(K)$ is bounded by $\exp\exp\exp(D_K^{C\cF^2_{U_K}})$ for a constant $C$ slightly larger than $C_1$.
\end{proof}

\section{Euclidean Spectra of CM fields}

The dynamics become very different for CM fields, in whose cases the action by $U_K$ on $\Kb/\cO_K$ is essentially not irreducible any more. To see this, let $F$ be a maximal totally real subfield of $K$, that is, a totally real subfield over which $K$ is a totally complex quadratic extension. Recall that $U_F$ is a finite index subgroup of $U_K$. $F$ is a $U_F$-invariant subspace of the $\bQ$-vector space $K$ and the $s$-dimensional real subspace $\Fb=F\otimes_\bQ\bR\subset\Kb$ projects to a subtorus $T$ of $\Kb/\cO_K$ that is invariant under the multiplicative action by $U_F$. So the action by any element of $U_F$ on $\Kb/\cO_K$ cannot be an irreducible toral automorphism. Since $U_F$ is of finite index, it follows that no element of $U_K$ acts totally irreducibly. Hence the $U_K$ action on $\Kb/\cO_K$ doesn't satisfy the total irreducibility condition in Theorem \ref{Berend}. In consequence, orbits of irrational points or of rational points with large denominators don't have to be close to the origin (see also \cite{C06}*{Remark 3}).

In this section, let $K$ be a CM field, $F$ be the associated maximal totally real subfield and $\Fb=F\otimes_\bQ\bR$. Then $r_1=0$, $d=2r_2$. For simplicity denote $s=r_2$, then $\deg F=s$ and $\deg K=d=2s$. $K$ has $s$ pairs of imaginary embeddings $(\sigma_1,\sigma_{s+1})$, $\cdots$, $(\sigma_s,\sigma_{2s})$ and $F$ has $s$ real embeddings $\tau_1,\cdots,\tau_s$. For all $i\in I=\{1,\cdots,s\}$, the restrictions of $\sigma_i$ and $\sigma_{s+i}$ to $F$ both coincide with $\tau_i$. Moreover, both $U_F$ and $U_K$ have rank $r=s-1$.

\subsection{Product structure of $\Kb$}

We hope to follow the same strategy as before by looking at the action on $\Kb/\cO_K$ by some subgroup of $U_K$. As $U_F$ coincide with $U_K$ up to finite index, it would be helpful if $\Kb/\cO_K$ has a product structure with respect to the $s$-dimensional subtorus $\Fb/\cO_F$. However, the existence of such a product structure is not clear and therefore instead of $\Kb/\cO_K$ we will work on a finite cover of it which splits as a product.

Fix an element $\eta\in\cO_K$ such that $\eta\notin\cO_F$. Then $K=F\oplus\eta F$ and $\Kb=\Fb\oplus\eta\Fb$. Define a finite-index sublattice in $\cO_K$ by $\Gamma=\cO_F\oplus\eta\cO_F$. Clearly $\Gamma$ is invariant under multiplication by elements of $U_F$. Hence $U_F$ naturally acts on $\Kb/\Gamma$.

$\Kb/\Gamma$ is isomorphic to $(\Fb/\cO_F)^2$. Actually, there is a unique isomorphism $\rho$ that sends each $x\in\Kb$ to $\rho(x)=\big(\rho^{(1)}(x),\rho^{(2)}(x)\big)\in \Fb^2$ in such a way that \begin{equation}\label{etadecomp}x=\rho^{(1)}(x)+\eta\rho^{(2)}(x).\end{equation} In addition, $x$ belongs to $\Gamma$ if and only if $\rho(x)\in \cO_F^2$. Therefore the map $\rho$ induces a continuous isomorphism between $\Kb/\Gamma$ and $(\Fb/\cO_F)^2$, which is denoted indifferently by $\rho=(\rho^{(1)},\rho^{(2)})$.

Since $\Gamma\subset\cO_K$ is of finite index, $\Kb/\Gamma$ is a finite cover of $\Kb/\cO_K$. By writing $\pi_{\Gamma,\cO_K}$ for the corresponding finite-to-one projection, $\pi_{\cO_F^2}$ for the natural projection from $\Fb^2$ to $(\Fb/\cO_F)^2$, and $\pi_\triangle$ for $\pi_{\Gamma,\cO_K}\circ\rho^{-1}$, we complete the following commutative diagram:

\newcommand{\curvedarrow}[1]{%
\setlength{\unitlength}{0.03\DiagramCellWidth}
\begin{picture}(0,0)(0,0)
\qbezier(20,-50)(-18,7)(20,64)
\qbezier(19,-42)(19.5,-46)(20,-50)
\qbezier(14,-46)(17,-48)(20,-50)
\put(-8,8){\makebox(0,0)[t]{$\scriptstyle {#1}$}}
\end{picture}
}

\begin{equation}\label{commdiag}
\begin{diagram}
&\Kb &\rTo^\sim_\rho &\Fb^2\\
&\dTo_{\pi_\Gamma} & &\dTo_{\pi_{\cO_F^2}}\\
\curvedarrow{\pi_{\cO_K}}&\Kb/\Gamma &\rTo^{\sim\ \ \ }_{\rho\ \ \ } &(\Fb/\cO_F)^2\\
&\dTo_{\pi_{\Gamma,\cO_K}} &\ldTo_{\pi_\triangle}&\\
&\Kb/\cO_K &&\\
\end{diagram}
\end{equation}

As $\rho$ is an isomorphism, $\pi_\triangle$ is also a finite covering map.

A point in one of the spaces in the diagram is said to be {\bf rational} if it either sits in $K\subset\Kb$ or $F^2\subset\Fb^2$, or descends from such a point. If a point is rational, then so are all its images and preimages in the diagram. In the tori $\Kb/\Gamma$, $(\Fb/\cO_F)^2$, and $\Kb/\cO_K$, rational points are exactly the torsion points.

Without causing ambiguity, subscript $i$ will indicate the $i$-th coordinates in both $\Kb$ and $\Fb$, which correspond respectively to the embeddings $\sigma_i$ of $K$ and $\tau_i$ of $F$. Notice $x_i$ is complex for $x\in\Kb$ but $y_i$ is real for $y\in\Fb$.

The decomposition (\ref{etadecomp}) can be expressed easily in terms of the coordinates:\begin{equation}\label{coorddecomp}x_i=\big(\rho^{(1)}(x)\big)_i+\eta_i\big(\rho^{(2)}(x)\big)_i, \forall x\in\Kb, \forall i\in I.\end{equation} Note $\eta_i\in\bC$ but $(\rho^{(1)}(x)\big)_i,\big(\rho^{(2)}(x)\big)_i\in\bR$. Furthermore, because $\rho$ is an isomorphism, we must have \begin{equation}\label{etaimag}\eta_i\notin\bR, \forall i\in I.\end{equation}

We rewrite the norm in $\Kb$:
\begin{equation}\label{normdecomp}\begin{split}N_\Kb(x)=&\prod_{i=1}^s|x_i|^2=\prod_{i=1}^s\Big|\big(\rho^{(1)}(x)\big)_i+\eta_i\big(\rho^{(1)}(x)\big)_i\Big|^2= N_*\big(\rho(x)\big)\end{split}\end{equation}

Where $N_*$ is the functional on $\Fb^2\cong(\bR^s)^2$ defined by
\begin{equation}\label{rhonorm}\begin{split}
N_*(y^{(1)},y^{(2)})=&\prod_{i=1}^s\left(\big(y^{(1)}_i+(\re{\eta_i})y^{(2)}_i\big)^2+(\im{\eta_i})^2(y^{(2)}_i)^2\right).
\end{split}\end{equation}
By (\ref{etaimag}), each factor in the product is a positive definite quadratic polynomial in $y^{(1)}_i$ and $y^{(2)}_i$. In particular, $N_\Kb$ and $N_*$ are always non-negative.

Let $U_F$ act diagonally both on $\Fb^2$ and on $(\Fb/\cO_F)^2$: given $u\in U_F$, for $y=(y^{(1)},y^{(2)})\in\Fb^2$, $uy$ will stand for $(uy^{(1)},uy^{(2)})$ and similarly on $(\Fb/\cO_F)^2$.

\begin{remark}\label{CDaction}We have made $U_F$ act on all the spaces in the Diagram (\ref{commdiag}). Since all the actions descend from the multiplicative action (\ref{Kbarmulti}) on $\Kb$, the $U_F$-actions commute with the maps in the diagram.\end{remark}

As $F$ is totally real, it is in particular not CM. Therefore by Lemma \ref{UKCartan}, the $U_K$-action on $\Fb/\cO_F$ is Cartan and contains a totally irreducible element. The same are true for the restriction of the action to any finite-index subgroup $G\subset U_F$.

\subsection{Rigidity of the diagonal action}

Note for each $\varphi\in F$, the subset \begin{equation}V^\varphi=\{(y^{(1)},y^{(2)})\in\Fb^2:y^{(1)}=\varphi y^{(2)}\},\end{equation} is an $s$-dimensional subspace of $\Fb^2$. For $\varphi=\infty$, let \begin{equation}V^\infty=\{(y^{(1)},y^{(2)})\in\Fb^2:y^{(2)}=0\}.\end{equation}

\begin{definition}
An {\bf $s$-dimensional homogeneous $G$-invariant subtorus} of $(\Fb/\cO_F)^2$ is the projection of $V^\varphi$ to $(\Fb/\cO_F)^2$ for some $\varphi\in F\cup\{\infty\}$, denoted by $T^\varphi$.
\end{definition}

It is not hard to check $T^\varphi$ is indeed an $s$-dimensional subtorus (\cite{LW10}*{Lemma 3.3}). Moreover, $V^\varphi$ is clearly invariant under the action by $U_F$, and hence so is $T^\varphi$.

Fix from now on a subgroup $G\subset U_F$ such that $G\cong\bZ^r$. Then $G$ is of finite index in $U_F$, and thus in $U_K$ as well. Restrict the action $U_F\curvearrowright(\Fb/\cO_F)^2$ to $G$.

\begin{definition}
An {\bf $s$-dimensional homogeneous $G$-invariant subset} of $(\Fb/\cO_F)^2$ is a subset of the form $G.(T^\varphi+\goz)=\{g.(\goz'+\goz):g\in G,\goz'\in T^\varphi\}$ for some fixed $\varphi\in F\cup\{\infty\}$ and rational point $\goz\in (F/\cO_F)^2$. We call $\varphi$ the {\bf slope} of $G.(T^\varphi+\goz)$.

An $s$-dimensional homogeneous $G$-invariant subset of slope $\varphi$ in $\Kb/\cO_K$ is the projection of such a subset in $(\Fb/\cO_F)^2$ by $\pi_\triangle$.

\end{definition}

Clearly every $s$-dimensional homogeneous $G$-invariant subset is indeed invariant under the $G$-action.

We list a few basic properties of $s$-dimensional homogeneous $G$-invariant subsets :

\begin{lemma}\label{homofacts}  Let $K$, $F$, $\Gamma$ and $G$ be as above. Suppose the unit rank of $K$ and $F$ is $r\geq 2$, then the following claims hold in both $(\Fb/\cO_F)^2$ and $\Kb/\cO_K$: \begin{enumerate}[(1)]
\item Suppose $L$ is an $s$-dimensional homogeneous $G$-invariant subset of slope $\varphi\in F\cup\{\infty\}$ where $s=\deg F=r+1$. Then $L$ is a finite disjoint union $\bigcup_{k=1}^h T_k$ where each $T_k$ is a translate of the subtorus $T^\varphi$ (resp. the subtorus $\pi_\triangle(T^\varphi)$ in $\Kb/\cO_K$) by a rational point;
\item Let $L$ be as in (1). For a point in $L$, its $G$-orbit is either finite or dense in $L$ depending on whether the point is rational or not;
\item Any infinite $G$-invariant closed subset contains at least one $s$-dimensional homogeneous $G$-invariant subset;
\item Given $\epsilon>0$, there are only finitely many $s$-dimensional homogeneous $G$-invariant subsets that fail to be $\epsilon$-dense in $(\Fb/\cO_F)^2$ (resp. $\Kb/\cO_K$).
\end{enumerate}
\end{lemma}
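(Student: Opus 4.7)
The plan is to derive each part from the non-CM rigidity on the factor $\Fb/\cO_F$ together with the classification for the diagonal $G$-action from \cite{LW10}. For (1), since $T^\varphi$ is itself $G$-invariant, one has $g.(T^\varphi+\goz)=T^\varphi+g.\goz$, so $L=\bigcup_{g\in G}(T^\varphi+g.\goz)$. Because $\goz$ is rational, hence a torsion element of $(\Fb/\cO_F)^2$, its $G$-orbit is finite; the translates $T^\varphi+g.\goz$ are either equal or disjoint as cosets of $T^\varphi$, which gives the decomposition in $(\Fb/\cO_F)^2$, and the statement in $\Kb/\cO_K$ follows by pushing forward through the finite covering $\pi_\triangle$.

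For (2), I would identify each translate $T^\varphi+g_0.\goz$ with $\Fb/\cO_F$ via the coordinate projection dual to $\varphi$, namely $(y^{(1)},y^{(2)})\mapsto y^{(2)}$ (or $\mapsto y^{(1)}$ when $\varphi=\infty$); under this identification the action of the finite-index stabilizer $G_0\subset G$ of $g_0.\goz$ becomes, up to the translation, the standard multiplicative action of $G_0$ on $\Fb/\cO_F$. Since $F$ is totally real and hence non-CM of the same unit rank $r\geq 2$, Lemma~\ref{UKCartan} applies to $G_0$ and Theorem~\ref{Berend} yields the finite-or-dense dichotomy inside each translate. Combined with (1), a non-rational point has dense $G$-orbit in all of $L$, while a rational point has finite orbit.

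For (3), I would invoke the Lindenstrauss--Wang orbit-closure classification (Proposition~\ref{joining}): if $Y$ contains any non-rational point $y$, then $\overline{G.y}$ is either an $s$-dimensional homogeneous $G$-invariant subset, or the whole torus (which itself contains such subsets), and $Y$ contains one. If every point of $Y$ is rational, then $Y$ is an infinite closed union of finite $G$-orbits; picking distinct $y_n\in Y$ and noting that torsion points of any bounded order form a finite subset of the compact torus, we may assume the orders of $y_n$ tend to infinity. Projecting to each factor via $\rho^{(i)}$ and applying effective rigidity for the non-CM field $F$ (Proposition~\ref{effBerend}) to the images, the orbits $G.y_n$ become asymptotically equidistributed in one of the two factors; a short argument using $G$-invariance of the fibers of $\rho^{(i)}$ above each torsion point then produces a non-rational limit point inside $Y$, reducing to the previous case.

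For (4), parametrize the $s$-dimensional homogeneous $G$-invariant subsets by pairs $(\varphi,[\goz])$, where $\varphi\in F\cup\{\infty\}$ and $[\goz]$ is a $G$-orbit of rational cosets in the quotient $s$-torus $(\Fb/\cO_F)^2/T^\varphi$. Writing $\varphi=\alpha\beta^{-1}$ with $\alpha,\beta\in\cO_F$ and using that $V^\varphi\cap\cO_F^2$ is controlled arithmetically by the ideals $(\alpha),(\beta)$, a Minkowski-type argument shows that $T^\varphi$ itself is $\frac{\epsilon}{2}$-dense whenever the arithmetic complexity of $\varphi$ exceeds a bound depending only on $\epsilon$ and $K$, leaving only finitely many slopes to consider. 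For each such slope, the quotient $s$-torus $(\Fb/\cO_F)^2/T^\varphi$ contains only finitely many $G$-orbits of rational cosets whose minimum representative fails to be $\frac{\epsilon}{2}$-close to the origin, yielding (4). The main obstacle I foresee is the all-rational subcase of (3); my intended workaround is to descend $Y$ to each factor of the product and exploit the non-CM effective rigidity on $\Fb/\cO_F$ so as to avoid circular use of Proposition~\ref{joining}.
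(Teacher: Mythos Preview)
The paper's own proof is a one-line citation: all four claims in $(\Fb/\cO_F)^2$ are taken from \cite{LW10}*{\S3--\S4}, and the $\Kb/\cO_K$ versions follow by pushing through the finite cover $\pi_\triangle$. So there is no detailed argument to compare against; your proposal is effectively an attempt to reconstruct what \cite{LW10} does. Parts (1) and (2) of your sketch are fine and are essentially the right arguments.

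Part (3), however, has a real problem. You propose to invoke Proposition~\ref{joining} to handle the case where $Y$ contains an irrational point. This is doubly wrong: first, Proposition~\ref{joining} is stated only for $r\geq 3$, whereas Lemma~\ref{homofacts} is claimed for $r\geq 2$, so you would not cover the $r=2$ case at all. Second, and more seriously, in \cite{LW10} the statement corresponding to (3) is one of the structural ingredients \emph{used to prove} the full orbit-closure classification (Theorems~3.15--3.16 there, i.e.\ Proposition~\ref{joining} here), not a consequence of it; invoking Proposition~\ref{joining} here is circular. The correct route in \cite{LW10} is a direct argument: from any accumulation point of $Y$ one extracts nonzero elements of $Y-Y$ tending to $0$, and then uses the hyperbolic structure of the $G$-action (expansion along eigendirections, combined with Berend on the factor $\Fb/\cO_F$) to manufacture an entire translate of some $T^\varphi$ inside $\overline{Y}$. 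This works uniformly for $r\geq 2$ and does not need the classification.

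Your fallback for the all-rational subcase is also not convincing as written. You claim that effective density on a factor ``produces a non-rational limit point inside $Y$,'' but if $Y$ is assumed to consist only of rational points, every limit point of $Y$ is in $Y$ and hence rational; equidistribution of the $G.y_n$ in a factor does not by itself force an irrational point into $Y$. The actual mechanism, again, is the differences-plus-expansion argument above, which sidesteps the rational/irrational dichotomy entirely. Your sketch for (4) is in the right spirit but would need the precise covolume estimates for $V^\varphi\cap\cO_F^2$ carried out in \cite{LW10}; as stated it is plausible but not a proof.
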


\begin{proof} In the $(\Fb/\cO_F)^2$ setting, all the properties above can be found in \cite{LW10}*{\S3 \& \S4}.

The corresponding statements for $\Kb/\cO_K$ immediately follow, thanks to the facts that the $G$-action commutes with Diagram \ref{commdiag} and that $\pi_\triangle$ is a finite-to-one continuous group morphism between two tori.
\end{proof}

The major new ingredient in our analysis is the classification in \cite{LW10} of all infinite proper $G$-invariant closed subsets in $(\Fb/\cO_F)^2$. When the unit rank is strictly greater than $2$, all of those are $s$-dimensional homogeneous $G$-invariant closed subsets:

\begin{proposition}\label{joining} Let $K$, $F$ and $G$ be as above. If the unit rank $r$ is at least $3$, then the following are true in both $(\Fb/\cO_F)^2$ and $\Kb/\cO_K$: \begin{enumerate}[(1)]
\item Every $G$-orbit closure is either a finite orbit consisting of rational points, or the whole space $(\Fb/\cO_F)^2$ (resp. $\Kb/\cO_K$), or a $s$-dimensional homogeneous $G$-invariant subset;
\item For all $\epsilon>0$, there is a finite union of $s$-dimensional homogeneous $G$-invariant subsets that contains all the rational points in $(\Fb/\cO_F)^2$ (resp. in $\Kb/\cO_K$) whose orbit fail to be $\epsilon$-dense.
\end{enumerate}
\end{proposition}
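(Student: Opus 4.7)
The plan is to establish both statements in $(\Fb/\cO_F)^2$ by invoking the orbit-closure classification of \cite{LW10}, and then transfer them to $\Kb/\cO_K$ along the finite $G$-equivariant cover $\pi_\triangle$ from Diagram~\eqref{commdiag} (cf.\ Remark~\ref{CDaction}).

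For (1) in $(\Fb/\cO_F)^2$, since $F$ is totally real and hence non-CM of unit rank $r\geq 3$, Lemma~\ref{UKCartan} applied to $F$ shows that the multiplicative action of $G$ on $\Fb/\cO_F\cong\bT^s$ is Cartan and contains a totally irreducible element, so the diagonal $G$-action on $(\Fb/\cO_F)^2$ fits precisely the setting of \cite{LW10}. The classification of proper $G$-invariant closed subsets proved there then shows that every $G$-orbit closure is the whole space, a finite orbit, or an $s$-dimensional homogeneous $G$-invariant subset. Moreover, any finite orbit $G.z$ consists of torsion points: $\Stab_G(z)$ has finite index in $G\cong\bZ^r$ and hence still has rank $r\geq 1$, so it contains some $g$ of infinite order; the congruence $(g-1)\tilde z\in\cO_F^2$ (for any lift $\tilde z\in\Fb^2$), together with the fact that multiplication by the nonzero element $g-1\in F$ is invertible on each copy of $\Fb$ (injectivity of embeddings forces all real embeddings of $g-1$ to be nonzero), forces $\tilde z\in F^2$.

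For (2) in $(\Fb/\cO_F)^2$, I will show that every rational point $z\in(F/\cO_F)^2$ lies in some $s$-dimensional homogeneous $G$-invariant subset; granting this, the statement follows at once from Lemma~\ref{homofacts}(4). Pick a rational lift $\tilde z=(\tilde z^{(1)},\tilde z^{(2)})\in F^2$, choosing $\tilde z^{(2)}=0$ whenever $z^{(2)}=0$ in $F/\cO_F$. If $\tilde z^{(2)}=0$ then $\tilde z\in V^\infty$ and $z\in T^\infty$; otherwise set $\varphi=\tilde z^{(1)}/\tilde z^{(2)}\in F$, giving $\tilde z\in V^\varphi$ and $z\in T^\varphi$. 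Either way $T^\varphi$ is itself an $s$-dimensional homogeneous $G$-invariant subset, and $G$-invariance forces $\overline{G.z}\subseteq T^\varphi$; so if $\overline{G.z}$ fails to be $\epsilon$-dense then neither does $T^\varphi$. Lemma~\ref{homofacts}(4) thus confines every such $z$ to one of the finitely many non-$\epsilon$-dense $s$-dimensional homogeneous subsets.

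To transfer to $\Kb/\cO_K$, I use that $\pi_\triangle$ is a finite continuous surjection between compact spaces commuting with the $G$-actions, so $\overline{G.z}=\pi_\triangle(\overline{G.\tilde z})$ for any $z\in\Kb/\cO_K$ and any lift $\tilde z\in(\Fb/\cO_F)^2$. For (1), the trichotomy upstairs maps to the trichotomy downstairs: a finite rational orbit has finite rational image (rationality is defined to be preserved through Diagram~\eqref{commdiag}), the whole cover surjects onto $\Kb/\cO_K$, and an $s$-dimensional homogeneous subset upstairs maps to an $s$-dimensional homogeneous subset downstairs by definition. For (2), $\pi_\triangle$ is a local isometry with some positive injectivity radius $\delta$, so if $\overline{G.z}$ misses an $\epsilon$-ball in $\Kb/\cO_K$ then $\overline{G.\tilde z}$ misses an $\epsilon'$-ball with $\epsilon'=\min(\epsilon,\delta/2)$ around any chosen preimage of its center in $(\Fb/\cO_F)^2$; applying (2) upstairs with parameter $\epsilon'$ and pushing the resulting finite union of $s$-dimensional homogeneous subsets forward through $\pi_\triangle$ yields the required finite union in $\Kb/\cO_K$. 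The serious dynamical work is entirely packaged inside \cite{LW10}; what remains is the Cartan verification via Lemma~\ref{UKCartan} and the bookkeeping through the finite cover, both of which are routine.
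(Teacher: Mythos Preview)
Your handling of part~(1), including the verification that finite orbits consist of rational points, and the transfer of both parts through the finite cover $\pi_\triangle$, is correct and matches the paper's approach (the paper simply cites \cite{LW10}*{Theorems 3.15 \& 3.16} and observes that the statements descend to $\Kb/\cO_K$).

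The gap is in your argument for~(2) on $(\Fb/\cO_F)^2$. You correctly observe that every rational point $z$ lies on some subtorus $T^\varphi$ through the origin, but the step ``if $\overline{G.z}$ fails to be $\epsilon$-dense then neither does $T^\varphi$'' is a non sequitur: the inclusion $\overline{G.z}\subseteq T^\varphi$ runs the wrong way for that conclusion. A finite $G$-orbit can perfectly well sit inside an $\epsilon$-dense $T^\varphi$, and nothing in your construction forces the particular slope $\varphi=\tilde z^{(1)}/\tilde z^{(2)}$ you select to land among the finitely many non-$\epsilon$-dense homogeneous subsets supplied by Lemma~\ref{homofacts}(4). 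A telltale sign that something is wrong is that your argument invokes only Lemma~\ref{homofacts}(4), which already holds for $r\geq 2$; if it were valid it would prove~(2) without the rank-$3$ hypothesis. In fact~(2) is a nontrivial equidistribution statement that requires the full orbit-closure classification of \cite{LW10} under $r\geq 3$, and the paper accordingly does not attempt an elementary proof but simply cites \cite{LW10}*{Theorem 3.16}. You should do the same rather than try to shortcut it through Lemma~\ref{homofacts}(4).
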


\begin{proof} Since $F$ is not CM, the proposition was proved for $(\Fb/\cO_F)^2$ in \cite{LW10}*{Theorem 3.15 \& 3.16}. Again this directly imply the same claims in $\Kb/\cO_K$.
\end{proof}

\subsection{Localized spectrum on invariant subsets}

We reduce the description of $\Spec(\Kb)$ (and that of $\Spec(K)$) to the study of the behavior of $N_*$ on certain affine subspaces of $\Fb^2$.

\begin{lemma}\label{affsub} Suppose $L\subset \Kb/\cO_K$ is an $s$-dimensional homogeneous $G$-invariant subset of slope $\varphi$. Then there is a finite subset $\Omega_L\subset F^2$ such that
\begin{equation}V^\varphi+\omega\subset\pi_{\cO_F^2}^{-1}\big(\pi_\triangle^{-1}(L)\big), \forall \omega\in\Omega_L\end{equation}
and for all $z\in L$, \begin{equation}\label{affsubstatement}m_{\Kb/\cO_K}(z)=\min_{\omega\in\Omega_L}\inf\big\{N_*(y):y\in\pi_{\cO_F^2}^{-1}\big(\pi_\triangle^{-1}(G.z)\big)\cap(V^\varphi+\omega)\big\}.\end{equation}
\end{lemma}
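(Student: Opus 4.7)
The plan is to lift the problem to $\Fb^2$, where $|N_\Kb|$ becomes the explicit function $N_*$ and the set $L$ decomposes into translates of $V^\varphi$, and then use the $G$-invariance of $N_*$ together with the universal bound $m_{\Kb/\cO_K}\leq M(K)$ to reduce an a priori infinite union of such translates to a finite list.

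First, by Lemma \ref{homofacts}(1), $L$ is a finite disjoint union of rational translates of $\pi_\triangle(T^\varphi)$. Consequently, $\pi_{\cO_F^2}^{-1}(\pi_\triangle^{-1}(L))\subset\Fb^2$ is a disjoint union of $V^\varphi$-translates $V^\varphi+\omega$ with $\omega\in F^2$, whose equivalence classes modulo $V^\varphi$ form an index set $\widetilde{\Omega}_L$. Under the identification $\Fb^2/V^\varphi\cong\Fb$ given by $(y^{(1)},y^{(2)})\mapsto y^{(1)}-\varphi y^{(2)}$ (or by $y^{(2)}$ if $\varphi=\infty$), this $\widetilde{\Omega}_L$ is a finite union of cosets in $F$ of a fractional ideal of $\cO_F$. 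Combining the identity $|N_\Kb(x)|=N_*(\rho(x))$ from (\ref{normdecomp}) with Lemma \ref{toralmin}(1) to replace the fibre over $z$ with the fibre over $G.z$ yields
\[
m_{\Kb/\cO_K}(z)=\inf_{\omega+V^\varphi\in\widetilde{\Omega}_L}\inf\bigl\{N_*(y):y\in\pi_{\cO_F^2}^{-1}\bigl(\pi_\triangle^{-1}(G.z)\bigr)\cap(V^\varphi+\omega)\bigr\},
\]
which is exactly the target identity once $\widetilde{\Omega}_L$ is replaced by a suitable finite subset $\Omega_L$.

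Second, $N_*$ is $G$-invariant because $|N_K(g)|=1$ for $g\in G\subset U_F\subset U_K$, and the diagonal $G$-action on $\Fb^2$ preserves both $V^\varphi$ and the preimage of $G.z$. Hence the inner infimum depends only on the $G$-orbit of $\omega+V^\varphi$ in $\widetilde{\Omega}_L$, so it suffices to include in $\Omega_L$ one representative per orbit whose contribution does not exceed $M(K)$. A direct calculation using (\ref{rhonorm}), minimizing each of the $s$ factors separately along $V^\varphi$, yields
\[
\inf_{y\in V^\varphi+\omega}N_*(y)=C(\varphi)\prod_{i=1}^s(\omega_{1,i}-\varphi_i\omega_{2,i})^2,
\]
and the analogous formula $C(\infty)\prod_i\omega_{2,i}^2$ when $\varphi=\infty$, with $C(\varphi)>0$ depending only on $\varphi$ and on the embeddings of $\eta$. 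Since $m_{\Kb/\cO_K}(z)\leq M(K)$, only those $\omega$'s whose image in $\Fb$ has $F$-norm at most $\sqrt{M(K)/C(\varphi)}$ in absolute value can possibly contribute.

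Finally, by Lemma \ref{compactness} applied to the non-CM field $F$ of unit rank $r\geq 2$, every element of $F^\times$ of bounded $F$-norm admits a $G$-translate inside a fixed bounded box in $\Fb$. This box meets the discrete set $\widetilde{\Omega}_L$ in only finitely many points, so only finitely many $G$-orbits of contributing $\omega$'s remain; choosing one lift in $F^2$ per orbit produces the desired finite set $\Omega_L$. The main technical step is to check that this orbit-counting is uniform in $z\in L$: both the rational case, where $\pi_{\cO_F^2}^{-1}(\pi_\triangle^{-1}(G.z))\cap(V^\varphi+\omega)$ is a discrete lattice coset of $\cO_F^2\cap V^\varphi$ on which $N_*$ attains its minimum, and the irrational case (where by Lemma \ref{homofacts}(2) the intersection is dense in $V^\varphi+\omega$, so that the inner infimum equals $\inf_{V^\varphi+\omega}N_*$), fit into the same framework, and the $M(K)$-bound driving the finiteness is independent of $z$.
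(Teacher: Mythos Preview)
Your argument is correct in outline but takes a genuinely different route from the paper's. The paper never computes $\inf_{V^\varphi+\omega}N_*$ here and does not pass to the quotient $\Fb^2/V^\varphi$. Instead it invokes Corollary~\ref{finiteres} directly for $K$ and the group $G\subset U_K$: this produces a fixed compact box $B\subset\Kb$ with
\[
m_{\Kb/\cO_K}(z)=\inf\{|N_\Kb(x')|:x'\in\pi_{\cO_K}^{-1}(G.z)\cap B\},
\]
and since the decomposition $\tilde L=\bigsqcup_{\omega}(V^\varphi+\omega)$ is locally finite, the compact set $\rho(B)$ meets only finitely many of the affine pieces, which defines $\Omega_L$. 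Your approach instead quotients by $V^\varphi$, computes the fibrewise minimum of $N_*$ explicitly (previewing Lemma~\ref{normhp}), exploits the $G$-invariance of the inner infimum to pass to $G$-orbits of $\omega$, and then applies Lemma~\ref{compactness} to $F$ rather than to $K$. The paper's path is shorter and entirely soft; yours is more explicit and makes the connection to Lemma~\ref{normhp} visible already at this stage.

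One point to tighten: your cutoff should be taken strictly larger than $M(K)$, say $2M(K)$. With the threshold exactly $M(K)$ there is an edge case: when $m_{\Kb/\cO_K}(z)=M(K)$ it is a priori possible that every $\omega$ has $\inf_{V^\varphi+\omega}N_*>M(K)$ (hence is discarded) while the full infimum $\inf_\omega I_\omega$ still equals $M(K)$, approached only from above. Replacing $M(K)$ by any fixed $T>M(K)$ removes this: since $m_{\Kb/\cO_K}(z)\leq M(K)<T$, for every $\epsilon\in(0,T-M(K))$ there is an $\omega$ with $I_\omega<m_{\Kb/\cO_K}(z)+\epsilon<T$, hence $\inf_{V^\varphi+\omega}N_*<T$, so $\omega$ survives the cutoff and the infimum over surviving $\omega$'s equals $m_{\Kb/\cO_K}(z)$. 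The rest of your finiteness argument (bounded $|N_F(\beta)|$, Lemma~\ref{compactness} for $F$, discreteness of $\widetilde\Omega_L$ in $\Fb$) is unaffected by this change.
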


\begin{proof}
By Lemma \ref{homofacts}.(1), $L$ is a disjoint union $\bigsqcup_{a\in A}\big(\pi_\triangle(T^\varphi)+a\big)$ where $A\subset \Kb/\cO_K$ is a
finite collection of rational points and $\varphi\in F\cup\{\infty\}$. Hence the lift $\tilde L=\pi_{\cO_F^2}^{-1}\big(\pi_\triangle^{-1}(L)\big)$ is the union of all sets of the form $V^\varphi+\omega$ where $\omega$ takes value in $\pi_{\cO_F^2}^{-1}\big(\pi_\triangle^{-1}(A))$. The $(V^\varphi+\omega)$'s are parallel $s$-dimensional affine subspaces in the vector space $\Fb^2$. For two different $\omega$'s, the two $(V^\varphi+\omega)$'s either coincide or are disjoint. Fix a subset $\Omega$ from $\pi_{\cO_F^2}^{-1}\big(\pi_\triangle^{-1}(A))$ such that $\tilde L=\bigsqcup_{\omega\in\Omega}(V^\varphi+\omega)$ is a disjoint union. Remark the union is locally finite in the sense that any compact set in $\Fb^2$ intersects only finitely many such affine subspaces.

$\tilde L$ is $G$-invariant as the $G$-action commutes with $\pi_\triangle$ and $\pi_{\cO_F^2}$.

Suppose $z\in L$ and let $x$ be an arbitrary point from $\pi_{\cO_K}^{-1}(z)$. By Corollary \ref{finiteres}, we see that \begin{equation}\label{finitereslift}\begin{split}m_{\Kb/\cO_K}(z)=&m_\Kb(x)=\inf_{x'\in (G.x+\cO_K)\cap B}|N_\Kb(x')|
\\=&\inf_{x'\in \pi_{\cO_K}^{-1}(G.z)\cap B}|N_\Kb(x')|,\end{split}\end{equation} where $B\subset\Kb$ is a compact subset that depends only on $K$ and $G$.

Notice $\pi_{\cO_K}^{-1}(G.z)\subset \pi_{\cO_K}^{-1}(L)=\rho^{-1}(\tilde L)$. Therefore, \begin{equation}\label{compactF2}\begin{split}\pi_{\cO_K}^{-1}(G.z)\cap B=&\pi_{\cO_K}^{-1}(G.z)\cap \big(\rho^{-1}(\tilde L)\cap B\big)\\
=&\rho^{-1}\Big(\pi_{\cO_F^2}^{-1}\big(\pi_\triangle^{-1}(G.z)\big)\cap\big(\tilde L\cap \rho(B)\big)\Big).\end{split}\end{equation}

It follows from local finiteness that there is a finite disjoint decomposition \begin{equation}\label{affsub0}\tilde L\cap \rho(B)=\bigsqcup_{\omega\in\Omega_L}\big((V^\varphi+\omega)\cap \rho(B)\big)\end{equation} where $\Omega_L$ is a finite subset of $\Omega$. For each $\omega\in\Omega_L$ the component $(V^\varphi+\omega)\cap\rho(B)$ is a compact region of the affine subspace $V^\varphi+\omega$.

From (\ref{finitereslift}), (\ref{compactF2}) and (\ref{affsub0}), one can deduce that:
\begin{equation}\label{affsub1}\begin{split}&m_{\Kb/\cO_K}(z)\\
=&\inf\Big\{N_*(y):y\in\pi_{\cO_F^2}^{-1}
\big(\pi_\triangle^{-1}(G.z)\big)\cap\big(\bigsqcup_{\omega\in\Omega_L}(V^\varphi+\omega)\big)\cap\rho(B)\Big\}\\
=&\min_{\omega\in\Omega_L}\inf\big\{N_*(y):y\in\pi_{\cO_F^2}^{-1}\big(\pi_\triangle^{-1}(G.z)\big)\cap(V^\varphi+\omega)\cap\rho(B)\big\}.\end{split}\end{equation}
Here we used the fact that $N_K=N_*\circ\rho$, as well as that $N_*$ is, by definition (\ref{rhonorm}), non-negative.

This obviously implies that the right-hand side is bounded by the left-hand side in (\ref{affsubstatement}). In the other direction,
\begin{equation}\label{affsub2}\begin{split}
&\min_{\omega\in\Omega_L}\inf\big\{N_*(y):y\in\pi_{\cO_F^2}^{-1}\big(\pi_\triangle^{-1}(G.z)\big)\cap(V^\varphi+\omega)\big\}\\
\geq&\inf\big\{N_*(y):y\in\pi_{\cO_F^2}^{-1}\big(\pi_\triangle^{-1}(G.z)\big)\big\}\\
=&\inf\big\{\big|N_K\big(\rho^{-1}(y)\big)\big|:y\in\pi_{\cO_F^2}^{-1}\big(\pi_\triangle^{-1}(G.z)\big)\big\}\\
=&\inf\big\{|N_\Kb(x)|:x\in G.\pi_{\cO_K}^{-1}(z)\big\}=\inf\big\{|N_\Kb(x)|:x\in \pi_{\cO_K}^{-1}(z)\big\}\\
=&m_{\Kb/\cO_K}(z).\end{split}\end{equation}
Again, we used the commutativity between Diagram (\ref{commdiag}) and the $G$-action, as well as the $G$-invariance of $|N_\Kb(\cdot)|$. The proof is completed.
\end{proof}

On each $V^\varphi+\omega$, we can analyze explicitly the functional $N_*$.
\begin{lemma}\label{normhp}For any rational point $\omega\in F^2$ and all $\varphi\in F\cup\{\infty\}$, the restriction of $N_*$ to the affine subspace $V^\varphi+\omega$ has a minimum which is achieved by a rational point. Unless $0\in V^\varphi+\omega$, the minimum value is positive and the minimum point is unique .\end{lemma}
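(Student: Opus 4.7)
The plan is to transport the problem from $V^\varphi+\omega\subset\Fb^2$ to a translate of $\Fb$ in $\Kb$ using the isomorphism $\rho^{-1}$ of Diagram~\eqref{commdiag}, exploit the multiplicativity~\eqref{Normnatural} of $N_\Kb$ to reduce to a simpler one-variable-per-coordinate minimization, and then read off the minimum directly. The decisive algebraic input will come from the CM identity $\eta+\bar\eta\in F$ of Remark~\ref{CMconj}, which is what forces the minimizer to be rational.

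Concretely, set $\xi=\varphi+\eta\in K^*$ when $\varphi\in F$ and $\xi=1$ when $\varphi=\infty$, and put $\mu=\omega^{(1)}+\eta\omega^{(2)}\in K$. A short check based on~\eqref{etadecomp} shows $\rho^{-1}(V^\varphi+\omega)=\xi\Fb+\mu$, so by~\eqref{normdecomp} minimizing $N_*$ on $V^\varphi+\omega$ is the same as minimizing $N_\Kb$ on $\xi\Fb+\mu$. Using~\eqref{Normnatural}, $N_\Kb(\xi b+\mu)=|N_K(\xi)|\cdot N_\Kb(b+\nu)$ with $\nu=\mu/\xi\in K$, so the task reduces to minimizing $N_\Kb(b+\nu)$ for $b\in\Fb$. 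Writing $\nu=\nu_1+\eta\nu_2$ in $F\oplus\eta F$ and substituting $c=b+\nu_1$, one is left to minimize $N_\Kb(c+\eta\nu_2)=\prod_{i=1}^s|c_i+\eta_i\tau_i(\nu_2)|^2$ over $c\in\Fb$.

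Each factor is a non-negative quadratic polynomial in the single real variable $c_i$ whose unique minimum $(\im\eta_i)^2\tau_i(\nu_2)^2$ is attained at $c_i^*=-(\re\eta_i)\tau_i(\nu_2)$; when $\nu_2\ne 0$ all these per-factor minima are strictly positive, so the product is uniquely minimized at $c^*=(c_i^*)_i$. The main step---and the only nontrivial obstacle I see---is verifying that $c^*$ in fact lies in $F\subset\Fb$. This will follow from applying $\sigma_i$ to $\eta+\bar\eta\in F$ to obtain $\re\eta_i=\tau_i\big(\frac{\eta+\bar\eta}{2}\big)$, whence $c^*=-\frac{1}{2}(\eta+\bar\eta)\nu_2\in F$; consequently $b^*=c^*-\nu_1\in F$ furnishes a rational minimizer in $V^\varphi+\omega$.

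It remains to analyze positivity. The minimum value equals $\prod_i(\im\eta_i)^2\cdot N_F(\nu_2)^2$, and the prefactor is positive since $\eta\notin F$ forces $\eta\ne\bar\eta$, giving $\prod_i(\im\eta_i)^2=4^{-s}|N_K(\eta-\bar\eta)|>0$. Hence the minimum vanishes iff $\nu_2=0$; a short two-case check ($\varphi\in F$ versus $\varphi=\infty$) then gives the chain $\nu_2=0\Leftrightarrow\nu\in F\Leftrightarrow\mu\in\xi F\Leftrightarrow\omega\in V^\varphi\Leftrightarrow 0\in V^\varphi+\omega$, which is exactly the degenerate case excluded in the statement. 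Uniqueness in the non-degenerate case was already obtained above, since $\nu_2\ne 0$ combined with $F$ being a field forces $\tau_i(\nu_2)\ne 0$ for every $i$, so that each factor is strictly positive at its unique minimum.
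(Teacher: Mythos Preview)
Your proof is correct and, while it ultimately lands on the same per-coordinate quadratic minimization as the paper, it organizes the computation differently. The paper stays in $\Fb^2$: it normalizes $\omega$ to $(\beta,0)$, parametrizes $V^\varphi+\omega$ by $y^{(2)}$, expands $N_*$ as $\prod_i f_i(y_i^{(2)})$ for explicit quadratics $f_i$, and then identifies the minimizer $\xi\in F$ by writing $\re\eta_i$ and $|\eta_i|^2$ as $\tau_i$-images of $\tfrac12\Tr_{K/F}(\eta)$ and $N_{K/F}(\eta)$. You instead pull everything back to $\Kb$ via $\rho^{-1}$, observe that $\rho^{-1}(V^\varphi+\omega)$ is a $K^*$-translate $\xi\Fb+\mu$ of an $\Fb$-coset, and use the multiplicativity~\eqref{Normnatural} to strip off $|N_K(\xi)|$ before doing the same coordinate-wise minimization. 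Your route is a bit cleaner conceptually (the two cases $\varphi\in F$ and $\varphi=\infty$ are handled by a single choice of $\xi$, and you avoid the somewhat heavy explicit formulas for $f_i$ and its minimum), while the paper's route has the virtue of producing the minimizer and minimum value as completely explicit closed-form expressions in $\varphi$, $\beta$, $\Tr_{K/F}(\eta)$, $N_{K/F}(\eta)$.

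Two small points of precision: the minimum value you display as $\prod_i(\im\eta_i)^2\cdot N_F(\nu_2)^2$ is the minimum of $N_\Kb(c+\eta\nu_2)$, not of $N_*$ on $V^\varphi+\omega$; the latter carries an extra factor $|N_K(\xi)|>0$, which of course does not affect the positivity discussion. And the phrase ``since $F$ is a field forces $\tau_i(\nu_2)\ne 0$'' is really using that each $\tau_i$ is an injective ring homomorphism.
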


\begin{proof} Suppose first $\varphi\in F$ and $\omega=(\omega^{(1)},\omega^{(2)})\in F^2$. Then one can replace $\omega$ by $(\omega^{(1)}-\varphi\omega^{(2)},0)$ without changing $V^\varphi+\omega$. So we may assume without loss of generality  $\omega=(\beta,0)$ where $\beta\in F$.

In this case $V^\varphi+\omega$ can be identified with
\begin{equation}\label{affsubcoord}\{(y^{(1)},y^{(2)})\in\Fb^2: y^{(1)}=\varphi y^{(2)}+\beta\}.\end{equation}
In particular, for $y=(y^{(1)},y^{(2)})\in V^\varphi+w$, $y$ is uniquely determined by $y^{(2)}$ and
\begin{equation}\label{normfactor}\begin{split}N_*(y)=&\prod_{i=1}^s\left(\big((\varphi_i+\re{\eta_i})y^{(2)}_i+\beta_i\big)^2+(\im{\eta_i})^2(y^{(2)}_i)^2\right)\\
=&\prod_{i=1}^sf_i(y_i^{(2)}),\end{split}\end{equation}
with
\begin{equation}f_i(\theta)=(\varphi_i^2+2\varphi_i\re{\eta_i}+|\eta_i|^2)\theta^2+2(\varphi_i+\re{\eta_i})\beta_i\theta+\beta_i^2\end{equation}

It is clear that $f_i(\theta)\geq 0$ and has a minimum achieved at the unique point $-\dfrac{\big(\varphi_i+\re{\eta_i}\big)\beta}{\varphi_i^2+2\varphi_i\re{\eta_i}+|\eta_i|^2}$.

By Remark \ref{CMconj}, \begin{equation}\begin{split}
\re\eta_i=&\frac{\sigma_i(\eta)+\overline{\sigma_i(\eta)}}2=\frac{\sigma_i(\eta)+\sigma_i(\bar\eta)}2=\frac{\sigma_i(\eta+\bar\eta)}2\\
=&\frac{\sigma_i\big(\Tr_{K/F}(\eta)\big)}2=\frac{\tau_i\big(\Tr_{K/F}(\eta)\big)}2\\
=&\left(\frac{\Tr_{K/F}(\eta)}2\right)_i.\end{split}\end{equation}
And for similar reasons, $|\eta_i|^2=\big(N_{K/F}(\eta)\big)_i$.

So for the element $\xi=-\dfrac{\big(\varphi+\frac12\Tr_{K/F}(\eta)\big)}{\varphi^2+\varphi\Tr_{K/F}(\eta)+N_{K/F}(\eta)}\in F$ and each $i\in I$, $\xi_i$ is the unique point at which $f_i$ achieves its minimum. The minimum can be easily verified to be
\begin{equation}\begin{split}f_i(\xi_i)=&\left(\frac{\big(N_{K/F}(\eta)-\frac14\Tr_{K/F}^2(\eta)\big)\beta^2}{\varphi^2+\varphi\Tr_{K/F}(\eta)+N_{K/F}(\eta)}\right)_i\\
=&\frac{(\im{\eta_i})^2\beta_i^2}{\varphi_i^2+2\varphi\re{\eta_i}+|\eta_i|^2}.\end{split}\end{equation}

As $\beta\in F$ is rational, $\beta_i=0$ if and only if $\beta=0$. Moreover for any $i\in I$, because $\im{\eta_i}\neq 0$, $f_i(\xi_i)=0$ if and only if $\beta=0$.

It follows that the restriction of $N_*$ to $V^\varphi+\omega$ has a minimum point at the point $y=(\varphi\xi+\beta,\xi)$.  Moreover, if $\beta\neq0$, then the minimum values $f_i(\xi_i)$ are all positive; and thus $N_*(y)>0$ and the minimum point $y$ is unique. Otherwise, $\beta=0$ and $V^\varphi+\omega$ contains $0$ by (\ref{affsubcoord}). This finishes the proof for $\varphi\in F$.

It remains to check what happens when the slope $\varphi$ is $\infty$, in which case $V^\infty+\omega=\{(y^{(1)},y^{(2)})\in\Fb^2: y^{(2)}=\omega^{(2)}\}$ and $y$ is uniquely determined by $y^{(1)}$. Denote $\beta=\omega^{(2)}\in F$. In this case, $N_*(y)$ can be decomposed as $\prod_{i=1}^sf_i(y^{(1)})$, where $f_i$ is a new polynomial given by $f_i(\theta)=\theta^2+2(\re{\eta_i})\beta_i\theta+|\eta_i|^2\beta_i^2$. Similar analysis as in the $\varphi\in F$ case shows that $f_i$ has a unique minimum point $\xi_i$, which is the embedding of $\xi=-\Tr_{K/F}(\eta)\cdot\beta\in F$ into $\bR$ by $\tau_i$. And the minimum value is $f_i(\xi_i)=(\im\eta_i)^2\beta_i^2$. Again, if $\beta\neq 0$, then $f_i(\xi_i)>0$ for all $i$ and $y$ is the unique minimum point for the product form $N_*(y)$; otherwise $0\in V^\infty+\omega$.
\end{proof}

Next, we study the localized Euclidean spectrum on each individual $s$-dimensional homogeneous $G$-invariant subset.

For any subset $A$ of $\Kb/\cO_K$, write
\begin{equation}\label{KbGammaSpec}\Spec_{\Kb/\cO_K}(A)=\{m_{\Kb/\cO_K}(z):z\in A\}.\end{equation}

\begin{proposition}\label{subspec} Suppose $r\geq 2$ and $L\subset\Kb/\cO_K$ is an $s$-dimensional homogeneous $G$-invariant subset, then $\Spec_{\Kb/\cO_K}(L)$ is a subset of $\bQ$ and can be written as $\{\nu,\mu_1,\mu_2,\cdots\}$ where: \begin{enumerate}[(1)]
\item $\nu=0$ if and only if $0\in L$;
\item $\{y\in L:m_{\Kb/\cO_K}(y)=\nu\}$ is the union of the set of all irrational points in $L$ and a non-empty finite set of rational points;
\item $\{y\in L:m_{\Kb/\cO_K}(y)=\mu_n\}$ is a finite non-empty set of rational points for all $n\geq 1$;
\item $\mu_1>\mu_2>\cdots$ and $\lim_{n\rightarrow\infty}\mu_n=\nu$;
\end{enumerate}
\end{proposition}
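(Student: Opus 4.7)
The plan is to separate irrational and rational points of $L$: the first give a common value $\nu$ by a dense-orbit argument, and the second give a well-ordered sequence of values above $\nu$ by combining upper semicontinuity with an effective density statement from Berend's theorem. The main tools are Lemmas \ref{toralmin}, \ref{homofacts}, \ref{affsub}, \ref{normhp}, together with Proposition \ref{effBerend}.

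First, set $\nu := \min_{z\in L}m_{\Kb/\cO_K}(z)$, which exists by upper semicontinuity (Lemma \ref{toralmin}(2)) on the compact set $L$. For irrational $z\in L$, Lemma \ref{homofacts}(2) gives $\overline{G.z}=L$, so Lemma \ref{toralmin}(3) forces $m_{\Kb/\cO_K}(z)=\nu$. Feeding such a $z$ into Lemma \ref{affsub}, the set $\pi_{\cO_F^2}^{-1}\pi_\triangle^{-1}(G.z)$ is dense in $\bigsqcup_{\omega\in\Omega_L}(V^\varphi+\omega)$; continuity of $N_*$ and Lemma \ref{normhp} then give $\nu=\min_{\omega\in\Omega_L}N_*(y_\omega)$ where $y_\omega\in V^\varphi+\omega$ is the rational minimizer supplied by Lemma \ref{normhp}, so $\nu\in\bQ_{\geq 0}$. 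For claim (1), $\nu=0$ holds iff some $N_*(y_\omega)=0$, iff $0\in V^\varphi+\omega$ for some $\omega\in\Omega_L$ (from the $\beta=0$ analysis in Lemma \ref{normhp}), iff $0\in L$.

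For (2), let $\Omega_L^\star:=\{\omega\in\Omega_L:N_*(y_\omega)=\nu\}$. Each image $\pi_\triangle(\pi_{\cO_F^2}(y_\omega))$ for $\omega\in\Omega_L^\star$ is a rational point of $L$ achieving $\nu$, so the rational fibre over $\nu$ is non-empty. Conversely, if a rational $z\in L$ achieves $\nu$, then the infimum in Lemma \ref{affsub} is attained as $\nu$ for some $\omega\in\Omega_L^\star$; when $\nu>0$ the minimizer $y_\omega$ is unique (Lemma \ref{normhp}, since no $V^\varphi+\omega$ passes through $0$ in this case), forcing $z$ into the finite orbit $G.\pi_\triangle(\pi_{\cO_F^2}(y_\omega))$; when $\nu=0$, Lemma \ref{toralmin}(4) provides $x\in\pi_{\cO_K}^{-1}(z)\cap K$ with $|N_K(x)|=0$, hence $x=0$ and $z=0$. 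Taking the union over the finite set $\Omega_L^\star$ yields finitely many rational $z$.

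For (3)--(4), the remaining values of $m_{\Kb/\cO_K}$ on $L$ are realised only on rational points, hence rational (Lemma \ref{toralmin}(4)) and bounded by $M(K)$. The key claim is that for every $\epsilon>0$ the set $A_\epsilon:=\{z\in L:m_{\Kb/\cO_K}(z)\geq\nu+\epsilon\}$ is finite. The set $A_\epsilon$ is closed (Lemma \ref{toralmin}(2)), $G$-invariant (Lemma \ref{toralmin}(1)), and contains no irrational point. To bound it, I plan to apply Proposition \ref{effBerend} to $F$ (non-CM of unit rank $r\geq 2$) and its restriction to the finite-index subgroup $G\subset U_F$: torsion points of $\Fb/\cO_F$ of order $\geq q$ have $G$-orbits that are $\delta(q)$-dense, with $\delta(q)\to 0$. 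Transferring through the decomposition $L=\bigsqcup_k(T^\varphi+a_k)$ with $T^\varphi\cong\Fb/\cO_F$, and noting that $G$ permutes the translates and that on each stabilised translate $G$ acts again as a totally irreducible Cartan group (Lemma \ref{UKCartan}), one concludes that any rational $z\in L$ of sufficiently high order has a $G$-orbit that is $\delta$-dense in $L$. Lifting through $\pi_\triangle$ and $\pi_{\cO_F^2}$, and using the uniform separation of distinct slices $V^\varphi+\omega$ for $\omega\in\Omega_L$, one finds a point $y\in\pi_{\cO_F^2}^{-1}\pi_\triangle^{-1}(G.z)\cap(V^\varphi+\omega)$ within distance $O(\delta)$ of $y_\omega$ for each $\omega\in\Omega_L^\star$. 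Continuity of $N_*$ and Lemma \ref{affsub} then give $m_{\Kb/\cO_K}(z)\leq \nu+O(\delta)<\nu+\epsilon$, contradicting $z\in A_\epsilon$. Hence $A_\epsilon$ contains only rational points of bounded order, which are finitely many. Enumerating $\Spec_{\Kb/\cO_K}(L)\cap(\nu,\infty)$ in decreasing order yields (3) and (4), with each $\mu_n$-fibre finite by the same argument applied to $\epsilon=\mu_n-\nu$.

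The main obstacle is the transfer step in the last paragraph: one has to carefully track, through the finite-to-one covers $\pi_\triangle$ and $\pi_{\cO_F^2}$, how the quantitative $\delta(q)$-density of a high-order $G$-orbit inside $\Fb/\cO_F$ produces points of $\pi_{\cO_F^2}^{-1}\pi_\triangle^{-1}(G.z)$ lying in the correct affine slice $V^\varphi+\omega$ and near the prescribed minimizer $y_\omega$, while handling the $G$-permutation of the translates $T^\varphi+a_k$ and of the finite index set $\Omega_L$. This bookkeeping is routine but essential for converting a density statement in a torus into an approximation statement on the $N_*$-level sets.
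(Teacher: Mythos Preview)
Your treatment of parts (1) and (2) is essentially the paper's argument: define $\nu$ via Lemma~\ref{affsub} and the minimizers $y_\omega$ from Lemma~\ref{normhp}, use Lemma~\ref{homofacts}(2) for irrational points, and pin down the rational fibre over $\nu$ via uniqueness of $y_\omega$ when $\nu>0$.

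For parts (3)--(4) you diverge substantially, and here there is both an unnecessary detour and a genuine gap. The paper obtains finiteness of $A_\epsilon=L_{\geq\nu+\epsilon}$ in one line from Lemma~\ref{homofacts}(3): $A_\epsilon$ is closed, $G$-invariant, and (by part (2)) contains only rational points; if it were infinite it would contain an $s$-dimensional homogeneous $G$-invariant subset, which necessarily has irrational points --- contradiction. No effective input is needed, and Proposition~\ref{effBerend} plays no role in this section of the paper.

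Your alternative via Proposition~\ref{effBerend} is not merely longer; the step you call ``routine bookkeeping'' contains an error. You assert that on each stabilised translate $T^\varphi+a_k$ the subgroup $G_k\subset G$ ``acts again as a totally irreducible Cartan group (Lemma~\ref{UKCartan})''. This is false: identifying $T^\varphi+a_k$ with $T^\varphi$ by subtracting $a_k$, the conjugated action of $g\in G_k$ is $t\mapsto g\cdot t + (g\cdot a_k - a_k)$, an \emph{affine} map, not a toral automorphism. Neither Lemma~\ref{UKCartan} nor Proposition~\ref{effBerend} applies to such affine actions, so you cannot directly conclude that high-order torsion points in $T^\varphi+a_k$ have $\delta$-dense $G_k$-orbits there. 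One can repair this (for instance by passing to a finite cover that kills the torsion translations $g\cdot a_k-a_k$, or by applying Proposition~\ref{effBerend} to the linear $G$-action on the subtorus $T^\varphi\cong\Fb/\Gamma'$ and then carefully tracking cosets), but this is real work, not bookkeeping. Given that Lemma~\ref{homofacts}(3) already sits in your toolbox and disposes of the matter immediately, you should use it instead.
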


In particular, the proposition implies that every value from $\Spec_{\Kb/\cO_K}(L)$ is achieved by at least one, but finitely many, rational point from $L$.

\begin{proof} {\noindent\bf (1)} Denote by $\varphi$ the slope of $L$. Lemma \ref{affsub} implies that $\inf_{z\in L}m_{\Kb/\cO_K}(z)$ equals $\min_{\omega\in\Omega_L}\inf\{N_*(y):y\in V^\varphi+\omega\}$ where $\Omega_L\subset F^2$ is a finite set of rational point decided by $L$. By Lemma \ref{normhp}, for each $\omega\in\Omega_L$, there is $y_\omega\in V^\varphi+\omega$ such that $N_*(y_\omega)=\min\{N_*(y):y\in V^\varphi+\omega\}$. Hence $\inf_{z\in L}m_{\Kb/\cO_K}(z)=\min_{\omega\in\Omega_L}N_*(y_\omega)$. Denote this minimum by $\nu$.

There is at least one of the $y_\omega$'s, which we denote by $y_\nu$, such that $N_*(y_\omega)=\nu$. Then $z_\nu=\pi_\triangle\big(\pi_{\cO_F^2}(y_\nu)\big)\in L$ by Lemma \ref{affsub}, and thus $m_{\Kb/\cO_K}(z_\nu)\leq\nu$ by definition and in consequence $m_{\Kb/\cO_K}(z_\nu)=\nu$. Note $z_\nu$ is rational, hence $\nu= m_{\Kb/\cO_K}(z_\nu)\in\bQ$ by Lemma \ref{toralmin}.(4).

By Lemma \ref{normhp}, $\nu=0$ if and only if $y_\nu=0$, or equivalently $z_\nu=0$. On the other hand if $L$ passes through $0$ then clearly $m_{\Kb/\cO_K}(0)=0$ is the minimum of $m_{\Kb/\cO_K}$ on $L$. This proves claim (1).

{\noindent\bf (2)} Assume $z\in L$ is irrational, then by Lemma \ref{homofacts}.(2), $G.z$ is dense in $L$. It follows from Lemma \ref{toralmin}.(3) that $m_{\Kb/\cO_K}(z)=\nu$.

When $\nu=0$, the only rational point $z\in L$ with $m_{\Kb/\cO_K}(z)=0$ is $0$. Assuming $\nu>0$, we try to show that all rational points $z\in L$ such that $m_{\Kb/\cO_K}(z)=\nu$ are contained in a fixed finite set.

Let $z$ be such a point. By Lemma \ref{affsub}, there exists $\omega\in\Omega_L$ such that \begin{equation}\label{subspecinf}\inf\big\{N_*(y):y\in\pi_{\cO_F^2}^{-1}\big(\pi_\triangle^{-1}(G.z)\big)\cap(V^\varphi+\omega)\big\}=\nu.\end{equation} Note for all $y\in\pi_{\cO_F^2}^{-1}\big(\pi_\triangle^{-1}(G.z)\big)\cap(V^\varphi+\omega)$,  $\pi_{\cO_K}\big(\rho^{-1}(y)\big)=z$ and hence $N_*(y)=N_\Kb\big(\rho^{-1}(y)\big)$ takes values from a discrete set of rational numbers as we have seen in the proof of Lemma \ref{toralmin}.(4). Therefore, the infimum is actually a minimum. In other words, there is $y\in \pi_{\cO_F^2}^{-1}\big(\pi_\triangle^{-1}(G.z)\big)$ such that the infimum in (\ref{subspecinf}), which equals $\nu$, is attained at $y$. Since $\nu>0$, by Lemma \ref{normhp}, $V^\varphi+\omega$ doesn't contain $0$ and $y$ must be $y_\omega$. Thus $z\in G.\pi_\triangle\big(\pi_{\cO_F^2}(y_\omega)\big)$. Because $y_\omega$ is rational, this is a finite $G$-orbit. So the finite set $\bigcup_{\omega\in\Omega_L}G.\pi_\triangle\big(\pi_{\cO_F^2}(y_\omega)\big)$ covers all rational points $z\in L$ at which $m_{\Kb/\cO_K}$ equals $\nu$. This establishes Part (2).

{\noindent\bf (3)} There are infinitely many rational points in $L$. So it follows from the finiteness proved above that $\Spec_{\Kb/\cO_K}$ contains values other than $\nu$.

Therefore by upper semicontinuity, $L_{\geq \nu+\delta}=\{z\in L: m_{\Kb/\cO_K}(z)\geq\nu+\delta\}$ is a proper non-empty closed subset of $L$ for all sufficiently small positive $\delta$. Moreover, it is $G$-invariant by Lemma \ref{toralmin}.(1). By the remark at the beginning of part (2) above, $L_{\geq\nu+\delta}$ consists of rational points. Moreover, $L_{\geq\nu+\delta}$ is finite. Actually, suppose $L_{\geq\nu+\delta}$ is infinite then it contains an $s$-dimensional homogeneous $G$-invariant subset $L'$ by Lemma \ref{homofacts}.(3). However $L'$ must contain irrational points, which contradicts the rationality of points from $L_{\geq\nu+\delta}$. Hence we conclude that $L_{\geq \nu+\delta}$ is non-empty finite subset of rational points in $L$ for tiny $\delta$.

For any $\mu\in \Spec_{\Kb/\cO_K}\backslash\{\nu\}$, we know $\mu>\nu$ and denote by $L_{=\mu}$ the set $\{z\in L: m_{\Kb/\cO_K}(z)=\mu\}$. Then $L_{=\mu}$ is a subset of $L_{\geq\nu+\delta}$ for $\delta\in(0,\mu-\nu)$ and in consequence consists of a finite number of rational points.

{\noindent\bf (4)} Observe that the collection of rational points in $L$, which is infinite, is the union of $\{z\in L: z\text{ is rational, }m_{\Kb/\cO_K}(z)=\nu\}$ and all the $L_{=\mu}$'s where $\mu\in \Spec_{\Kb/\cO_K}\backslash\{\nu\}$. We have already seen that each of these sets is finite, therefore $\Spec_{\Kb/\cO_K}\backslash\{\nu\}$ must be infinite.

Fix $z_\mu\in L_{=\mu}$, then $\mu=m_{\Kb/\cO_K}(z_\mu)\in\bQ$ by Lemma \ref{toralmin}.(4). As it was already verified that $\nu\in\bQ$, this asserts that \begin{equation}\Spec_{\Kb/\cO_K}(L)\subset\bQ.\end{equation}

Furthermore, the spectrum has no accumulation point greater than $\nu$. Otherwise, for sufficiently small $\delta$, there are infinitely many values in $\Spec_{\Kb/\cO_K}(L)\cap[\nu+\delta,\infty)$. Since each of these values correspond to at least one point in $L_{\geq\nu+\delta}$, it follows that $L_{\geq\nu+\delta}$ is infinite, which contradicts the previous conclusion.

In addition, recall that $m_{\Kb/\cO_K}(z)\leq M(\Kb)\leq 2^{-d}D_K$ by \ref{EMupper}. So $\Spec_{\Kb/\cO_K}(L)\backslash\{\nu\}$ is a bounded infinite subset of $\bQ\cap(\nu,\infty)$ and has no accumulation point other than $\nu$. The only possibility is a decreasing sequence approaching $\nu$, which is Part (4) of the lemma.\end{proof}

\subsection{Proof of main results}

We are now able to establish a complete characterization of the Euclidean and inhomogeneous spectra of $K$ in case that $r\geq 3$ by putting pieces together.

\begin{theorem}\label{describespec} Suppose $K$ is a CM number field of unit rank $3$ or higher, then the inhomogeneous and Euclidean spectra coincide: $\Spec(\Kb)=\Spec(K)$. Moreover, $\Spec(K)$ is a countable subset of $\bQ$ and can be decomposed as $\{0\}\sqcup\left(\bigsqcup_{n=1}^\infty S_n\right)$, where:

\begin{enumerate}[(1)]
\item For all $n\geq 1$, $S_n$ can be written as $\{\nu_n,\mu_{n,1},\mu_{n,2},\cdots\}$ such that:\begin{itemize}
\item $\mu_{n,1}>\mu_{n,2}>\cdots$,
\item $\lim_{k\rightarrow\infty}\mu_{n,k}=\nu_n$,
\item $\lim_{n\rightarrow\infty}\nu_n=0$,
\item $\nu_n>\mu_{n+1,1}$;
\end{itemize}

\item For each $\mu_{n,k}$, $\{z\in\Kb/\cO_K: m_{\Kb/\cO_K}(z)=\mu_{n,k}\}$ is a finite subset of rational points.

For each $\nu_n$, $\{z\in K/\cO_K: m_{\Kb/\cO_K}(z)=\nu_n\}$ is finite and non-empty. And the set  $\{z\in\Kb/\cO_K: z\text{ is irrational, }m_{\Kb/\cO_K}(z)=\nu_n\}$ is the set of all irrational points in a certain finite union of $s$-dimensional affine subtori where $s=\frac12 \deg K$.

\end{enumerate}
\end{theorem}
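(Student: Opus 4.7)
The plan is to combine the orbit-closure classification in Proposition~\ref{joining} with the localized spectrum analysis in Proposition~\ref{subspec}. Fix a finite-index subgroup $G\cong\bZ^r$ of $U_F\subset U_K$, and for each $s$-dimensional homogeneous $G$-invariant subset $L\subset\Kb/\cO_K$ write $\nu^L,\mu^L_1>\mu^L_2>\cdots$ for the values produced by Proposition~\ref{subspec}. The rough strategy is: (i) every value of $m_{\Kb/\cO_K}$ is realized by a rational point, so $\Spec(K)=\Spec(\Kb)$; (ii) the positive values $\{\nu^L\}_L$ form a sequence decreasing to $0$ and constitute the only accumulation points of $\Spec(K)\setminus\{0\}$; (iii) between consecutive such accumulation points the spectrum is a discrete decreasing sequence.

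For (i), any $z\in\Kb/\cO_K$ has $\overline{G.z}$ equal to either the whole torus (in which case $m_{\Kb/\cO_K}(z)=0$ by Lemma~\ref{toralmin}.(3) applied to $0\in\overline{G.z}$), a finite orbit of rational points (so $z\in K/\cO_K$), or an $s$-dimensional homogeneous $G$-invariant subset $L$ (in which case Lemma~\ref{toralmin}.(3) together with Proposition~\ref{subspec} yields $m_{\Kb/\cO_K}(z)=\nu^L$). Proposition~\ref{subspec}.(2) says each $\nu^L$ is already attained on a nonempty finite set of rational points of $L$, so $\Spec(\Kb)=\Spec(K)$; and Proposition~\ref{subspec} further gives $\Spec(K)\subseteq\bQ$ and
\begin{equation*}
\Spec(K)=\{0\}\cup\bigcup_L\bigl(\{\nu^L\}\cup\{\mu^L_k:k\geq 1\}\bigr).
\end{equation*}

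For (ii), the trivial inequality $|N_\Kb(x)|\leq\|x\|^d$ combined with Lemma~\ref{toralmin}.(3) gives $\|z\|^d\geq\nu^L$ for all $z\in L$, so $L$ avoids the open ball of radius $(\nu^L)^{1/d}$ around $0$ and in particular fails to be $(\nu^L)^{1/d}$-dense. Lemma~\ref{homofacts}.(4) then forces only finitely many $L$'s to satisfy $\nu^L\geq\delta$ for each fixed $\delta>0$. Enumerate the distinct positive values of $\{\nu^L\}$ in decreasing order as $\nu_1>\nu_2>\cdots$; then $\nu_n\to 0$. Applying Proposition~\ref{joining}.(2) with $\epsilon=\delta^{1/d}$, every rational point of $m$-value at least $\delta$ lies inside a finite union $\mathcal{U}_\delta$ of $s$-dimensional homogeneous $G$-invariant subsets, so
\begin{equation*}
\Spec(K)\cap[\delta,\infty)\subseteq\bigcup_{L\in\mathcal{U}_\delta}\bigl(\{\nu^L\}\cup\{\mu^L_k:k\geq 1\}\bigr).
\end{equation*}
The right-hand side is a finite union of decreasing sequences $\mu^L_k\to\nu^L$, whose only accumulation points in $[\delta,\infty)$ are the $\nu^L$'s; letting $\delta\to 0$ identifies $\{0\}\cup\{\nu_n\}$ as the full set of accumulation points of $\Spec(K)$.

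For (iii) and the remaining assertions, I define $S_n:=\Spec(K)\cap[\nu_n,\nu_{n-1})$ with $\nu_0:=\infty$. Step~(ii) shows $S_n\setminus\{\nu_n\}$ has $\nu_n$ as its only accumulation point and is bounded above by $\nu_{n-1}$; it is infinite because any single $L$ with $\nu^L=\nu_n$ already supplies a sequence $\mu^L_k\to\nu_n$ whose cofinitely many terms lie in $(\nu_n,\nu_{n-1})$. Enumerating $S_n\setminus\{\nu_n\}$ in decreasing order as $\mu_{n,1}>\mu_{n,2}>\cdots$ yields the structure in Part~(1), and $\nu_n>\mu_{n+1,1}$ is built into $S_{n+1}\subseteq[\nu_{n+1},\nu_n)$. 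For Part~(2), Proposition~\ref{subspec}.(3) bounds the rational points of each $L\in\mathcal{U}_{\mu_{n,k}}$ achieving the value $\mu_{n,k}$, and $\mathcal{U}_{\mu_{n,k}}$ itself is finite; since no irrational $z$ can satisfy $m_{\Kb/\cO_K}(z)=\mu_{n,k}$ (as irrational $z$ gives $m_{\Kb/\cO_K}(z)\in\{0\}\cup\{\nu^L\}$ and $\mu_{n,k}$ is none of these), the $\mu_{n,k}$-level set is finite and rational. Similarly, Proposition~\ref{subspec}.(2) controls the rational $\nu_n$-level set, while the irrational $\nu_n$-level set is exactly the set of irrational points of $\bigcup_{L:\nu^L=\nu_n}L$, which by Lemma~\ref{homofacts}.(1) is the set of irrational points of a finite union of affine $s$-dimensional subtori, as claimed. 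The main technical content is absorbed into Propositions~\ref{joining} and~\ref{subspec}; once these are granted, the only delicate point is the separation of accumulation clusters carried out in the last inclusion of step~(ii), and the rest is organized bookkeeping of the data they provide.
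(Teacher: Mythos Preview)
Your approach is essentially the same as the paper's: combine Proposition~\ref{joining} with Proposition~\ref{subspec}, identify the set of positive $\nu^L$'s as the nonzero accumulation points of the spectrum, and then fill in the intervals between consecutive $\nu_n$'s. The organization via the finite cover $\mathcal{U}_\delta$ from Proposition~\ref{joining}.(2) is the same device the paper uses in its Step~5, and your treatment of the level sets in Part~(2) matches the paper's.

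There is, however, one genuine gap. When you write ``Enumerate the distinct positive values of $\{\nu^L\}$ in decreasing order as $\nu_1>\nu_2>\cdots$; then $\nu_n\to 0$,'' you are assuming this sequence is infinite, and the theorem requires it (the decomposition is $\bigsqcup_{n=1}^\infty S_n$ with $\lim_n\nu_n=0$). Your step~(ii) only shows that for each $\delta>0$ finitely many $L$'s satisfy $\nu^L\geq\delta$; this gives discreteness of $\{\nu^L>0\}$ but not infinitude. You must separately exhibit infinitely many $s$-dimensional homogeneous $G$-invariant subsets $L$ that avoid $0$ (so that $\nu^L>0$); combined with the finiteness for each $\delta$, this forces infinitely many distinct values. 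The paper does this explicitly (its Step~3): among the subsets of slope $\infty$, each rational $\mathfrak{w}\in F/\cO_F$ determines one, and only the one with $\mathfrak{w}=0$ passes through the origin, so after projecting by the finite map $\pi_\triangle$ one still has infinitely many $L\in\mathcal{E}$. Without this step your decomposition $\{0\}\sqcup\bigsqcup_n S_n$ may fail to cover $\Spec(K)\cap(0,\nu_N)$ if the sequence of $\nu_n$'s were to terminate at some $\nu_N>0$.

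A minor remark: the displayed equality $\Spec(K)=\{0\}\cup\bigcup_L(\{\nu^L\}\cup\{\mu^L_k\})$ in step~(i) presumes every rational $z$ lies in some $L$; this is true (e.g.\ $z$ lies in a slope-$\infty$ subset through it) but is not a consequence of Proposition~\ref{subspec} alone. You do not actually need this identity later, since in step~(ii) you correctly obtain the needed containment on $[\delta,\infty)$ from Proposition~\ref{joining}.(2).
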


\begin{proof} {\noindent\bf Step 1.} Construct $\eta$, $\rho$, $F$, $\Gamma$ as in previous discussions. Then $s=\deg F\geq 4$ and we will be able to make use of Proposition \ref{joining}.

Let $\cE$ be the collection of all $s$-dimensional homogeneous $G$-invariant subsets $L\subset\Kb/\cO_K$ that avoid $0$. For each $L\in\cE$, denote $\nu_L=\min\Spec_{\Kb/\cO_K}(L)$, which exists by Proposition \ref{subspec}.

We classify all points $z\in\Kb/\cO_K$ into several categories:\begin{itemize}
\item[(Ia)] $z=0$;
\item[(Ib)] $z$ is irrational and is not contained in any $L\in\cE$;
\item[(IIa)] $z$ is irrational and belongs to some $L\in\cE$;
\item[(IIb)] $z$ is rational and there is $L\in\cE$, which may or may not contain $z$, such that $m_{\Kb/\cO_K}(z)=\nu_L$;
\item[(III)] $z$ is a non-zero rational point that doesn't fall into category (IIb).
\end{itemize}

These types obviously exhaust all points in $\Kb/\cO_K$.

First, if $z$ is of type (Ia) or (Ib), then $m_{\Kb/\cO_K}(z)=0$. This is obviously true if $z=0$. If $z$ is of type (Ib), then by Proposition \ref{joining}, $\overline{G.z}$ is either $\Kb/\cO_K$ or an $s$-dimensional homogeneous $G$-invariant subset that contains $0$. In both cases $0\in\overline{G.z}$ and by Lemma \ref{toralmin}, $m_{\Kb/\cO_K}(z)$ vanishes.

Second, if $z$ belongs to category (IIa) or (IIb) and is associated with $L\in\cE$, then $m_{\Kb/\cO_K}(z)=\nu_L$. For points of type (IIb) this is part of construction. If $z$ is irrational and $z\in L$, then this is a consequence of Lemma \ref{subspec}.(2) instead.

{\noindent\bf Step 2.} We show that for all $\delta>0$, there are only finitely many $L\in\cE$ such that $\nu_L>\delta$.

Actually, by Lemma \ref{toralmin}, for each of these $L$'s and $z\in L$, $\|z\|>\delta^\frac1d$. In other words, $L$ fails to be $\delta^\frac1d$-dense in $\Kb/\cO_K$, the claim follows from Lemma \ref{homofacts}.(4).

{\noindent\bf Step 3.} We claim $\cE$ is infinite. Actually, there are infinitely many $L\in\cE$ that have slope $\infty$.

Since in $\Kb/\cO_K$ the only $s$-dimensional homogeneous $G$-invariant subset with slope $\infty$ that contains $0$ is $\pi_\triangle(T^\infty)$. It is enough to show there are infinitely many $s$-dimensional homogeneous $G$-invariant subsets that have slope $\infty$.

Each rational point $\gow\in\Fb/\cO_F$ gives rise to a $s$-dimensional homogeneous $G$-invariant subset $\{(\goz^{(1)},\goz^{(2)}):\goz^{(2)}=\gow\}$ in $(\Fb/\cO_F)^2$, and the correspondence is one-to-one. Since $\Fb/\cO_F$ contains infinitely many rational points, there are infinitely many $s$-dimensional homogeneous $G$-invariant subsets of slope $\infty$ in $(\Fb/\cO_F)^2$, each of these projects to an $s$-dimensional homogeneous $G$-invariant subset of slope $\infty$ in $\Kb/\cO_F$ under $\pi_\triangle$. This establishes the claim, as $\pi_\triangle$ is a finite covering map.

{\noindent\bf Step 4.} The set $\cA=\{\nu_L:L\in\cE\}$ can be reordered into a strictly decreasing sequence of rational numbers $\nu_1>\nu_2>\cdots$ that converges to $0$.

To prove this it suffices to show $\cA$ is a bounded infinite set of positive rational numbers and has no accumulation point other than $0$.

The boundedness follows from that of $m_{\Kb/\cO_K}$. The positivity and rationality are confirmed by Proposition \ref{subspec}. Hence it suffices to show $\cA$ is infinite, and has no accumulation point other than $0$.

If $\cA$ is finite, by infinity of $\cE$ there must be an infinite family of $L\in\cE$ such that the corresponding $\nu_L$'s are the same number, say $\nu$.

On the other hand, if $\cA$ has a non-zero accumulation point, which must be positive as each $\nu_L$ is, then there is a sequence of $L_n$'s such that $\nu_{L_n}$'s are distinct and converge to a positive value $\nu$. Without loss of generality, one may assume $\nu_{L_n}>\frac\nu2>0$.

Thus in both cases, there are an infinity of different $L$'s from $\cE$ such that $\nu_L>\frac\nu2>0$, which is impossible by Step 2. This completes the proof of the claim.

{\noindent\bf Step 5.} Write $\cS_1=\Spec(\Kb)\cap(\nu_1,\infty)$ and $\cS_n=\Spec(\Kb)\cap(\nu_n,\nu_{n-1})$ for $n\geq 2$. One wants to show that each $\cS_n$ can be written as a decreasing sequence of rational numbers $\mu_{n,1}>\mu_{n,2}>\cdots$ that converges to $\nu_n$, and that $\{z\in\Kb/\cO_K:m_{\Kb/\cO_K}(z)=\mu\}$ is a finite set of rational points for all $\mu\in\cS_n$.

In order to show the first half of the claim, it suffices to show $\cS_n$ is a bounded infinite set of rational numbers and has no accumulation point other than $\nu_n$.

Boundedness is again easily guaranteed. By (\ref{KbSpec}) and Step 1, any $\mu\in\cS_n$ can be achieved by $m_{\Kb/\cO_K}$ only at points of type (III), which are rational. Hence by Lemma \ref{toralmin}.(4), $\cS_n\subset\bQ$.

Since $\nu_n\in\cA$, there is $L$ such that $\nu_n=\nu_L$. Lemma \ref{subspec} asserts that there is a decreasing sequence from $\Spec_{\Kb/\cO_K}(L)\subset\Spec(\Kb)$ whose limit is $\nu_n$. In particular, this implies the infiniteness of $\cS_n$.

So what remains to be done is to get a contradiction assuming that: either $\cS_n$ has an accumulation point $\nu'$ which doesn't equal $\nu_n$, or the set $\{z\in\Kb/\cO_K:m_{\Kb/\cO_K}(z)=\mu\}$, which we just showed consists of rational points, is infinite for some $\mu\in\cS_n$.

In the first case, $\nu'>\nu_n$ and there are a sequence of rational points $z_k\in K/\cO_K$ such that the $m_{\Kb/\cO_K}(z_k)$'s are all different and converge to $\nu'$.

In the second case, let $\nu'=\mu$. Then in both cases we have an infinite sequence of distinct rational points $z_k$ such that $\lim_{k\rightarrow\infty}m_{\Kb/\cO_K}(z_k)=\nu'$. In particular, we may assume $m_{\Kb/\cO_K}(z_k)>\frac{\nu'}2$ for all $k$.

By Lemma \ref{toralmin}.(4), the orbits $G.z_k$ don't meet the neighborhood of radius $(\frac{\nu'}2)^{\frac1d}$ of the origin, and hence Proposition \ref{joining}.(2) implies that the $z_k$'s are contained in a finite union of $s$-dimensional homogeneous $G$-invariant subsets. Without loss of generality, assume they are all in the same $s$-dimensional homogeneous $G$-invariant subset $L'$. By Proposition \ref{subspec}, when $z_k\in L'$ are all different, the only possible limit of $m_{\Kb/\cO_K}(z_k)$ as $k$ tends to $\infty$ is $\nu_{L'}$ and the $m_{\Kb/\cO_K}(z_k)$'s are all greater than or equal to $\nu_{L'}$. Thus $\nu_{L'}=\nu'>\nu_n>0$. On the other hand, $\nu'<\nu_{n-1}$ if $n\geq 2$. Moreover, by Proposition \ref{subspec}.(1), $0\notin L'$; in other words, $L'\in\cE$. Hence $\nu'=\nu_{L'}\in\cA$. But $\cA$ contains no value in $(\nu_n,\nu_{n-1})$ when $n\geq 2$, or in $(\nu_1,\infty)$. Therefore we obtain a contradiction and this completes Step 5.

{\noindent\bf Final Step.} Part (1) of the theorem results from (\ref{KbSpec}) and Steps 4 and 5. A corollary to it is that $\Spec(\Kb)\subset\bQ$ and is countable.

The first half of Part (2) was already proved in Step 5. We now prove the second half that involves the $\nu_n$'s.

By definition of $\cA$, each $\nu$ is equal to $\nu_L$ for at least one $L\in\cE$ and is therefore achieved by $m_{\Kb/\cO_K}$ at some rational point in $L$. Hence $Y_n=\{z\in K/\cO_K: m_{\Kb/\cO_K}(z)=\nu_n\}$ is non-empty. By Lemma \ref{toralmin}.(3), for all $z\in Y_n$, the $G$-orbit of $z$ avoids the neighborhood of radius $\nu_n^{\frac1d}$ around $0\in\Kb/\cO_K$. Hence we know from Proposition \ref{joining}.(2) that $Y_n$ is covered by a finite union of $s$-dimensional homogeneous $G$-invariant subsets. Moreover, by Proposition \ref{subspec}, $m_{\Kb/\cO_K}$ can take value $\nu_n$ at only finitely many rational points from any given $s$-dimensional homogeneous $G$-invariant subset. Thus $Y_n$ is finite.

Regarding the set $Y'_n=\{z\in\Kb/\cO_K: z\text{ irrational, }m_{\Kb/\cO_K}(z)=\nu_n\}$. By Step 1, it consists only of irrational points of type (IIa) and is the set of irrational points from all the $L\in\cE$ such that $\nu_L=\nu_n$. Because each $L$ is a finite union of $s$-dimensional affine subtori, it suffices to notice there are only finitely many such $L$'s, which follows from Step 2 by taking $\delta\in(0,\nu_n)$. This completes the proof of Part (2).

Last, Part (2) of the theorem confirms that any value from the inhomogeneous spectrum $\Spec(\Kb)$ can be achieved by $m_{\Kb/\cO_K}$ at some $z\in K/\cO_K$, and thus $\Spec(K)=\Spec(\Kb)$ by (\ref{KbSpec}) and(\ref{KSpec}).
\end{proof}

We are now at a position to prove Corollaries \ref{isolatta} and \ref{computable}.

\begin{proof}[Proof of Corollary \ref{isolatta}] The Euclidean minimum $M(K)$ is exactly the value $\mu_{1,1}$ from Theorem \ref{describespec}, therefore the desired isolatedness and finiteness follow directly from the theorem. On the other hand, to show $M(\Kb)$ is attained, it suffices to prove that for all $z\in\Kb/\cO_K$ such that $m_{\Kb/\cO_K}(z)=M(\Kb)$, there is a lift $x\in\pi_{\cO_K}^{-1}(z)$ such that $|N_K(x)|=M(\Kb)$. Because such a point $z$ must be rational by Theorem \ref{describespec},  Lemma \ref{toralmin}.(4) implies that $M(K)$ is attained in $\Spec(\Kb)$.
\end{proof}

\begin{proof}[Proof of Corollary \ref{computable}] For a detailed explanation of Cerri's algorithm, see \cite{CThesis}*{Chapter 3}. In Proposition 4.25 of that thesis, Cerri showed the algorithm stops in finite time for non-CM fields of unit rank at least $2$. However, the only facts he used were that $M(K)$ is isolated in $\Spec(\Kb)$ and that $\{z\in\Kb/\cO_K: m_{\Kb/\cO_K}(z)=M(K)\}$ is a finite set of rational points. Therefore thanks to Corollary \ref{isolatta}, the same proof is valid for CM fields of unit rank $3$ or higher.\end{proof}

{\noindent\bf Acknowledgments.} We are grateful to Elon Lindenstrauss for helpful and motivating discussions.

\begin{bibdiv}
\begin{biblist}
\bib{B06}{article}{
   author={Bayer Fluckiger, Eva},
   title={Upper bounds for Euclidean minima of algebraic number fields},
   journal={J. Number Theory},
   volume={121},
   date={2006},
   number={2},
   pages={305--323},
   issn={0022-314X},
   review={\MR{2274907 (2008a:11139)}},
}

\bib{BS52}{article}{
   author={Barnes, E. S.},
   author={Swinnerton-Dyer, H. P. F.},
   title={The inhomogeneous minima of binary quadratic forms. II},
   journal={Acta Math.},
   volume={88},
   date={1952},
   pages={279--316},
   issn={0001-5962},
   review={\MR{0054654 (14,956a)}},
}

\bib{B04}{article}{
   author={Belabas, Karim},
   title={Topics in computational algebraic number theory},
   journal={J. Th\'eor. Nombres Bordeaux},
   volume={16},
   date={2004},
   number={1},
   pages={19--63},
   issn={1246-7405},
   review={\MR{2145572 (2006a:11174)}},
}

\bib{B83}{article}{
   author={Berend, Daniel},
   title={Multi-invariant sets on tori},
   journal={Trans. Amer. Math. Soc.},
   volume={280},
   date={1983},
   number={2},
   pages={509--532},
   issn={0002-9947},
   review={\MR{716835 (85b:11064)}},
}

\bib{BLMV09}{article}{
   author={Bourgain, Jean},
   author={Lindenstrauss, Elon},
   author={Michel, Philippe},
   author={Venkatesh, Akshay},
   title={Some effective results for $\times a\times b$},
   journal={Ergodic Theory Dynam. Systems},
   volume={29},
   date={2009},
   number={6},
   pages={1705--1722},
   issn={0143-3857},
   review={\MR{2563089 (2011e:37022)}},
}

\bib{CL98}{article}{
   author={Cavallar, Stefania},
   author={Lemmermeyer, Franz},
   title={The Euclidean algorithm in cubic number fields},
   conference={
      title={Number theory},
      address={Eger},
      date={1996},
   },
   book={
      publisher={de Gruyter},
      place={Berlin},
   },
   date={1998},
   pages={123--146},
   review={\MR{1628838 (99e:11137)}},
}

\bib{CThesis}{article}{
   author={Cerri, Jean-Paul},
   title={Spectres euclidiens et inhomogènes des corps de nombres},
   journal={PhD Thesis, Universit\'{e} Nancy I},
   volume={},
   date={2005},
   pages={},

}

\bib{C06}{article}{
   author={Cerri, Jean-Paul},
   title={Inhomogeneous and Euclidean spectra of number fields with unit
   rank strictly greater than 1},
   journal={J. Reine Angew. Math.},
   volume={592},
   date={2006},
   pages={49--62},
}

\bib{C07}{article}{
   author={Cerri, Jean-Paul},
   title={Euclidean minima of totally real number fields: algorithmic
   determination},
   journal={Math. Comp.},
   volume={76},
   date={2007},
   number={259},
   pages={1547--1575 (electronic)},
   issn={0025-5718},
   review={\MR{2299788 (2008d:11143)}},
}

\bib{EL04}{article}{
   author={Einsiedler, Manfred},
   author={Lind, Douglas},
   title={Algebraic $\bZ^d$-actions on entropy rank one},
   journal={Trans. Amer. Math. Soc.},
   volume={356},
   date={2004},
   number={5},
   pages={1799--1831 (electronic)},
   issn={0002-9947},
   review={\MR{2031042 (2005a:37009)}},
}

\bib{FJ10}{article}{
   author={Fontein, Felix},
   author={Jacobson, Michael},
   title={Rigorous Computation of Fundamental Units in Algebraic Number Fields},
   journal={},
   volume={},
   date={2010},
   pages={preprint},
   issn={},
   review={},
}

\bib{GL87}{book}{
   author={Gruber, P. M.},
   author={Lekkerkerker, C. G.},
   title={Geometry of numbers},
   series={North-Holland Mathematical Library},
   volume={37},
   edition={2},
   publisher={North-Holland Publishing Co.},
   place={Amsterdam},
   date={1987},
   pages={xvi+732},
   isbn={0-444-70152-4},
   review={\MR{893813 (88j:11034)}},
}

\bib{HW79}{book}{
   author={Hardy, G. H.},
   author={Wright, E. M.},
   title={An introduction to the theory of numbers},
   edition={5},
   publisher={The Clarendon Press Oxford University Press},
   place={New York},
   date={1979},
   pages={xvi+426},
   isbn={0-19-853170-2},
   isbn={0-19-853171-0},
   review={\MR{568909 (81i:10002)}},
}

\bib{L95}{article}{
   author={Lemmermeyer, Franz},
   title={The Euclidean algorithm in algebraic number fields},
   journal={Exposition. Math.},
   volume={13},
   date={1995},
   number={5},
   pages={385--416},
   issn={0723-0869},
   review={\MR{1362867 (96i:11115)}},
}

\bib{LW10}{article}{
   author={Lindenstrauss, Elon},
   author={Wang, Zhiren},
   title={Topological self-joinings of Cartan actions by toral automorphisms},
   journal={},
   volume={},
   date={2010},
   pages={preprint},
   issn={},
   review={},
}

\bib{P75}{article}{
   author={Parry, Charles J.},
   title={Units of algebraic number fields},
   journal={J. Number Theory},
   volume={7},
   date={1975},
   number={4},
   pages={385--388},
}

\bib{S91}{article}{
   author={Sands, Jonathan W.},
   title={Generalization of a theorem of Siegel},
   journal={Acta Arith.},
   volume={58},
   date={1991},
   number={1},
   pages={47--57},
   issn={0065-1036},
   review={\MR{1111089 (92c:11134)}},
}

\bib{S95}{book}{
   author={Schmidt, Klaus},
   title={Dynamical systems of algebraic origin},
   series={Progress in Mathematics},
   volume={128},
   publisher={Birkh\"auser Verlag},
   place={Basel},
   date={1995},
   pages={xviii+310},
   isbn={3-7643-5174-8},
   review={\MR{1345152 (97c:28041)}},
}

\bib{V85}{book}{
   author={van der Linden, F. J.},
   title={Euclidean rings with two infinite primes},
   series={CWI Tract},
   volume={15},
   publisher={Stichting Mathematisch Centrum Centrum voor Wiskunde en
   Informatica},
   place={Amsterdam},
   date={1985},
   pages={vii+200},
   isbn={90-6196-286-2},
   review={\MR{787609 (87a:11107)}},
}		

\bib{V96}{article}{
   author={Voutier, Paul},
   title={An effective lower bound for the height of algebraic numbers},
   journal={Acta Arith.},
   volume={74},
   date={1996},
   number={1},
   pages={81--95},
   issn={0065-1036},
   review={\MR{1367580 (96j:11098)}},
}

\bib{W10}{article}{
   author={Wang, Zhiren},
   title={Quantitatitve density under higher rank abelian algebraic toral actions},
   journal={Int. Math. Res. Not.},
   volume={16},
   date={2011},
   pages={3744-3821},
   issn={},
}

\bib{W97}{book}{
   author={Washington, Lawrence C.},
   title={Introduction to cyclotomic fields},
   series={Graduate Texts in Mathematics},
   volume={83},
   edition={2},
   publisher={Springer-Verlag},
   place={New York},
   date={1997},
   pages={xiv+487},
   isbn={0-387-94762-0},
   review={\MR{1421575 (97h:11130)}},
}

\end{biblist}
\end{bibdiv}

\end{document}